\documentclass[12pt,psamsfonts]{amsart}
\usepackage[english]{babel}
\usepackage[utf8]{inputenc}
\usepackage{amsthm} 
\usepackage{amsmath}
\usepackage{amsfonts}
\usepackage{amssymb}
\usepackage{graphicx}
\usepackage{cite}
\usepackage{geometry}
\usepackage[colorlinks=true, linkcolor=cyan, citecolor=cyan, urlcolor=cyan]{hyperref}
\usepackage{hyperref}
\newtheorem{teorema}{Theorem}

\newtheorem{lema}{Lemma}

\newtheorem{Assumption}{Assumption}
\newtheorem{teoremaLetra}{Theorem}
 
\newcounter{remark}
\begin{document}
\title[BIFURCATION OF LIMIT CYCLES IN PIECEWISE HOLOMORPHIC SYSTEMS]{BIFURCATION OF LIMIT CYCLES IN PIECEWISE HOLOMORPHIC SYSTEMS SEPARATED BY TWO EXTERNAL CIRCLES}

\author{Carlos Vinícius das Neves Silva \and Paulo Ricardo da Silva}

\address{Departamento de Matem\'{a}tica -- Instituto de Bioci\^{e}ncias Letras
	e Ci\^{e}ncias Exatas, UNESP -- Univ Estadual Paulista, Rua C. Colombo, 2265,
	CEP 15054--000 S\~{a}o Jos\'{e} do Rio Preto, S\~{a}o Paulo, Brazil}
\email{carlos.vn.silva@unesp.br and paulo.r.silva@unesp.br}
\date{}

\subjclass[2010]{32A10, 34C20, 34A34, 34A36, 34C05.}

\keywords {Piecewise holomorphic systems, limit cycles, Melnikov function}
\date{}
\dedicatory{}
\maketitle

\begin{abstract}
This work investigates the number of limit cycles bifurcating from the periodic orbits of a linear center when it is perturbed by piecewise holomorphic systems with three zones separated by two external circles. We analyze how different geometric configurations of these zones influence the number of limit cycles. Moreover, we show the existence of at least sixteen limit cycles in fourth-degree piecewise polynomial holomorphic systems under certain configurations.
\end{abstract}
\section{Introduction}
One of the most studied problems in the qualitative theory of ordinary differential equations is Hilbert's $16$th problem \cite{refb15}. This problem seeks to determine the maximum number and arrangement of limit cycles that can exist in polynomial systems.

Originally, Hilbert's $16$th problem was proposed in the context of planar polynomial differential systems. More precisely, it asks for the maximum possible number of limit cycles that a polynomial differential system of degree $n$ can have, as well as the relative arrangement of these cycles in the plane. However, over the years, researchers have started to consider extensions of this problem to a broader class of systems, the so-called piecewise smooth systems \cite{refb12,refb13}. These systems consist of different vector fields defined in regions of the plane, with the boundaries between these regions given by curves.

Despite having been extensively investigated, both in the context of smooth systems and piecewise smooth systems, Hilbert's $16$th problem remains open. Over the years, different particular cases of this problem have been studied in the context of various classes of piecewise smooth systems, considering different geometries for the discontinuity set.

In the context of smooth systems, \cite{refb18} investigated the maximum number of limit cycles bifurcating from periodic orbits of a cubic isochronous center. In the context of piecewise smooth systems with two zones separated by a straight line, \cite{refb19} studied the limit cycles bifurcating from a linear center under perturbation by a piecewise linear system. For piecewise smooth systems with three zones separated by straight lines, \cite{refb7} obtained lower bounds for the number of limit cycles bifurcating from a periodic annulus in piecewise linear Hamiltonian systems, with a center in the central zone and saddles in the lateral zones.

When the discontinuity set is a circle, several recent works have investigated the number of limit cycles of piecewise smooth linear Hamiltonian systems. In \cite{refb20}, the authors studied piecewise linear systems formed by two linear centers separated by a circle, showing that such systems can have at most three limit cycles. More generally, when the discontinuity set is an arbitrary conic, \cite{refb21} determined the maximum number of crossing limit cycles, namely, cycles that cross the discontinuity curve at exactly two points, for systems formed by linear Hamiltonian saddles or linear centers.

For systems separated by two circles, \cite{refb11} studied linear centers separated by two concentric circles, considering limit cycles that cross the discontinuity set at four points, two on each circle. They showed that such systems admit at most three limit cycles and presented examples realizing one, two, and three cycles. In \cite{refb10}, the authors studied linear centers in the same configuration, but considered a distinct class of crossing cycles and obtained an upper bound of two limit cycles, with examples realizing this bound.

Another class of piecewise systems that has been investigated in the literature is that of piecewise holomorphic systems. Holomorphic systems $\dot{z}=f(z)$ have interesting properties: their phase portraits around zeros and poles are completely classified, and such systems are locally integrable \cite{refb2}. A notable fact about this class, however, is that they do not possess limit cycles. This restriction disappears when considering piecewise holomorphic systems: \cite{refb5} exhibit piecewise holomorphic systems separated by a straight line that possess limit cycles. For this same class of systems, \cite{refb6} obtained lower bounds for the number of limit cycles when the discontinuity set is a straight line, combining different approaches: the study of the zeros of averaging functions and a degenerate Andronov--Hopf bifurcation; and \cite{refb1} studied the bifurcation of limit cycles in two distinct periodic annuli, around two centers of the same perturbed piecewise holomorphic system.

When the discontinuity set is a circle, \cite{refb22} obtained, by means of second-order averaging theory, lower bounds for the number of limit cycles of piecewise holomorphic polynomial systems: for perturbations of degrees $n_{+},n_{-}$, they showed that the number of limit cycles is at least $\lfloor(3\max\{n_+,n_-\}+5)/2\rfloor$. For the case of two circles, \cite{refb9} studied the limit cycles bifurcating from a linear center perturbed by piecewise holomorphic systems with three zones separated by two tangent circles in two geometric configurations: external tangency, for which, perturbing by degree-$4$ polynomials, they obtained a lower bound of $5$ limit cycles, and internal tangency, in which, perturbing by degree-$3$ polynomials and depending on the family of periodic orbits considered (crossing only one of the circles, or both), they obtained lower bounds of $4$ and $8$ limit cycles, respectively.

In this work, we consider the study of piecewise holomorphic systems with three zones, of the form
\begin{eqnarray*}
    \dot{z}=f_{i}(z), \quad i=1,2,3,
\end{eqnarray*}
where $f_{i}$ are polynomials in the variable $z$. The three zones of the plane are delimited by \textbf{two external circles}, that is, two circles that do not intersect and such that neither is contained in the interior of the other. The main objective is to analyze the number of limit cycles that bifurcate from the linear center $\dot{z}=iz$ when it is subjected to perturbations by piecewise holomorphic systems separated by two external circles. In this way, this work extends the analysis of piecewise holomorphic systems separated by two circles to a geometric configuration that has not yet been considered in the literature, contributing to a more comprehensive understanding of how the geometry of the discontinuity set affects the bifurcation of limit cycles from a linear center.

\subsection{Our main results}
In this section, we state the main results of this paper. Table \ref{tab1} summarizes the three geometric configurations considered, together with the corresponding number of families of periodic orbits, the lower bounds obtained for each family, and the total number of limit cycles obtained in each configuration through degree-$4$ polynomial perturbations, considering $h_{1},h_{2},h_{3}$ as in (\ref{eq.it1})-(\ref{eq.it3}), with $n=4$.

\begin{table}[!htb]
\centering
\resizebox{\textwidth}{!}{%
\begin{tabular}{c l l c c}
\hline
Theorem & Configuration & Families & Bounds & Example (degree $4$) \\
\hline
A & equal radii, aligned & $1$ (crosses both) & $6$ & $6$ \\
B & different radii, aligned & $2$ (same circle) $+\,1$ (crosses both) & $5,5,10$ & $16$ $(3+3+10)$ \\
C & non-aligned & $2$ (one circle each) $+\,1$ (crosses both) & $5,5,10$ & $14$ $(2+10+2)$ \\
\hline
\end{tabular}%
}
\caption{Summary of the geometric configurations and lower bounds obtained in Theorems \ref{teoa}, \ref{teob} and \ref{teoc}. }
\label{tab1}
\end{table}

We begin with the analysis of the case of highest symmetry, in which the two discontinuity circles have identical radii and their centers are aligned with the linear center of the unperturbed system. Under such conditions, the unperturbed system presents a single annulus of periodic orbits, where the trajectories within this annulus intersect both regions of discontinuity.
\begin{teoremaLetra}\label{teoa}
  Consider the piecewise polynomial holomorphic system of the form
  \begin{eqnarray}\label{eq.it1}
\begin{cases}
\dot{z}=iz+\varepsilon h_{1}(z) \quad \mbox{if} \quad |z+2|<1,\\
\dot{z}=iz+\varepsilon h_{2}(z)\quad \mbox{if} \quad |z+2|>1 \quad \hbox{and }\quad |z-2|>1  \\
\dot{z}=iz+\varepsilon h_{3}(z) \quad \mbox{if} \quad |z-2|<1,
\end{cases}
\end{eqnarray}
where
\begin{eqnarray*}
    h_{1}(z)=\sum_{j=0}^{4}(a_{1_{j}}+ib_{1_{j}})z^{j}, h_{2}(z)=\sum_{j=0}^{4}(a_{2_{j}}+ib_{2_{j}})z^{j} \; \hbox{and} \; h_{3}(z)=\sum_{j=0}^{4}(a_{3_{j}}+ib_{3_{j}})z^{j},
\end{eqnarray*}
we obtain that:
\begin{enumerate}
    \item[a)]  The lower bound for the number of limit cycles bifurcating from the periodic solutions, for $\varepsilon$ sufficiently small is $6$.
    \item[b)] There exist parameters such that the system (\ref{eq.it1}) has $6$ limit cycles. 
\end{enumerate}
\end{teoremaLetra}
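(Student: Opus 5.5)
Write $z=x+iy$ and go to polar coordinates $x=r\cos\theta$, $y=r\sin\theta$. The unperturbed system $\dot z=iz$ has the circles $r=\rho$ as periodic orbits. The discontinuity circles $|z\pm 2|=1$ lie in the annulus $1\le r\le 3$. Hence only the orbits with $1<\rho<3$ cross both circles; each such orbit crosses each circle at two points. Orbits with $\rho<1$ or $\rho>3$ lie entirely in the middle zone, where the perturbation is holomorphic, so no limit cycles bifurcate there. The family to study is therefore the annulus $1<\rho<3$.

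For $\rho\in(1,3)$, the circle $r=\rho$ meets $|z-2|=1$ where $\rho^2-4\rho\cos\theta+4=1$, that is at $\theta=\pm\alpha(\rho)$ with $\cos\alpha=(\rho^2+3)/(4\rho)$. By the symmetry $z\mapsto -z$ it meets $|z+2|=1$ at $\theta=\pi\pm\alpha(\rho)$. Taking $\theta$ as the new time, the system becomes $dr/d\theta=\varepsilon F_k(\theta,r)+O(\varepsilon^2)$ in each sector. The first-order term is
$$F_k(\theta,r)=\operatorname{Re}\big(e^{-i\theta}h_k(re^{i\theta})\big),$$
the radial component of $h_k$ divided by the angular speed $1$. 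I would invoke the first-order averaging theorem for piecewise smooth systems with several zones. It applies here because the unperturbed flow crosses the circles transversally for $1<\rho<3$: the tangency points are only at $\rho=\sqrt3$ directions, and these need to be checked to be noncrossing. The averaged function is
$$M(\rho)=\int_{-\alpha}^{\alpha}F_3\,d\theta+\int_{\alpha}^{\pi-\alpha}F_2\,d\theta+\int_{\pi-\alpha}^{\pi+\alpha}F_1\,d\theta+\int_{\pi+\alpha}^{2\pi-\alpha}F_2\,d\theta .$$
Every simple zero of $M$ in $(1,3)$ gives a limit cycle.

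Writing $h_k=\sum_j(a_{k_j}+ib_{k_j})z^j$, we have
$$\operatorname{Re}\big(e^{-i\theta}(a+ib)r^je^{ij\theta}\big)=r^j\big(a\cos((j-1)\theta)-b\sin((j-1)\theta)\big).$$
The integrals over the symmetric intervals kill the sine terms in the $h_3$-sector. Those over $[\pi-\alpha,\pi+\alpha]$ give factors $(-1)^{j-1}$. The middle-zone integrals combine as the full-circle integral minus the two arcs. The full-circle contribution survives only for $j=1$ and gives $2\pi a_{2_1}\rho$. The result is
$$M(\rho)=2\pi a_{2_1}\rho+2\sum_{j=0}^{4}c_j\,\rho^j\,\frac{\sin((j-1)\alpha)}{j-1},$$
where $\sin((j-1)\alpha)/(j-1)$ is read as $\alpha$ for $j=1$, and $c_j=a_{3_j}+(-1)^{j-1}a_{1_j}-(1+(-1)^{j-1})a_{2_j}$. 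In terms of the parameters this is
$$M(\rho)=k_0\,\rho+k_1\,\rho\,\alpha(\rho)+\sum_{j\in\{0,2,3,4\}}k_j\,\rho^j\sin((j-1)\alpha(\rho)),$$
with independent coefficients. That gives $6$ functions: $\rho$, $\rho\alpha$, $\sin\alpha$, $\rho^2\sin\alpha$, $\rho^3\sin2\alpha$, $\rho^4\sin3\alpha$.

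Next, expand $\sin(m\alpha)$ using $\cos\alpha=(\rho^2+3)/(4\rho)$ and
$$\sin\alpha=\frac{\sqrt{16\rho^2-(\rho^2+3)^2}}{4\rho}=\frac{\sqrt{(\rho^2-1)(9-\rho^2)}}{4\rho}.$$
Then all the terms except $\rho$ and $\rho\alpha$ become $S(\rho)\,P(\rho)$, where $S=\sqrt{(\rho^2-1)(9-\rho^2)}$ and $P$ is an explicit rational or polynomial function. This gives a family $\{\rho,\ \rho\,\alpha(\rho),\ S\,p_1,\dots,S\,p_4\}$.

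The main obstacle is proving that these $6$ functions are linearly independent and in fact form an extended Chebyshev system with accuracy on $(1,3)$; checking a strict Chebyshev property with transcendental $\alpha$ is hard. I would instead use the standard sufficient argument: show linear independence by computing the Wronskian, or Taylor coefficients at a convenient point such as $\rho=2$, and check that the determinant is nonzero. Then, by the well-known lemma that for $n$ linearly independent functions there exist coefficients giving $n-1$ simple zeros, I would get $5$ zeros.

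To reach $6$ as claimed, there is a way to get one more free parameter. Note that $j=1$ contributes both $\rho$ (through $a_{2_1}$) and $\rho\alpha$ (through $c_1$). The constant and $j=0$ terms may then supply an extra independent function, $\rho^{0}\sin(-\alpha)$ plus the $b$-terms. If the $b$-sine terms survive, because the $h_1$ and $h_3$ arcs are not both symmetric in the right way, I would recount carefully; the stated count of $6$ zeros suggests $7$ independent functions.

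Finally, for part b), I would choose $6$ target radii in $(1,3)$ and solve the linear system for the coefficients so that $M$ vanishes at those radii. I would verify numerically that $M'\neq0$ at each of them, and conclude by the averaging theorem.
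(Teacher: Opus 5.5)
\paragraph{Where the proposal breaks.} Your reduction is right and matches the paper's. Your averaged function is, up to the nonvanishing factor $-1/\rho$, the paper's Melnikov function $M(r)=c_0r\sin\theta_r+c_1r^2+d_1r^2\theta_r+c_2r^3\sin\theta_r+c_3r^4\sin 2\theta_r+c_4r^5\sin 3\theta_r$. So the relevant family consists of exactly six functions with independently adjustable coefficients.

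The gap is in the counting. Lemma 1 (linear independence plus a constant-sign member) yields only $5$ simple zeros from six functions. Your hope for a seventh function cannot be realized. Your own computation shows that every $b$-term integrates to zero: the arcs $[-\alpha,\alpha]$ and $[\pi-\alpha,\pi+\alpha]$ are symmetric about $0$ and $\pi$, and the middle zone is the full circle minus those arcs. The $j=0$ term is already the function $\sin\alpha$ in your list.

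\paragraph{The missing idea for (a).} Six zeros come from six functions precisely because this family is \emph{not} a Chebyshev system. The paper orders it as $\{r\sin\theta_r,\,r^3\sin\theta_r,\,r^4\sin2\theta_r,\,r^5\sin3\theta_r,\,r^2,\,r^2\theta_r\}$. Equivalently, you can use your functions in the order $\sin\alpha,\ \rho^2\sin\alpha,\ \rho^3\sin2\alpha,\ \rho^4\sin3\alpha,\ \rho,\ \rho\alpha$, because multiplying every function by $\rho$ multiplies $W_k$ by $\rho^{k+1}$. The paper shows that $W_0,\dots,W_4$ do not vanish on $(1,3)$; they are built from $p_1=(r^2-1)(r^2-9)$ and a strictly positive polynomial $p_2$. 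The full Wronskian $W_5$ has exactly one zero there, and it is simple, at $r\approx1.90$. The Novaes--Torregrosa result on ECT-systems with accuracy one (the paper's Lemma 3) then gives an element of the span with $6$ simple zeros on a closed subinterval such as $[1.05,2.99]$.

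\paragraph{The same problem in (b).} With six basis functions, imposing six prescribed radii gives a square homogeneous linear system. Its collocation determinant $\det\big(f_j(\rho_i)\big)$ is generically nonzero, so generically only the trivial solution exists. You would instead need to prescribe five radii, which leaves a one-dimensional solution space, and then check numerically that the resulting function has a sixth simple zero. Alternatively, choose the sixth radius on the zero set of the collocation determinant. The paper simply exhibits explicit $h_1,h_2,h_3$ and verifies numerically the six zeros and that $M'$ does not vanish at them. A correct explicit example of this kind would by itself also establish the lower bound in (a).

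\paragraph{A minor correction.} The unperturbed orbits are tangent to the switching circles only at $\rho=1$ and $\rho=3$. At $\rho=\sqrt3$ they cross $|z\mp2|=1$ orthogonally, so transversality on $(1,3)$ needs no extra check.
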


 While in Theorem \ref{teoa} we consider circles with equal radii, in the next theorem we analyze the system when the radius of one of the circles is altered. This modification affects the structure of the periodic orbits: instead of a single annulus, the system now exhibits three annuli of periodic orbits. In this new configuration, one annulus consists of orbits that intersect both circles, while the other two annuli consist of orbits that intersect only one of the circles (the same circle for both annuli). This structural change in the discontinuity regions has implications for the number of limit cycles.

\begin{teoremaLetra}\label{teob}
Consider the piecewise polynomial holomorphic system of the form
      \begin{eqnarray}\label{eq.it2}
\begin{cases}
\dot{z}=iz+\varepsilon h_{1}(z) \quad \mbox{if} \quad |z+2|<1,\\
\dot{z}=iz+\varepsilon h_{2}(z)\quad \mbox{if} \quad |z+2|>1 \quad \hbox{and }\quad |z-2|>\sqrt{2}  \\
\dot{z}=iz+\varepsilon h_{3}(z) \quad \mbox{if} \quad |z-2|<\sqrt{2},
\end{cases}
\end{eqnarray}
where
\begin{eqnarray*}
    h_{1}(z)=\sum_{j=0}^{4}(a_{1_{j}}+ib_{1_{j}})z^{j}, h_{2}(z)=\sum_{j=0}^{4}(a_{2_{j}}+ib_{2_{j}})z^{j} \; \hbox{and} \; h_{3}(z)=\sum_{j=0}^{4}(a_{3_{j}}+ib_{3_{j}})z^{j},
\end{eqnarray*}
we obtain that:
\begin{enumerate}
    \item[a)]  The lower bound for the number of limit cycles bifurcating from the periodic solutions, for $\varepsilon$ sufficiently small, is $5$ for each of the two families of periodic orbits that cross only $|z-2|=\sqrt{2}$, and $10$ for the periodic orbits that cross both $|z+2|=1$ and $|z-2|=\sqrt{2}$.
    \item[b)] There exist parameters such that the system (\ref{eq.it2}) has $16$ limit cycles. 
\end{enumerate}
\end{teoremaLetra}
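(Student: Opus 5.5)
We use first-order averaging (the first Melnikov function) in polar coordinates. Writing $z=re^{i\theta}$, each zone of (\ref{eq.it2}) becomes
\[
\dot r=\varepsilon\,\mathrm{Re}\big(e^{-i\theta}h_k(re^{i\theta})\big),\qquad \dot\theta=1+\varepsilon\,\mathrm{Im}\big(e^{-i\theta}h_k(re^{i\theta})\big)/r .
\]
Since $\dot\theta=1+O(\varepsilon)>0$, every unperturbed circle $|z|=r$ crosses the discontinuity curves transversally and no sliding occurs. We therefore take $\theta$ as the new time and use the standard averaging theorem for piecewise smooth systems.

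\textbf{Geometry of the annuli.} The circle $|z|=r$ meets $|z+2|=1$ exactly when $1<r<3$, at the angles $\pm\alpha(r)$ around $\theta=\pi$, where
\[
\cos\alpha(r)=\frac{r^2+3}{4r}.
\]
It meets $|z-2|=\sqrt2$ exactly when $2-\sqrt2<r<2+\sqrt2$, at the angles $\pm\beta(r)$ around $\theta=0$, where
\[
\cos\beta(r)=\frac{r^2+2}{4r}.
\]
This gives the three annuli of the statement: $I_1=(2-\sqrt2,1)$ and $I_3=(3,2+\sqrt2)$, whose orbits cross only $|z-2|=\sqrt2$, and $I_2=(1,3)$, whose orbits cross both circles.

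\textbf{Averaged function.} On each annulus the averaged function $M(r)$ is a sum of integrals of $\sum_j r^j\big(a_{k_j}\cos((j-1)\theta)-b_{k_j}\sin((j-1)\theta)\big)$ over the arcs lying in each zone. Computing these integrals, $M$ becomes an explicit linear combination of the functions
\[
r^j\sin\big((j-1)\beta(r)\big)\ (j\neq1),\qquad r\,\beta(r),\qquad r^j,
\]
together with the analogous functions built from $\alpha(r)$ on $I_2$. The coefficients are linear combinations of the parameters $a_{k_j},b_{k_j}$.

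We then do three things.
\begin{itemize}
\item Rewrite each $\sin((j-1)\beta)$ through Chebyshev polynomials in $\cos\beta$ times $\sin\beta$. Since $\cos\beta$ and $\sin\beta$ are algebraic in $r$, $M$ on $I_1,I_3$ takes the form $P(r)+Q(r)\sqrt{\Delta(r)}/r^{m}+c\,r\beta(r)$, with polynomials $P,Q$ and $\Delta(r)=16r^2-(r^2+2)^2$.
\item Extract a generating set $f_0,\dots,f_{m-1}$ of functions on each annulus.
\item Check that the linear map from the parameters $(a_{k_j},b_{k_j})$ to the coefficient vector of $(f_0,\dots,f_{m-1})$ is surjective, so that the coefficients can be prescribed freely.
\end{itemize}

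\textbf{Counting argument.} The key fact is the standard one: if $f_0,\dots,f_{m-1}$ are linearly independent on an interval, then some linear combination has at least $m-1$ simple zeros there. So to reach the stated bounds we must exhibit $6$ independent functions on $I_1$, $6$ on $I_3$, and $11$ on $I_2$, giving $5$, $5$ and $10$ simple zeros respectively. On $I_2$ both angle functions $\alpha$ and $\beta$ appear, together with $r\alpha$, $r\beta$, the two radical families $\sqrt{16r^2-(r^2+3)^2}$ and $\sqrt{16r^2-(r^2+2)^2}$, and polynomial terms. We prove independence either by a nonvanishing Wronskian at one point, or by Puiseux expansions at an endpoint (for instance $r\to1^+$, where $\sqrt{16r^2-(r^2+3)^2}\sim C\sqrt{r-1}$), where distinct leading exponents separate the functions. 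Each simple zero $r_0$ of $M$ in an annulus then yields a hyperbolic limit cycle near $|z|=r_0$ for small $\varepsilon$.

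\textbf{Part b).} The three annuli share the same parameter vector, so the counts $5,5,10$ cannot in general be realized simultaneously. We fix one explicit choice of coefficients (rational, found numerically) and evaluate the three restrictions of $M$. We certify $3$ simple zeros in $I_1$, $10$ in $I_2$ and $3$ in $I_3$ by sign changes at rational sample points, with derivative signs checked if needed. This gives $16$ limit cycles.

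\textbf{Main obstacle.} The hardest step is the independence count on $I_2$, namely obtaining $11$ functions and not fewer. Some of the arc integrals could collapse: for example, the constant terms from the middle zone cancel against the zone-$1$ and zone-$3$ contributions. If a direct Wronskian is intractable, we will try the asymptotic-exponent argument at $r=1$ and $r=3$ first, falling back on a Wronskian evaluated with exact arithmetic at a rational point.
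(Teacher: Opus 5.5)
\textbf{Part (a): sound, and essentially the paper's argument.} On $|z|=r$ one has $\Re\big(i\overline{h}\,dz\big)=-r\,\Re\big(e^{-i\theta}h(re^{i\theta})\big)\,d\theta$, so your averaged function is exactly $-M(r)/r$ for the paper's Melnikov function. The correspondences are direct:
\begin{itemize}
\item your $\beta,\alpha$ are the paper's $\theta_r^1,\theta_r^2$;
\item your spans are its six functions on each outer annulus and its eleven $f_j$ on $(1,3)$;
\item the $b_{k_j}$ drop out by symmetry, and the $a_{k_j}$ make the coefficient maps surjective;
\item independence via a Wronskian at one point is what the paper does at $r=5/2$.
\end{itemize}
On the outer annuli the paper proves the stronger ECT property (Lemma \ref{lema2}), whereas you only use independence; that suffices for the bound $5$.

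Three small repairs are needed:
\begin{itemize}
\item Your ``standard fact'' needs analyticity, or a nonzero Wronskian at an interior point, which lets you place $m-1$ simple zeros nearby. It is false for merely smooth functions. The paper's Lemma \ref{lem1}(b) instead assumes a constant-sign member.
\item Transversality in the open annuli comes from non-tangency of $|z|=r$ with the switching circles, not from $\dot\theta>0$. For example, $|z|=1$ touches $|z+2|=1$ although $\dot\theta=1$.
\item As $r\to1^{+}$, all five $\alpha$-terms have the same leading exponent $1/2$. So the endpoint expansion only separates an analytic $\beta$-block from a $\sqrt{r-1}$-block. Inside each block you still need an argument, e.g.\ that $1$, the radical and the angle are linearly independent over $\mathbb{R}(r)$, or simply the Wronskian.
\end{itemize}

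\textbf{Part (b): this is the genuine gap.} You only announce that a coefficient vector ``found numerically'' will work. But the existence of one common parameter vector producing $3+10+3$ simple zeros is the whole content of (b), and nothing in your proposal guarantees it.

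It is far from automatic. Let $\gamma_r$ be the arc of $|z|=r$ inside $|z+2|=1$. Then the middle-annulus function is
\[
M(r)=m(r)+\Re\int_{\gamma_r}i\,\overline{(h_1-h_2)}\,dz,
\]
where $m$ is verbatim the outer-annulus function. Consequently:
\begin{itemize}
\item The same six $\beta$-coefficients must give $3$ zeros on $(2-\sqrt2,1)$ and $3$ on $(3,2+\sqrt2)$.
\item This is the most possible. Apply Lemma \ref{lema3} on a closed interval inside $(2-\sqrt2,2+\sqrt2)$ containing the simple zero $r\approx1.75$ of $W_5$ (Appendix \ref{app.w}, Case 2): $m$ has at most six zeros there.
\item Hence $m$ then has no zeros at all on $[1,3]$.
\item So all ten zeros on $(1,3)$ must be created by the five free $\alpha$-coefficients, which are driven by $h_1$.
\end{itemize}
No general lemma yields this. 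The paper settles it by exhibiting explicit $h_1,h_2,h_3$ and locating the sixteen simple zeros. You need to supply such an explicit example. Once you do, your planned certification by sign changes at rational points would be more rigorous than the paper's check through approximate zeros of $M'$.
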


Statement (a) considers each annulus separately. Therefore, the parameters that maximize the number of limit cycles in one annulus need not be the same as those in the others. On the other hand, statement (b) uses a single choice of parameters that allows limit cycles to bifurcate simultaneously from all three annuli, resulting in the example of sixteen limit cycles.

After analyzing Theorem \ref{teob}, we study in Theorem \ref{teoc} the case where the center of one of the circles is no longer aligned with the line formed by the linear center and the center of the other circle. In this scenario, we also find three annuli of periodic orbits, one where the orbits intersect both circles, and two others where each annulus intersects a different circle. This loss of alignment introduces new asymmetries that further influence the number of limit cycles.

\begin{teoremaLetra}\label{teoc}
    Consider the piecewise polynomial holomorphic system of the form
      \begin{eqnarray}\label{eq.it3}
\begin{cases}
\dot{z}=iz+\varepsilon h_{1}(z) \quad \mbox{if} \quad |z+2|<1,\\
\dot{z}=iz+\varepsilon h_{2}(z)\quad \mbox{if} \quad |z+2|>1 \quad \hbox{and }\quad |z-2-i|>1  \\
\dot{z}=iz+\varepsilon h_{3}(z) \quad \mbox{if} \quad |z-2-i|<1,
\end{cases}
\end{eqnarray}
where
\begin{eqnarray*}
    h_{1}(z)=\sum_{j=0}^{4}(a_{1_{j}}+ib_{1_{j}})z^{j}, h_{2}(z)=\sum_{j=0}^{4}(a_{2_{j}}+ib_{2_{j}})z^{j} \; \hbox{and} \; h_{3}(z)=\sum_{j=0}^{4}(a_{3_{j}}+ib_{3_{j}})z^{j},
\end{eqnarray*}
we obtain that:
\begin{enumerate}
    \item[a)]  The lower bound for the number of limit cycles bifurcating from the periodic solutions, for $\varepsilon$ sufficiently small is $5$ for the periodic orbits that cross only $|z+2|=1$, $10$ for the periodic orbits that cross both $|z+2|=1$ and $|z-2-i|=1$, and $5$ for the periodic orbits that cross only $|z-2-i|=1$.
    \item[b)] There exist parameters such that the system (\ref{eq.it3}) has $14$ limit cycles. 
\end{enumerate}
\end{teoremaLetra}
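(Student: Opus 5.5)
The plan is to use first-order averaging (Melnikov function) for piecewise smooth systems, written in polar coordinates. Setting $z=re^{i\theta}$, the unperturbed system $\dot z=iz$ gives $\dot\theta=1$, $\dot r=0$. For the perturbed field in zone $k$ we have $\dot r=\varepsilon\,\mathrm{Re}(h_k(z)e^{-i\theta})$ and $\dot\theta=1+\varepsilon\,\mathrm{Im}(h_k(z)e^{-i\theta})/r$. Taking $\theta$ as the new time, the equation becomes $dr/d\theta=\varepsilon F_k(r,\theta)+O(\varepsilon^2)$, with $F_k(r,\theta)=\mathrm{Re}(h_k(re^{i\theta})e^{-i\theta})=\sum_j r^j\left(a_{k_j}\cos((j-1)\theta)-b_{k_j}\sin((j-1)\theta)\right)$.

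First I determine the annuli. The circle $|z|=r$ crosses $|z+2|=1$ iff $1<r<3$, and crosses $|z-2-i|=1$ iff $\sqrt5-1<r<\sqrt5+1$. This yields three annuli: $(1,\sqrt5-1)$ crossing only the first circle, $(\sqrt5-1,3)$ crossing both, and $(3,\sqrt5+1)$ crossing only the second. Within each annulus I compute the intersection angles explicitly. For the first circle, $\cos\theta=-(r^2+3)/(4r)$ gives the angles $\pi\pm\alpha(r)$. For the second, $r^2-2r(2\cos\theta+\sin\theta)+4=0$ gives $\phi_0\pm\beta(r)$, where $\phi_0=\arctan(1/2)$ and $\cos\beta=(r^2+4)/(2\sqrt5\,r)$. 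The averaged function on each annulus is then $f(r)=\sum_k\int_{I_k(r)}F_k(r,\theta)\,d\theta$. Here $I_k(r)$ is the set of angles where the circle $|z|=r$ lies in zone $k$: an arc of length $2\alpha$ (respectively $2\beta$) for zones 1 and 3, and the complement for zone 2. Integrating the trigonometric monomials gives $f$ as a linear combination, with coefficients linear in $(a_{k_j},b_{k_j})$, of functions of the form $r^j\sin((j-1)\alpha)$, $r^j\cos((j-1)\alpha)$, $r^j\alpha$, and the analogous expressions in $\beta$ shifted by $\phi_0$, together with $r^j$ terms from zone 2. Using $\cos\alpha=(r^2+3)/(4r)$ and $\sin\alpha=\sqrt{16r^2-(r^2+3)^2}/(4r)$, the Chebyshev expansions make these into algebraic functions $P(r)\sqrt{\Delta_1(r)}$ plus polynomials, plus the transcendental terms $r\alpha(r)$ and $r\beta(r)$ (which come only from $j=1$).

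Next, for each annulus I extract a basis of linearly independent generating functions $g_1,\dots,g_m$, so that $f=\sum c_i g_i$ with the $c_i$ independent linear forms in the parameters. For the one-circle annuli I expect $m=6$ functions, for example $r\alpha$, $r^2$, $r^4$ (or similar), $\sqrt{\Delta}$ times powers of $r$, and the analogous terms in $\beta$. For the middle annulus the two circles contribute separately and I expect $m=11$. By the standard fact that an extended Chebyshev system of $m$ functions, or more simply any $m$ linearly independent functions, produces some combination with $m-1$ simple zeros in the interval, I get the lower bounds $5$, $10$, $5$. I establish independence by computing a Wronskian at one interior point, or by checking a Taylor expansion at an endpoint where the functions have distinct leading exponents (at $r=1$, $\alpha\sim\sqrt{r-1}$, giving distinct half-integer powers). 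Simple zeros of $f$ then give hyperbolic limit cycles by the averaging theorem for discontinuous systems, provided the crossing condition holds. This is automatic for small $\varepsilon$, since $\dot\theta\approx1$ is transversal away from tangency points.

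For part (b) I fix one parameter vector and verify numerically and rigorously that the three averaged functions have $2$, $10$ and $2$ simple zeros on their respective intervals. I choose the $c_i$ of the middle annulus so that it has 10 prescribed zeros by solving a linear system for the coefficients. The remaining freedom is limited because the same $(a_{k_j},b_{k_j})$ enter all three functions. Zone 2 coefficients and zone 1 coefficients appear in both the first and middle annuli, so only a few degrees of freedom remain, which explains why only $2+2$ can be achieved. I then certify the sign changes by evaluating $f$ at rational points with interval arithmetic. The main obstacle is the independence and counting step for the middle annulus, namely proving that the 11 functions built from $\alpha$ and $\beta$ are genuinely independent despite overlap through zone 2 and $r^j$ terms, and then finding a simultaneous parameter choice in (b). For the first I would compute the Wronskian symbolically at one rational point.
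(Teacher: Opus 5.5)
Your part (a) is sound and is essentially the paper's proof in averaging form. On each annulus your $f(r)$ equals $-M(r)/r$. The counts $6,11,6$ come out right once you note two things. First, zone~2 contributes no polynomial term other than $r$ (from $j=1$ over the full circle), so there are no $r^2,r^4$ terms. Second, the $\sin$/$\cos$ pairs from the off-axis circle collapse, since $\int_{\phi_0-\beta}^{\phi_0+\beta}e^{im\theta}\,d\theta=\frac{2}{m}e^{im\phi_0}\sin(m\beta)$ for $m\neq0$; this is the paper's reduction from $15$ to $11$ functions. Linear independence plus Lemma~\ref{lem1}(b), with $r$ of constant sign, suffices for the lower bounds, so you need less than the paper's ECT verification on the outer annuli. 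Prove independence with the Wronskian, though, not by distinct leading exponents at $r=1$: every basis function except $r$ starts with a multiple of $\sqrt{r-1}$.

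The genuine gap is part (b). It is an existence claim, and the paper settles it only by exhibiting explicit $h_1,h_2,h_3$ and certifying $2$, $10$, $2$ simple zeros. You give no parameters, and the procedure you sketch cannot be relied on to produce them. Write
\[
f_{\mathrm{mid}}(r)=\int_0^{2\pi}F_2\,d\theta+\int_{I_1(r)}(F_1-F_2)\,d\theta+\int_{I_3(r)}(F_3-F_2)\,d\theta .
\]
The two outer averaged functions are the same expression with the last, respectively the middle, integral removed. So the $11$ coefficients of $f_{\mathrm{mid}}$ determine both outer functions completely. In the paper's example, the $\theta_r^1$-coefficients of $M$ are exactly those of $M_1$, the $\theta_r^4$-coefficients are those of $M_3$, and $0.441629-0.152430\,\pi\approx-0.037244$ is their common $r^2$-coefficient.

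Consequently, once you solve the linear system prescribing $10$ zeros of $f_{\mathrm{mid}}$, no coefficient freedom is left except a scalar. This contradicts your claim that ``a few degrees of freedom remain.'' The outer functions are then already fixed, and nothing in your plan shows that some placement of the $10$ zeros leaves two simple zeros in each of $(1,\sqrt5-1)$ and $(3,\sqrt5+1)$. No counting argument can supply this either: $2+10+2=14$ prescribed zeros are $14$ linear conditions on an $11$-dimensional coefficient space.

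The $14$ cycles are a numerical fact about a specific parameter vector, which has to be found and displayed. Your interval-arithmetic check is a reasonable certification method, but it has nothing to certify until you do. Your remark that the coupling ``explains why only $2+2$ can be achieved'' is also unsupported; no such upper bound is shown, and none is needed.
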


As in Theorem \ref{teob}, the bounds $5,10$ and $5$ refer separately to three annuli and cannot be added. Statement (b), by contrast, uses a single common perturbation resulting in fourteen limit cycles.
\begin{figure}[!htb]
\centering
\includegraphics[scale=0.40]{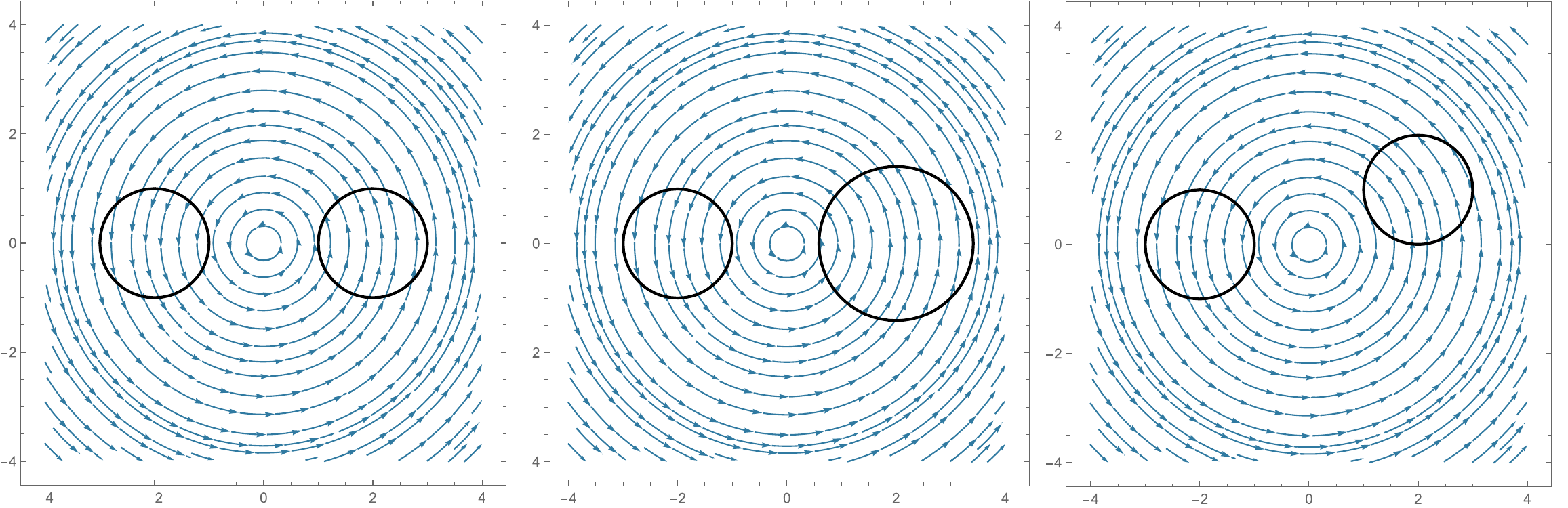}
\caption{Phase portraits of systems (\ref{eq.it1}), (\ref{eq.it2}) and (\ref{eq.it3}), for $\varepsilon=0$.}
\label{img1(s2).in}
\end{figure}

The proofs of these theorems are based on an analysis of the Melnikov function for the corresponding piecewise systems. Through estimating its zeros, we are able to estimate the number of limit cycles emerging from the perturbation of the center.

The structure of the paper is as follows: In Section \ref{sec2}, the preliminaries are presented; in Section \ref{sec3}, we provide the proof of Theorem \ref{teoa}; in Section \ref{sec4}, we present the proof of Theorem \ref{teob}; in Section \ref{sec5}, we prove Theorem \ref{teoc}; finally, Appendix \ref{sec6} contains the proof of Theorem \ref{teomelnikov}, and Appendix \ref{app.w} collects the Wronskian computations.

\section{Preliminaries}\label{sec2}

This section provides initial results that play a key role in the development of the rest of the study.
\subsection{Melnikov function}
 We will review some results on Melnikov functions for piecewise smooth systems, based on \cite{refb8}, \cite{refb14}, \cite{refb16} and \cite{refb17}, adapting them to our setting with different configurations of discontinuities. By following these works, we apply the same methodology while preserving the structure of the arguments. The detailed arguments are deferred to Appendix \ref{sec6}.

Let $\Sigma_{1}, \Sigma_{2},\tilde{\Sigma }_{1}$ and $\tilde{\Sigma }_{2}$ be smooth curves such that $\Sigma =\Sigma_{1}\cup \Sigma_{2}$ and $\tilde{\Sigma}=\tilde{\Sigma}_{1}\cup \tilde{\Sigma}_{2}$, and  $\Sigma $ and $\tilde{\Sigma }$ divide the plane $\mathbb{R}^{2}$ into three open and connected sets denoted by $S_{1},S_{2}$ and $S_{3}$.

Consider a piecewise smooth system
\begin{eqnarray}\label{melnikov1}
\begin{cases}
\dot{x}=H_{y}(x,y)+\varepsilon f(x,y),\\
\dot{y}=-H_{x}(x,y)+\varepsilon g(x,y),
\end{cases}
\end{eqnarray}
where $\varepsilon $ is a small parameter,
\[
\begin{array}{rl}
f(x,y) = 
  \begin{cases}
    f_1(x,y), & (x,y) \in S_{1} \\[6pt]
    f_2(x,y), & (x,y) \in S_{2} \\[6pt]
    f_3(x,y), & (x,y) \in S_{3}
  \end{cases}
\quad , \quad
g(x,y) = 
  \begin{cases}
    g_1(x,y), & (x,y) \in S_{1} \\[6pt]
    g_2(x,y), & (x,y) \in S_{2} \\[6pt]
    g_3(x,y), & (x,y) \in S_{3}
  \end{cases}
\end{array}
\]
and
\[
H(x,y) = 
  \begin{cases}
    H_1(x,y), & (x,y) \in S_{1} \\[6pt]
    H_2(x,y), & (x,y) \in S_{2} \\[6pt]
    H_3(x,y), & (x,y) \in S_{3}
  \end{cases}
\]
where $f_{i}(x,y),g_{i}(x,y)$ and $H_{i}(x,y)$ for $i=1,2,3$ are analytic functions.

Suppose that the unperturbed system (\ref{melnikov1}) (when $\varepsilon =0 $) satisfies the following assumptions:

\begin{Assumption}\label{Assumption1}
There exists a family of periodic orbits $L_{h}$, where $h\in J=(a,b)$  is a non-empty interval. In particular, each $L_{h}$ in $S_{1}$ is defined by $H_{1}(x,y)=h$.
\end{Assumption}
\begin{Assumption}\label{Assumption2} Each $L_{h}$ intersects transversely the curves  $\Sigma_{i}$ and $\tilde{\Sigma}_{i}$, intersecting each of them in exactly one point: $A_{i}(h)\in \Sigma _{i}$ and $\tilde{A}_{i}(h)\in \tilde{\Sigma }_{i}$ for $i=1,2$. Furthermore, $L_{h}$ crosses $\Sigma_{i}$ and $\tilde{\Sigma}_{i}$ in the counterclockwise direction.
\end{Assumption}

Under the  Assumptions \ref{Assumption1} and \ref{Assumption2}, the unperturbed system has a family of periodic orbits, $L_{h}=L_{h}^{1}\cup L _{h}^{2}\cup L_{h}^{3}\cup L_{h}^{4}$, for $h\in J$, where $L_{h}^{1}$ is an orbit that starts at $A_{1}(h)$ and ends at $A_{2}(h)$, $L_{h}^{2}$ is an orbit that starts at $A_{2}(h)$ and ends at $\tilde{A}_{2}(h)$, $L_{h}^{3}$ is an orbit that starts at  $\tilde{A}_{2}(h)$ and ends at  $\tilde{A}_{1}(h)$, and $L_{h}^{4}$ is an orbit that starts at  $\tilde{A}_{1}(h)$ and ends at  $A_{1}(h) $ (as illustrated in Figure \ref{img1(s2)}).

If $\Sigma_{1}=\{ (x,y);\varphi_{1}(x,y)=0\}$, $\Sigma_{2}=\{ (x,y);\varphi_{2}(x,y)=0\}$, $\tilde{\Sigma}_{1}=\{ (x,y);\tilde{\varphi}_{1}(x,y)=0\}$ and $\tilde{\Sigma}_{2}=\{ (x,y);\tilde{\varphi}_{2}(x,y)=0\}$, with $\varphi_{1},\varphi_{2},\tilde{\varphi }_{1},\tilde{\varphi }_{2}\in C^{1}(\mathbb{R}^{2},\mathbb{R})$, then:
\begin{teorema}\label{teomelnikov}
    Consider system (\ref{melnikov1}) with $\varepsilon $ sufficiently small. Under the Assumptions \ref{Assumption1} and \ref{Assumption2}, the first-order Melnikov function can be expressed as
\begin{equation}
\begin{aligned}
M(h) &= \Psi_{1}(h) \int_{L_{h}^{4}} (g_{2}\,dx - f_{2}\,dy)
     + \Psi_{2}(h) \int_{L_{h}^{3}} (g_{3}\,dx - f_{3}\,dy) \\
     &\quad + \Psi_{3}(h) \int_{L_{h}^{2}} (g_{2}\,dx - f_{2}\,dy)
     + \Psi_{4}(h) \int_{L_{h}^{1}} (g_{1}\,dx - f_{1}\,dy)
\end{aligned}
\label{funcaomelnikov}
\end{equation}
where
 \begin{equation*}
    \begin{aligned}
        \Psi_{1}(h)&=  \frac{\langle \nabla H_{1}(A_{1}), \nabla \varphi_{1}^{\perp}(A_{1}) \rangle}{\langle \nabla H_{2}(A_{1}), \nabla \varphi_{1}^{\perp}(A_{1}) \rangle} ,\\
        \Psi_{2}(h) &= \frac{\langle \nabla H_{1}(A_{1}), \nabla \varphi_{1}^{\perp}(A_{1}) \rangle \langle \nabla H_{2}(\tilde{A}_{1}), \nabla \tilde{\varphi}_{1}^{\perp}(\tilde{A}_{1}) \rangle }{\langle \nabla H_{2}(A_{1}), \nabla \varphi_{1}^{\perp}(A_{1}) \rangle \langle \nabla H_{3}(\tilde{A}_{1}), \nabla \tilde{\varphi}_{1}^{\perp}(\tilde{A}_{1}) \rangle } \\
        \Psi_{3}(h)& =\frac{\langle \nabla H_{1}(A_{1}), \nabla \varphi_{1}^{\perp}(A_{1}) \rangle \langle \nabla H_{2}(\tilde{A}_{1}), \nabla \tilde{\varphi}_{1}^{\perp}(\tilde{A}_{1}) \rangle \langle \nabla H_{3}(\tilde{A}_{2}), \nabla \tilde{\varphi}_{2}^{\perp}(\tilde{A}_{2}) \rangle  }{\langle \nabla H_{2}(A_{1}), \nabla \varphi_{1}^{\perp}(A_{1}) \rangle \langle \nabla H_{3}(\tilde{A}_{1}), \nabla \tilde{\varphi}_{1}^{\perp}(\tilde{A}_{1}) \rangle \langle \nabla H_{2}(\tilde{A}_{2}), \nabla \tilde{\varphi}_{2}^{\perp}(\tilde{A}_{2}) \rangle}  \\
        \Psi_{4}(h)& =\frac{\langle \nabla H_{1}(A_{1}), \nabla \varphi_{1}^{\perp}(A_{1}) \rangle \langle \nabla H_{2}(\tilde{A}_{1}), \nabla \tilde{\varphi}_{1}^{\perp}(\tilde{A}_{1}) \rangle \langle \nabla H_{3}(\tilde{A}_{2}), \nabla \tilde{\varphi}_{2}^{\perp}(\tilde{A}_{2}) \rangle  \langle \nabla H_{2}(A_{2}), \nabla \varphi_{2}^{\perp}(A_{2}) \rangle}{\langle \nabla H_{2}(A_{1}), \nabla \varphi_{1}^{\perp}(A_{1}) \rangle \langle \nabla H_{3}(\tilde{A}_{1}), \nabla \tilde{\varphi}_{1}^{\perp}(\tilde{A}_{1}) \rangle \langle \nabla H_{2}(\tilde{A}_{2}), \nabla \tilde{\varphi}_{2}^{\perp}(\tilde{A}_{2}) \rangle \langle \nabla H_{1}(A_{2}), \nabla \varphi_{2}^{\perp}(A_{2}) \rangle} 
    \end{aligned}   
\end{equation*}

    Furthermore, if $M(h)=0$ and $M'(h)\neq 0$ for some $h\in J$, then for $\varepsilon $ sufficiently small, the system  (\ref{melnikov1}) has a unique limit cycle bifurcating from $L_{h}$.
\end{teorema}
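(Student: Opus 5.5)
The plan is to construct the first-return (Poincaré) map of the perturbed system (\ref{melnikov1}) on a transversal section at $A_{1}(h)$. I will use the level value of $H_{1}$ as the coordinate there and expand to first order in $\varepsilon$. Following the orbit counterclockwise, the map is a composition of four transition maps: $A_{1}\to A_{2}$ in $S_{1}$ (arc $L_h^1$), $A_{2}\to\tilde A_{2}$ in $S_{2}$ (arc $L_h^2$), $\tilde A_{2}\to \tilde A_{1}$ in $S_{3}$ (arc $L_h^3$), and $\tilde A_{1}\to A_{1}$ in $S_{2}$ (arc $L_h^4$). In each zone $S_{k}$ I measure position by the local energy $H_{k}$. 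The first-order change of $H_{k}$ along an arc of $L_h$ inside $S_k$ is computed from $\dot H_{k}=\varepsilon(H_{k,x}f_{k}+H_{k,y}g_{k})$ and integrated along the unperturbed arc using $dx=H_{k,y}\,dt$, $dy=-H_{k,x}\,dt$. This gives $\varepsilon\int (g_{k}\,dx-f_{k}\,dy)$ up to sign, with error $O(\varepsilon^{2})$ by regular dependence on parameters. The dependence is regular because transversality (Assumption \ref{Assumption2}) and the implicit function theorem make the hitting points on $\Sigma_i,\tilde\Sigma_i$ depend analytically on the initial energy and on $\varepsilon$.

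The second step is the change of energy coordinate at each crossing point. On the curve $\Sigma_{1}=\{\varphi_{1}=0\}$, near $A_{1}(h)$, I parametrize by arclength $s$ along the tangent direction $\nabla\varphi_{1}^{\perp}$. Then $\frac{d}{ds}H_{1}=\langle\nabla H_1,\nabla\varphi_1^{\perp}\rangle$ and $\frac{d}{ds}H_{2}=\langle\nabla H_2,\nabla\varphi_1^{\perp}\rangle$, both evaluated at $A_{1}$. Both are nonzero by transversality. Consequently, a first-order displacement $\delta$ in $H_{2}$-coordinates corresponds to the displacement $\delta\,\langle\nabla H_1,\nabla\varphi_1^{\perp}\rangle/\langle\nabla H_2,\nabla\varphi_1^{\perp}\rangle$ in $H_{1}$-coordinates. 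The same holds at $\tilde A_{1},\tilde A_{2},A_{2}$ with the appropriate pair of Hamiltonians.

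The third step composes the four linearized displacements. I write $P(h,\varepsilon)-h=\varepsilon M(h)+O(\varepsilon^{2})$ in $H_1$-coordinates. Tracking backwards from the arrival point $A_{1}$, the contribution of $L_h^4$ (in $S_2$) gets one conversion factor, $\Psi_{1}$. The contribution of $L_h^3$ (in $S_3$) gets the product of conversions at $\tilde A_{1}$ and $A_{1}$, which is $\Psi_{2}$. The contribution of $L_h^2$ also picks up the conversion at $\tilde A_{2}$, giving $\Psi_{3}$. The contribution of $L_h^1$ (in $S_1$) gets all four ratios, giving $\Psi_4$. The key justification is that the unperturbed transition maps between levels are the identity in energy coordinates, except for the level-to-level correspondence across each curve. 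Their derivative is therefore exactly the chain of these ratios, and first-order errors propagate only linearly. This reproduces (\ref{funcaomelnikov}). I must check the orientation conventions (counterclockwise crossing) so that the signs of all four terms match.

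The final step concerns the zeros. The displacement function is $d(h,\varepsilon)=\varepsilon(M(h)+O(\varepsilon))$, so $d/\varepsilon$ extends smoothly to $\varepsilon=0$ with value $M$. If $M(h_0)=0$ and $M'(h_0)\neq0$, the implicit function theorem yields a unique zero $h(\varepsilon)\to h_0$ near $h_0$, and hence a unique, hyperbolic limit cycle near $L_{h_0}$. I expect the main obstacle to be the third step. There one must carefully justify that the intermediate level values, which are not all $h$, enter the products only at zeroth order. One must also ensure the correct evaluation points and the correct sign in each inner product.
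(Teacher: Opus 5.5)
Your proposal is correct and is essentially the paper's argument. The paper writes the displacement $H_1(A_{1,\varepsilon}(h))-H_1(A_1(h))$ as an eight-term telescoping sum: four arc terms, each equal to $\varepsilon\int(g_k\,dx-f_k\,dy)+O(\varepsilon^2)$, and four jump terms $H_i-H_j$ at the crossing points. It then introduces the tangential velocities $\lambda_1,\lambda_2,\tilde\lambda_1,\tilde\lambda_2$ of the crossing points along $\nabla\varphi^{\perp}$, solves for them recursively, and finishes with the same implicit-function-theorem step applied to $\Delta/\varepsilon$; this is exactly your composition of linearized transition maps with an energy-conversion ratio at each crossing, organized differently.
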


\begin{figure}[!htb]
\centering
\includegraphics[scale=1.0]{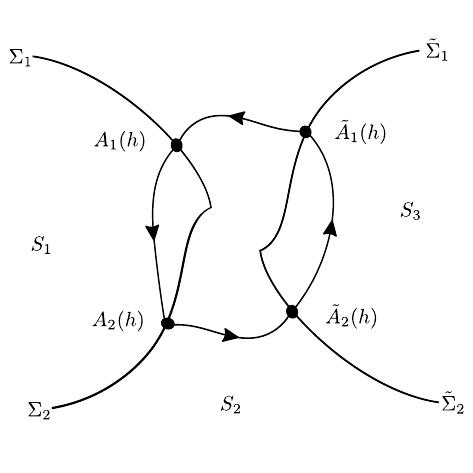}
\caption{The closed orbits of the unperturbed system (\ref{melnikov1}).}
\label{img1(s2)}
\end{figure}

\refstepcounter{remark}
\label{obs.1} 
\noindent\textbf{Remark  \theremark.} 
The Assumptions \ref{Assumption1} and \ref{Assumption2} guarantee the existence of a family of periodic orbits that intersect the curves $\Sigma $ and $\tilde{\Sigma }$. However, it is also possible to have families of periodic orbits that intersect only one of these curves. To consider these cases, without  loss of generality, we describe the case of orbits that intersect only $\Sigma $ and state that the case of orbits that intersect only $\tilde{\Sigma }$ follows in an analogous manner (this case is treated in \cite{refb19}).
\begin{enumerate}
    \item There exists a family of periodic orbits $l_{h}$, where $h\in I_{1}=(a_{1},b_{1})$ is a non-empty interval. In particular, each $l_{h}$ in $S_{1}$ is defined by $H_{1}(x,y)=h$.
    \item  Each $l_{h}$ intersects transversely the curves $\Sigma_{i}$ intersecting each of them in exactly one point $a_{i}(h)\in\Sigma_{i}$ for $i=1,2$. Furthermore, $l_{h}$ crosses $\Sigma_{i}$ in the counterclockwise direction.
\end{enumerate}
    In this case, the Melnikov function takes the form:
    \begin{eqnarray*}
        M_{1}(h)&=&\dfrac{\langle \nabla H_{1}(a_{1}),\nabla \varphi_{1}^{\perp}(a_{1}) \rangle}{\langle \nabla H_{2}(a_{1}),\nabla \varphi_{1}^{\perp}(a_{1}) \rangle}\int_{l_{h}^{1}}(g_{2}dx-f_{2}dy)+\\
        & &\dfrac{\langle \nabla H_{1}(a_{1}),\nabla \varphi_{1}^{\perp}(a_{1}) \rangle \langle \nabla H_{2}(a_{2}),\nabla \varphi_{2}^{\perp}(a_{2}) \rangle}{\langle \nabla H_{2}(a_{1}),\nabla \varphi_{1}^{\perp}(a_{1}) \rangle \langle \nabla H_{1}(a_{2}),\nabla \varphi_{2}^{\perp}(a_{2}) \rangle}\int_{l_{h}^{2}}(g_{1}dx-f_{1}dy)
    \end{eqnarray*}

Furthermore, if $M_{1}(h)=0$ and $M_{1}'(h)\neq 0$ for some $h\in I_{1}$, then for sufficiently small $\varepsilon $, the system (\ref{melnikov1}) has a unique limit cycle bifurcating from $l_{h}$, where $l_{h}=l_{h}^{1}\cup l_{h}^{2}$.

\refstepcounter{remark}
\label{obs.2} 
\noindent\textbf{Remark  \theremark.} 
In particular, consider the system 
\begin{eqnarray*}
\begin{cases}
\dot{z}=A(z)+\varepsilon B_{1}(z) \quad \mbox{if}\quad z\in S_{1},\\
\dot{z}=A(z)+\varepsilon B_{2}(z)\quad \mbox{if} \quad  z\in S_{2} \\
\dot{z}=A(z)+\varepsilon B_{3}(z) \quad \mbox{if} \quad z\in S_{3},
\end{cases}
\end{eqnarray*}
where $A(x+iy)=H_{y}-iH_{x}$ and the unperturbed system satisfies Assumptions \ref{Assumption1} and \ref{Assumption2}. In this case, the Melnikov function can be expressed as
\[
    M(h)=\Re\left(\int_{L_{h}^{1}}i\overline{B}_{1}(z)dz \right)+\Re\left(\int_{L_{h}^{2}}i\overline{B_{2}}(z) dz\right)\]\[+
    \Re\left(\int_{L_{h}^{3}}i\overline{B_{3}}(z) dz\right)+\Re\left(\int_{L_{h}^{4}}i\overline{B_{2}}(z) dz\right).
\]  

Assuming that the unperturbed system satisfies the hypotheses of Remark \ref{obs.1}, the Melnikov function can be expressed as
\[
    M_{1}(h)=\Re\left(\int_{l_{h}^{1}}i\overline{B}_{2}(z)dz \right)+\Re\left(\int_{l_{h}^{2}}i\overline{B_{1}}(z) dz\right).
\] 

In fact, it is sufficient to observe that if $B=u+iv$ is a continuous function and $\gamma $ is a smooth curve, then
\begin{eqnarray*}
    \int_{\gamma }i\overline{B}dz=\int_{\gamma }(vdx-udy)+i\int_{\gamma }(vdy+udx).
\end{eqnarray*}

\subsection{Zero-counting results}
    We begin this subsection by presenting a result from \cite{ref3}, which will be used together with two additional results from \cite{ref4}. These three results will be fundamental for estimating the number of simple zeros of the Melnikov function.
\begin{lema}\label{lem1}
Consider $n+1$ linearly independent functions $f_{i}:U\subset \mathbb{R}\rightarrow \mathbb{R}$, $i=0,\cdots n$, then:
\begin{enumerate}
\item[a)] Given $n+1$ arbitrary values $x_{i}\in U$, $i=0,\cdots ,n$, there exist $n+1$ constants $C_{i}$, $i=0,\cdots n$ such that
\begin{eqnarray}\label{ar.eq1}
f(x):=\sum_{i=0}^{n}C_{i}f_{i}
\end{eqnarray}
is not the zero function and $f(x_{i})=0$, $i=0,\cdots ,n$;
\item[b) ] Furthermore, if all $f_{i}$ are analytic functions on $U$ and there exists $0\leq j\leq n$ such that $f_{j} \big|_U$ has a constant sign, it is possible to get an $f$ by (\ref{ar.eq1}), such that it has at least $n$ simple zeros in $U$.
\end{enumerate}
\end{lema}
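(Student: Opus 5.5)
Part (a) is pure linear algebra. I will prove it by viewing the evaluation conditions as a homogeneous linear system. Put $F(x)=(f_0(x),\dots,f_n(x))\in\mathbb{R}^{n+1}$ and form the $(n+1)\times(n+1)$ matrix whose $k$-th row is $F(x_k)$, $k=0,\dots,n$. The conditions $f(x_k)=0$ read $\sum_i C_i f_i(x_k)=0$ for all $k$, i.e.\ $C=(C_0,\dots,C_n)$ lies in the kernel of this matrix.

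If the matrix is singular, a nonzero kernel vector $C$ exists. If it is nonsingular, only $C=0$ works, so a different route is needed. The statement should then be read in the standard way (as in the cited source): we impose vanishing at $n$ of the points, or the $x_i$ are not all distinct. With $n$ conditions on $n+1$ unknowns there is always a nonzero solution $C$. Linear independence of the $f_i$ guarantees that $f=\sum C_if_i$ is not the zero function whenever $C\neq0$. So the key point is simply rank $\le n<n+1$, combined with linear independence. I will phrase (a) carefully so that the count of conditions is strictly less than the number of unknowns, or handle the degenerate case separately.

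For (b), the plan is induction-free and uses the sign-definite function. Assume $f_j$ has constant sign on $U$, say $f_j>0$. Choose $n$ distinct points $x_1<\dots<x_n$ in $U$. By (a) applied to these $n$ conditions, obtain a nonzero $f=\sum C_if_i$ with $f(x_k)=0$, $k=1,\dots,n$.

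The obstacle is \emph{simplicity} of these zeros, and I expect this to be the main step. I will handle it by perturbation. Since $f$ is analytic and nonzero, its zeros are isolated and of finite order. The plan is to choose the points generically so that each zero is simple. Concretely, consider the linear map $C\mapsto (f_C(x_1),\dots,f_C(x_n))$. Linear independence plus analyticity lets one choose the $x_k$ successively so that this map has rank $n$ restricted to the hyperplane complementary to $e_j$: at each step, pick $x_k$ outside the isolated zero set of a nonzero analytic function built from the previous choices. The kernel is then one-dimensional and spanned by some $C^*$ with $C^*_j\neq0$, giving $f^*=\sum C^*_if_i$ vanishing at $x_1,\dots,x_n$.

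Now perturb: $f_\delta=f^*+\delta f_j$. If a zero $x_k$ of $f^*$ has even multiplicity, a small $\delta$ of the right sign splits it into two simple zeros nearby. If it has odd multiplicity $\ge3$, it splits into at least one simple zero, because $f_j$ has fixed sign near $x_k$. Each $x_k$ that is a zero of odd order remains, after perturbation, a sign change of $f_\delta$, and hence a simple zero for generic small $\delta$. The sign-definiteness of $f_j$ is used exactly here: $f_j$ shifts $f^*$ uniformly up or down near all zeros at once.

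Then I will count. Sign changes of $f^*$ persist under small perturbation. At points where $f^*$ touches zero without crossing, the constant-sign shift $\delta f_j$ with appropriate sign produces two crossings. If some touching zeros face opposite directions, I choose the sign to favor enough of them. Combining with the odd zeros, I will argue the total number of simple zeros of $f_\delta$ in $U$ is at least $n$. Finally, since $f_\delta$ is analytic, simplicity of sign-change zeros is generic in $\delta$, by Sard on the values $-f^*/f_j$. This establishes (b).
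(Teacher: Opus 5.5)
The paper gives no proof of this lemma. It is quoted from the literature (the original is due to Coll, Gasull and Prohens), so there is no internal argument to compare with, and your proposal has to stand on its own. It does.

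Your handling of (a) is correct and in fact necessary. As printed, with $n+1$ evaluation points, (a) is false: take $n=0$, $f_0\equiv1$. More generally, linearly independent $f_0,\dots,f_n$ always admit points with $\det(f_i(x_k))\neq0$, which forces $C=0$. The original statement imposes only $n$ conditions, which is exactly the version you prove and the one (b) needs.

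For (b), write $\phi=f^*/f_j$. The zeros of $f^*+\delta f_j$ are the solutions of $\phi=-\delta$, and at such points $(f^*+\delta f_j)'=f_j\phi'$. So you are shifting the level of $\phi$ uniformly, and that is exactly where the constant sign of $f_j$ enters.

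Two steps deserve to be written out.

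\emph{The count.} Let $O$ of the $x_k$ be zeros of odd order of $\phi$, and let $P$ (resp.\ $Q$) be zeros of even order at which $\phi$ has a local minimum (resp.\ maximum). Take the sign of $\delta$ that favours the larger of $P,Q$. For $|\delta|$ small this gives at least $O+2\max(P,Q)\ge O+P+Q=n$ zeros in disjoint neighbourhoods of the $x_k$. The inequality $2\max(P,Q)\ge P+Q$ is the whole content of ``favour enough of them''.

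\emph{Simplicity.} You need neither Sard nor a generic $\delta$. Since $\phi$ is analytic and nonconstant, $\phi'$ has no zeros in a punctured neighbourhood of $x_k$. Hence for $\delta\neq0$ every solution of $\phi=-\delta$ near $x_k$ is simple.

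The generic choice of points making the kernel one-dimensional with $C^*_j\neq0$ is never used and can be dropped.

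Finally, ``zeros are isolated and of finite order'' uses that $U$ is an interval. That hypothesis is present in the original source but missing from the statement here. On a disconnected $U$, (b) can fail: take $U=(0,1)\cup(2,3)$, $f_0\equiv1$, and $f_1$ equal to $0$ on $(0,1)$ and to $1$ on $(2,3)$. Every element of the span is then locally constant, so none has a simple zero.
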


Let $f_{0},\cdots ,f_{n}$ be functions that are differentiable on an interval $I$. Determining whether $f_{0},\cdots ,f_{n}$ are linearly independent is not a trivial task. A useful tool to determine if $f_{0},\cdots ,f_{n}$ are linearly independent is the Wronskian, which is defined by
\begin{eqnarray*}
W(f_0, f_1, \ldots, f_{n})(x)=\hbox{det}(M(f_{0},\cdots ,f_{n})(x)),
\end{eqnarray*}
where
\begin{eqnarray*}
M(f_{0},\cdots ,f_{n})(x)= \begin{bmatrix}
f_0(x) & f_1(x) & \cdots & f_{n}(x) \\
f_0'(x) & f_1'(x) & \cdots & f_{n}'(x) \\
\vdots & \vdots & \ddots & \vdots \\
f_0^{(n)}(x) & f_1^{(n)}(x) & \cdots & f_{n}^{(n)}(x)
\end{bmatrix}.
\end{eqnarray*}

If $W(f_0, f_1, \ldots, f_{n})(x)\neq 0$ for some $x\in I$, then $f_{0},\cdots ,f_{n}$ are linearly independent.
Let $\mathcal{F}=[f_{0},f_{1},\ldots,f_{n}]$ be an ordered set of analytic functions defined on a closed interval $[a,b]$. We denote by $Z(\mathcal{F})$ the maximum number of zeros, counting multiplicities, that any nontrivial function $v\in \operatorname{Span}(\mathcal{F})$ can have on $[a,b]$.
If $Z(\mathcal{F})\leq n$, then $\mathcal{F}$ is called an extended Chebyshev system, or an ET-system, on $[a,b]$.
We say that $\mathcal{F}$ is an extended complete Chebyshev system, or an ECT-system, on $[a,b]$ if, for every $k=0,\ldots,n$, the ordered subset $[f_{0},f_{1},\ldots,f_{k}]$ is an ET-system on $[a,b]$. Equivalently,
\begin{eqnarray*}
    Z([f_{0},f_{1},\ldots,f_{k}])\leq k, \qquad k=0,\ldots,n.
\end{eqnarray*}
\begin{lema}\label{lema2}
Let $\mathcal{F}=[f_{0},f_{1},\ldots,f_{n}]$ be an ordered set of analytic functions on $[a,b]$. Then:
    \begin{itemize}
        \item[a)] $\mathcal{F}$ is an ECT-system on $[a,b]$ if and only if
        \begin{eqnarray*}
            W_{k}(t)=W(f_{0},\ldots,f_{k})(t)\neq 0
        \end{eqnarray*}
        for every $t\in[a,b]$ and every $k=0,\ldots,n$.
        \item[b)] If $\mathcal{F}$ is an ECT-system on $[a,b]$, then every nontrivial element of $\operatorname{Span}(\mathcal{F})$ has at most $n$ zeros on $[a,b]$, counting multiplicities. Moreover, for every configuration of $m\leq n$ zeros, counting multiplicities, there exists an element of $\operatorname{Span}(\mathcal{F})$ realizing that configuration.
    \end{itemize}
\end{lema}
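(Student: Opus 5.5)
The plan is to treat the two parts separately. Part a) uses the classical Pólya factorization of the Wronskian differential operators combined with Rolle's theorem, and a kernel argument for the converse. Part b) is then linear algebra on top of part a).

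\emph{Sufficiency in a).} Assume $W_k(t)\neq 0$ on $[a,b]$ for all $k=0,\ldots,n$, and fix $k$ and a nontrivial $v\in\operatorname{Span}(f_0,\ldots,f_k)$. I would introduce the operators
\[
L_0v=v,\qquad L_jv=\frac{W(f_0,\ldots,f_{j-1},v)}{W_{j-1}},\quad j\geq 1 .
\]
I would then use the Sylvester-type Wronskian identity
\[
\frac{d}{dt}\left(\frac{W(f_0,\ldots,f_{j-1},v)}{W_j}\right)=\frac{W_{j-1}\,W(f_0,\ldots,f_j,v)}{W_j^{2}},
\]
which can be proved by induction or by the Desnanot--Jacobi identity. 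Written in terms of the $L_j$, it reads
\[
\frac{d}{dt}\left(\frac{L_jv}{W_j/W_{j-1}}\right)=\frac{W_{j-1}\,L_{j+1}v}{W_j^{2}/W_{j-1}}\cdot\frac{1}{W_{j-1}}\cdot W_j ,
\]
up to the nonvanishing factors $W_j,W_{j-1}$. Since these factors never vanish, each step is ``divide by a nonvanishing function and differentiate''. By Rolle's theorem, counted with multiplicities, this lowers the number of zeros on $[a,b]$ by at most one.

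After $k$ steps we reach $L_kv$, which is a nonzero constant multiple of $W_k$ divided by a nonvanishing factor. Here $v=\sum c_if_i$ with $c_k\neq 0$, after reducing to the largest $i$ with $c_i\neq0$. Hence $L_kv$ has no zeros, and therefore $v$ has at most $k$ zeros counting multiplicities. This gives $Z([f_0,\ldots,f_k])\leq k$ for every $k$, i.e.\ an ECT-system. The main obstacle is making this Rolle step rigorous for zeros with multiplicity. I would handle it by noting that a zero of order $r$ of $L_jv$ is a zero of order $r$ of $L_jv/(W_j/W_{j-1})$, so its derivative has a zero of order $r-1$ there, plus one new zero between consecutive distinct zeros.

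\emph{Necessity in a).} Suppose instead that $W_k(t_0)=0$ for some $t_0\in[a,b]$ and some $k$. Then the matrix $M(f_0,\ldots,f_k)(t_0)$ has a nontrivial kernel vector $(c_0,\ldots,c_k)$. The function $v=\sum c_if_i$ satisfies $v(t_0)=\cdots=v^{(k)}(t_0)=0$, so it has at least $k+1$ zeros counting multiplicity. Moreover, $v\not\equiv 0$: if the $f_i$ are linearly dependent, $[f_0,\ldots,f_k]$ already fails to be ET, since the zero element then lies in a nontrivial combination. So in either case $Z([f_0,\ldots,f_k])\geq k+1$, contradicting the ECT property.

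\emph{Part b).} The first assertion is just the case $k=n$ of the ECT definition. For the realization, take a configuration $\{(t_i,r_i)\}$ with $\sum r_i=m\leq n$. I would look for $v=\sum_{i=0}^{m}c_if_i$ satisfying the $m$ homogeneous linear conditions $v^{(s)}(t_i)=0$ for $0\leq s<r_i$. These are $m$ equations in $m+1$ unknowns, so a nontrivial solution exists. It is a nonzero function, because $[f_0,\ldots,f_m]$ is ET and hence linearly independent. Since $Z([f_0,\ldots,f_m])\leq m$, $v$ has no other zeros and each multiplicity is exactly $r_i$. So $v$ realizes precisely the prescribed configuration.
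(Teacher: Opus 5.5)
The paper does not prove this lemma; it quotes it from the literature (the classical Karlin--Studden characterization of ECT-systems, taken via \cite{ref4}), so there is no internal proof to compare against. Your argument is the standard proof of that classical result, and it is correct.

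\begin{itemize}
\item \emph{Sufficiency.} The Frobenius/Sylvester identity $\frac{d}{dt}\bigl(W(f_0,\ldots,f_{j-1},v)/W_j\bigr)=W_{j-1}W(f_0,\ldots,f_j,v)/W_j^{2}$, with the convention $W_{-1}:=1$ for the first step, gives the chain ``divide by a nonvanishing function and differentiate''. Your multiplicity-aware Rolle count shows each step loses at most one zero. The multilinearity fact $W(f_0,\ldots,f_{k-1},v)=c_kW_k$ closes the argument.
\item \emph{Necessity.} The kernel-vector argument at a zero of $W_k$ does the job.
\item \emph{Realization in b).} Solving $m$ homogeneous equations in the $m+1$ coefficients of $[f_0,\ldots,f_m]$, together with $Z([f_0,\ldots,f_m])\le m$, yields the prescribed zeros with exact multiplicities and no others.
\end{itemize}

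Compared with the paper's bare citation, your route makes the lemma self-contained. It also exposes a convention that the paper's wording hides. You read ``nontrivial element of $\operatorname{Span}(\mathcal F)$'' as ``element with a nontrivial coefficient vector'', so a linearly dependent family is automatically not ET. This reading is essential, not cosmetic. Under the paper's literal phrase ``nontrivial function'', the family $[1,1]$ would be an ECT-system, since every nonzero constant has no zeros, yet $W_1\equiv0$. With that reading, both the converse in a) and the realization claim in b) fail. Your necessity step and your ``ET hence linearly independent'' step in b) both depend on the coefficient reading. You should therefore state that convention explicitly at the outset rather than invoking it in passing.
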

\begin{lema}\label{lema3}
Let $\mathcal{F}=[f_{0},f_{1},\ldots,f_{n}]$ be an ordered set of $C^{\infty}$ functions on $[a,b]$. Assume that $W_{k}(t)\neq0$, $k=0,\ldots,n-1$, for every $t\in[a,b]$, and that $W_{n}$ has exactly one zero on $[a,b]$, which is simple. Then $Z(\mathcal{F})=n+1$. Moreover, for every configuration of $m\leq n+1$ zeros, counting multiplicities, there exists an element of $\operatorname{Span}(\mathcal{F})$ realizing that configuration.
\end{lema}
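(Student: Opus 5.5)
The plan is to prove the upper bound $Z(\mathcal{F})\le n+1$ with a Rolle-type argument built on the Pólya factorization of the Wronskian operator, and then to obtain realizability by a linear-algebra construction combined with that upper bound. The hypothesis $W_k\neq 0$ on $[a,b]$ for $k=0,\ldots,n-1$ says, by Lemma \ref{lema2}(a), that $[f_0,\ldots,f_{n-1}]$ is an ECT-system. It also lets us define the operator
\[
L[v]=\frac{W(f_0,\ldots,f_{n-1},v)}{W_{n-1}},
\]
which is an $n$-th order linear differential operator with leading coefficient $1$ whose kernel is $\operatorname{Span}(f_0,\ldots,f_{n-1})$. The classical Pólya/Mammana factorization writes
\[
L[v]=\omega_n\,\frac{d}{dt}\Bigl(\omega_{n-1}\frac{d}{dt}\Bigl(\cdots\,\omega_1\frac{d}{dt}\bigl(\omega_0 v\bigr)\cdots\Bigr)\Bigr),
\]
where the functions $\omega_k$ are explicit quotients of $W_{k-1},W_k,W_{k+1}$ (for instance $\omega_0=1/f_0=1/W_0$). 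All of them are smooth and nonvanishing on $[a,b]$ precisely because $W_0,\ldots,W_{n-1}$ do not vanish.

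\textbf{Upper bound.} Let $v=\sum_{i=0}^{n}c_if_i$ be nontrivial. If $c_n=0$, then $v$ lies in an ECT-system of $n$ functions and has at most $n-1$ zeros. If $c_n\neq0$, multilinearity of the determinant gives $W(f_0,\ldots,f_{n-1},v)=c_nW_n$, so
\[
L[v]=c_n\,\frac{W_n}{W_{n-1}},
\]
which has exactly one zero on $[a,b]$, and it is simple. Now suppose $v$ has $m$ zeros counting multiplicity. Multiplying by a nonvanishing $\omega_k$ does not change zeros or multiplicities, and by the generalized Rolle theorem each differentiation loses at most one zero, multiplicities counted. Applying this $n$ times shows that $L[v]$ has at least $m-n$ zeros. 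Hence $m-n\le 1$, that is, $m\le n+1$.

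\textbf{Realizability.} For a prescribed configuration of $m\le n+1$ zeros with multiplicities summing to $m$, impose the corresponding $m$ linear Hermite conditions on $(c_0,\ldots,c_n)$.
\begin{itemize}
\item If $m=n+1$, the homogeneous system has a nontrivial solution, exactly as in Lemma \ref{lem1}(a). The upper bound then forces the zero set to be exactly the prescribed one.
\item If $m\le n-1$, Lemma \ref{lema2}(b) applied to $[f_0,\ldots,f_{n-1}]$ already realizes the configuration. That system admits no extra zeros, since it is an ECT-system of $n$ functions.
\item If $m=n$, I would start from a function realizing the configuration with $n+1$ zeros obtained by adding one extra point, and push that extra point off the interval. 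Alternatively, I would perturb along $f_n$ and use that zeros of the prescribed multiplicity are preserved by the Hermite conditions.
\end{itemize}

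\textbf{Main obstacle.} I expect the hardest part to be exactness in the case $m=n$: ruling out an additional unwanted zero. I would handle it with a continuity/degree argument on the one-parameter family of solutions of the $n$ Hermite conditions. This family is one-dimensional generically; if the solution space is larger, the configuration can be adjusted within it. Within the family, an extra zero can be moved to an endpoint and beyond by varying the solution, and the count $n+1$ is never exceeded. Checking the rank of the Hermite system also requires care, but it follows from the ECT property of $[f_0,\ldots,f_{n-1}]$, which makes the $n$ conditions independent when restricted to the first $n$ functions.
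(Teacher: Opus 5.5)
Your upper bound is correct. The P\'olya--Mammana factorization of $L[v]=W(f_0,\ldots,f_{n-1},v)/W_{n-1}$ involves only $W_0,\ldots,W_{n-1}$. Multilinearity gives $L[v]=c_nW_n/W_{n-1}$, and $n$ Rolle steps then yield $m\le n+1$.

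The gap is in the lower bound and the realization. For $m=n+1$ you claim that $n+1$ homogeneous Hermite conditions on the $n+1$ unknowns $c_0,\ldots,c_n$ have a nontrivial solution. A square homogeneous system has one only if its determinant vanishes, and here it generally does not. If the nodes cluster near a point $s$ with $W_n(s)\neq0$, the collocation determinant divided by the (confluent) Vandermonde factor is close to $W_n(s)/\prod_{k=0}^{n}k!\neq0$. Likewise, an $(n+1)$-fold zero at a point $p$ forces $W_n(p)=0$, so it can only sit at $\xi$. Lemma~\ref{lem1}(a) cannot supply this step, since with $n+1$ independent functions one can impose only $n$ conditions.

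So your argument never establishes $Z(\mathcal F)\ge n+1$. The zero $\xi$ of $W_n$ enters only your upper bound, yet it is exactly what produces the extra zero. The missing step is as follows:
\begin{itemize}
\item $W_n(\xi)=0$ makes the columns of the Wronskian matrix at $\xi$ dependent. Hence some nontrivial $v=\sum c_jf_j$ satisfies $v(\xi)=\cdots=v^{(n)}(\xi)=0$, and so $Z(\mathcal F)=n+1$.
\item Other patterns $(k_1,\ldots,k_s)$ with $\sum k_i=n+1$ cannot be placed at prescribed nodes; for $m=n+1$ a ``configuration'' can only mean the multiplicity pattern. You must let the nodes float, say at $s+\varepsilon\tau_i$.
\item The normalized determinant equals $W_n(s)/\prod_k k!+O(\varepsilon)$ uniformly in $s$. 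The sign change of $W_n$ at its simple zero (which requires $\xi\in(a,b)$) and the intermediate value theorem give an $s$ where it vanishes.
\item Your upper bound then shows that the resulting element has exactly the prescribed multiplicities and no other zeros.
\end{itemize}

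Your case $m=n$ cannot be closed as sketched, and it cannot be closed in general. Because $[f_0,\ldots,f_{n-1}]$ is ECT, $n$ Hermite conditions at fixed nodes always leave exactly a one-dimensional solution space, so there is no larger family to move in. Its generator may carry an extra zero: prescribing an $n$-fold zero at $\xi$ forces the $(n+1)$-fold zero found above. Pushing the extra zero ``off the interval'' presupposes that the $f_j$ are defined beyond $[a,b]$.

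Even with free nodes the claim can fail. Take $\mathcal F=[1,t^2]$ on $[-1,1]$, which satisfies every hypothesis ($W_0=1$, $W_1=2t$). Every nontrivial $c_0+c_1t^2$ has $0$ or $2$ zeros on $[-1,1]$ counted with multiplicity, so no element has exactly one zero. The $m=n$ clause is therefore too strong as stated.

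With the fix above, your tools give $Z(\mathcal F)=n+1$, exact realization for $m\le n-1$ via Lemma~\ref{lema2}(b), and every pattern with $m=n+1$ when $\xi$ is interior. The interior condition matters: for $[1,t^2]$ on $[0,1]$, two simple zeros are impossible.
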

\refstepcounter{remark}
\label{obs.3}
\noindent\textbf{Remark  \theremark.}
Lemmas \ref{lema2} and \ref{lema3} require $\mathcal{F}$ to be analytic (resp. $C^{\infty}$) on the \emph{closed} interval $[a,b]$. In Sections \ref{sec3}--\ref{sec5}, the sets $\mathcal{F}$ are built from angle functions $\theta_{r}^{i}=\arccos(\cdots)$ that fail to be differentiable exactly at the endpoints of their natural $r$-intervals --- precisely the points where the level curve $|z|=r$ becomes tangent to a switching circle instead of crossing it transversely. Consequently, each application of Lemma \ref{lema2} or Lemma \ref{lema3} below is made on a closed subinterval $[a,b]$ chosen strictly inside the relevant open interval.

\subsection{An integral formula for circular arcs}
While the results of the previous subsection are used to count the zeros of a given linear combination of functions, the result of this subsection provides the fundamental method for computing that linear combination: a closed-form expression for an anti-derivative of the real part of a polynomial $1$-form along an arc of a circle centered at the origin. This result is used in all the Melnikov-function computations in Sections \ref{sec3}-\ref{sec5}.
\begin{lema}\label{lemaintegral}
Let $P(z)=\sum_{j=0}^{n}(a_{j}+ib_{j})z^{j}$ be a polynomial function. Consider the curve $\gamma :[a,b]\to \mathbb{C}$ given by $\gamma (t)=re^{it}$, where $r=\sqrt{2h}$ and $h>0$. Then, the following identity holds:
\begin{eqnarray*}
    \Re \left( \int_{\gamma }i\overline{P(z)}dz\right)=\bigg[ -a_{0} r \sin t + b_{0} r \cos t - a_{1} r^{2} t + \sum_{j=2}^{n} \frac{r^{j+1}}{j-1} C_{j}(t) \bigg]_{a}^{b},
\end{eqnarray*}
where $C_j(t) = -a_j \sin((j-1)t) - b_j \cos((j-1)t)$.
\end{lema}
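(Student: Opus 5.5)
This is a direct computation. Write $z=re^{it}$, so $dz=ire^{it}\,dt$, and expand
\[
\overline{P(z)}=\sum_{j=0}^{n}(a_j-ib_j)\,r^{j}e^{-ijt}.
\]
Then
\[
i\overline{P(z)}\,dz=\sum_{j=0}^{n} i\cdot i\,(a_j-ib_j)\,r^{j+1}e^{i(1-j)t}\,dt=-\sum_{j=0}^{n}(a_j-ib_j)\,r^{j+1}e^{i(1-j)t}\,dt .
\]
Each summand can be handled separately, since taking the real part commutes with integration over the real parameter $t\in[a,b]$.

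For each $j$, take the real part of $-(a_j-ib_j)e^{i(1-j)t}$:
\[
\Re\bigl(-(a_j-ib_j)(\cos((1-j)t)+i\sin((1-j)t))\bigr)=-a_j\cos((1-j)t)-b_j\sin((1-j)t).
\]
The integrand is therefore $r^{j+1}\bigl[-a_j\cos((j-1)t)+b_j\sin((j-1)t)\bigr]$. Now split into three cases.

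\begin{itemize}
\item For $j=0$ the integrand is $r(-a_0\cos t-b_0\sin t)$. Its antiderivative is $-a_0r\sin t+b_0r\cos t$.
\item For $j=1$ the integrand is the constant $-a_1r^2$. Its antiderivative is $-a_1r^2t$.
\item For $j\ge 2$ the antiderivative is
\[
\frac{r^{j+1}}{j-1}\bigl[-a_j\sin((j-1)t)-b_j\cos((j-1)t)\bigr]=\frac{r^{j+1}}{j-1}C_j(t).
\]
\end{itemize}

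Summing these antiderivatives and evaluating between $a$ and $b$ gives exactly the stated identity. The relation $r=\sqrt{2h}$ plays no role beyond fixing $r>0$.

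The only step needing care is the sign bookkeeping in the real part. In particular, $\cos$ is even and $\sin$ is odd, so $\sin((1-j)t)=-\sin((j-1)t)$. As a check, differentiate each bracket term back and compare it with the integrand. One can also confirm that the $j=0$ case matches Remark \ref{obs.2} with $B=a_0+ib_0$: here $\int(v\,dx-u\,dy)$ along the circle gives $\int(b_0\,dx-a_0\,dy)$, which is consistent with the antiderivative above.
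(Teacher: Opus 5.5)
Your proposal is correct and follows essentially the same route as the paper: parametrize $z=re^{it}$, reduce the integrand to $r^{j+1}\bigl[-a_j\cos((j-1)t)+b_j\sin((j-1)t)\bigr]$, and integrate term by term with the cases $j=0$, $j=1$, $j\ge 2$. The only cosmetic difference is that you take the real part at the integrand level, whereas the paper first splits the integrand into $A_j(t)+iB_j(t)$ and then discards the imaginary part; your sign bookkeeping and antiderivatives check out.
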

\begin{proof}
    First, note that 
\begin{eqnarray*}
i\overline{P(z)}=i\sum_{j=0}^{n} (a_{j}-ib_{j})\overline{z}^{j}=\sum_{j=0}^{n}(b_{j}+ia_{j})\overline{z}^{j}.
\end{eqnarray*}

We have
\begin{eqnarray*}
\int_{\gamma } i \overline{P} \, dz &=& \int_{\gamma } \sum_{j=0}^{n} (b_j + i a_j) \overline{z}^j \, dz \\
&=& \int_{a}^{b} \left( \sum_{j=0}^{n} (b_j + i a_j) r^j \overline{e^{ijt}} \right) i r e^{it} \, dt \\
&=& \int_{a}^{b} \left( \sum_{j=0}^{n} (-a_j + i b_j) r^{j+1} e^{-i(j-1)t} \right) dt.
\end{eqnarray*}

Before proceeding with the calculation of the integral above, we define some functions that will appear throughout the computation, in order to simplify the notation. These functions are given by
\begin{eqnarray*}
    A_j(t) &=& -a_j \cos((j-1)t) + b_j \sin((j-1)t), \\
    B_j(t) &=& a_j \sin((j-1)t) + b_j \cos((j-1)t). 
\end{eqnarray*}

Using this notation, it follows that
\begin{eqnarray*}
    (-a_{j} + i b_{j}) e^{-i(j-1)t} = A_{j}(t) + i B_{j}(t).
\end{eqnarray*}

As a consequence
\begin{align*}
\int_{\gamma } i\, \overline{P}\, dz 
&= \int_{a}^{b} 
\left( -a_{0} r \cos t - b_{0} r \sin t - a_{1} r^{2} 
+ \sum_{j=2}^{n} \left( r^{j+1}A_{j}(t )\right) \right) dt \\
&\quad + i  \int_{a}^{b}
\left( b_{0} r \cos t - a_{0} r \sin t + b_{1} r^{2} 
+ \sum_{j=2}^{n} \left( r^{j+1} B_{j}(t ) \right) \right) dt .
\end{align*}

Thus
\begin{eqnarray*}
\Re \left( \int_{\gamma } i \overline{P} \, dz \right) &=& \int_{a}^{b} \left( -a_{0} r \cos t - b_{0} r \sin t - a_{1} r^{2} + \sum_{j=2}^{n} r^{j+1} A_{j}(t) \right) dt \\
&=& \bigg[ -a_{0} r \sin t + b_{0} r \cos t - a_{1} r^{2} t + \sum_{j=2}^{n} \frac{r^{j+1}}{j-1} C_{j}(t) \bigg]_{a}^{b}.
\end{eqnarray*}
\end{proof}
\noindent\emph{General computational scheme.} Throughout Sections \ref{sec3}-\ref{sec5}, for a family of periodic orbits $L_{h}$ that crosses one or two circles, the Melnikov function is obtained and its zeros are then estimated by the same three-step scheme, which we record once here instead of repeating it in each proof:
\begin{enumerate}
    \item[(i)] \emph{Decomposition.} The curve $L_h$ is decomposed into its constituent circular arcs, each bounded by intersection angles determined by the geometry of the circles. The corresponding Melnikov function is then obtained as a sum of the integrals along each arc
    \begin{eqnarray*}
       \sum_{k}\Re\left( \int_{\mathrm{arc}_k}i\overline{h{_k}}dz \right)
    \end{eqnarray*}
    \item[(ii)] \emph{Evaluation.} Each term is computed in closed form via Lemma \ref{lemaintegral}, and the resulting expression is collected as a linear combination $\sum_{j}c_{j}f_{j}$ of a set $\mathcal{F}=\{f_{j}(r)\}$ built from the elementary functions $r^{k}\sin (m\theta_{r}^{k}), r^{k}\cos (m\theta_{r}^{k}), r^{2}$ and $r^{2}\theta_{r}^{k}$, where the coefficients $c_{j}$ are explicit linear functions of the free coefficients of $h_{1}, h_{2},h_{3}$.
    \item[(iii)] \emph{Zero Count.} Two approaches are used, depending on how much can be established about $\mathcal{F}$: if the Wronskians of $\mathcal{F}$ (Appendix \ref{app.w}) show that it is an ECT-system on the relevant $r$-interval, or that it satisfies the hypotheses of Lemma \ref{lema3}, then Lemma \ref{lema2} or Lemma \ref{lema3}, respectively, guarantees that some element of $\operatorname{Span}(\mathcal{F})$ realizes exactly any prescribed configuration of $m\leq n$ (respectively, $m\leq n+1$) simple zeros; if, on the other hand, only the linear independence of $\mathcal{F}$, together with the existence of a constant-sign element, can be established, then Lemma \ref{lem1} guarantees at least $n$ simple zeros.
\end{enumerate}

The three proofs below differ only in the specific circles crossed (hence in the angles $\theta_{r}^{j}$ and in which $h_{1},h_{2},h_{3}$ appear), so we present each case in full and keep the lengthy Wronskian verifications in Appendix \ref{app.w}.
\section{Proof of Theorem  \ref{teoa}}\label{sec3}
\renewcommand{\proofname}{Proof of statement (a) of Theorem \ref{teoa}}
\begin{proof}
Note that for $\varepsilon =0$, the system (\ref{eq.it1}) is Hamiltonian with
\begin{eqnarray*}
H_{1}(x,y)=H_{2}(x,y)=H_{3}(x,y)=\frac{x^{2}+y^{2}}{2}.
\end{eqnarray*}
Also note that the system (\ref{eq.it1}) has a family of periodic orbits $L_{h}=L_{h}^{1}\cup L_{h}^{2}\cup L_{h}^{3}\cup L_{h}^{4}$ for $h\in (\frac{1}{2},\frac{9}{2})$, where
\begin{eqnarray*}
& &L^{1}_{h}=\left\{ \sqrt{2h}e^{it}, \pi -\arccos \left( \frac{3+2h}{4\sqrt{2h}}\right) \leq t\leq \pi+\arccos\left( \dfrac{3+2h}{4\sqrt{2h}}\right)\right\} ,\\
& &L^{2}_{h}=\left\{ \sqrt{2h}e^{it},\pi +\arccos \left( \frac{3+2h}{4\sqrt{2h}}\right) \leq t\leq 2\pi -\arccos \left( \dfrac{3+2h}{4\sqrt{2h}}\right)\right\} ,\\
& &L^{3}_{h}=\left\{  \sqrt{2h}e^{it}, -\arccos \left( \frac{3+2h}{4\sqrt{2h}}\right) \leq t\leq  \arccos\left( \dfrac{3+2h}{4\sqrt{2h}}\right)\right\} ,\\
& &L^{4}_{h}=\left\{  \sqrt{2h}e^{it}, \arccos \left( \frac{3+2h}{4\sqrt{2h}}\right) \leq t\leq \pi - \arccos\left( \dfrac{3+2h}{4\sqrt{2h}}\right)\right\} .\\
\end{eqnarray*}
Furthermore, each of the periodic orbits $L_{h}$ intersects the circle $|z+2|=1$ at the points 
\begin{eqnarray*}
   A_{1}(h)= \left(\frac{-3-2h}{4},\sqrt{2h-\left(\frac{3+2h}{4}\right)^{2}}\right)\,   \hbox{and}\,  A_{2}(h)=\left(\frac{-3-2h}{4},-\sqrt{2h-\left(\frac{3+2h}{4}\right)^{2}}\right),
\end{eqnarray*}
 and the circle $|z-2|=1$ at the points
 \begin{eqnarray*}
    \tilde{A}_{1}(h)=\left(\frac{3+2h}{4},\sqrt{2h-\left(\frac{3+2h}{4}\right)^{2}}\right)\,   \hbox{and}\,  \tilde{A}_{2}(h)=\left(\frac{3+2h}{4},-\sqrt{2h-\left(\frac{3+2h}{4}\right)^{2}}\right).
\end{eqnarray*}
Therefore, Assumptions \ref{Assumption1} and \ref{Assumption2} are satisfied. According to Theorem \ref{teomelnikov}, to estimate the number of limit cycles bifurcating from the center $\dot{z}=iz$, we need to calculate the number of zeros of the Melnikov function, which can be expressed as
\begin{align*}
M(h) = &\; \Re \left(\int_{L_{h}^{1}}i\overline{h_{1}}\,dz\right)+\Re \left(\int_{L_{h}^{2}}i\overline{h_{2}}\,dz\right) +\Re \left(\int_{L_{h}^{3}}i\overline{h_{3}}\,dz\right)+\Re \left(\int_{L_{h}^{4}}i\overline{h_{2}}\,dz\right).
\end{align*}
We will compute each of  the integrals that appear in the expression above. To simplify the notation, let $r=\sqrt{2h}$ and $\theta_{r}=\arccos\left( \frac{3+2h}{4\sqrt{2h}}\right)$.
Using Lemma \ref{lemaintegral}, as well as the trigonometric identities $\sin (n(\pi-x))=(-1)^{n+1}\sin (nx)$,  $\sin (n(\pi+x))=(-1)^{n}\sin (nx),$ and $\cos (n(\pi-x))=(-1)^{n}\cos (nx)=\cos (n(\pi+x))$, we have 
\begin{eqnarray*}
\Re \left(\int_{L_{h}^{1}}i\overline{h_{1}}dz\right)=2a_{1_{0}}r\sin (\theta_{r})-2a_{1_{1}}r^{2}\theta_{r}-2\sum_{j=2}^{n}\frac{r^{j+1}}{j-1}(-1)^{j-1}a_{1_{j}}\sin ((j-1)\theta_{r}).
\end{eqnarray*}
Applying Lemma \ref{lemaintegral}, together with the trigonometric identities 
$\sin (n(2\pi-x))=-\sin(nx)$, $\cos(n(2\pi-x))=\cos(nx)$, $\sin(n(\pi+x))=(-1)^{n}\sin(nx),$ and $\cos(n(\pi+x))=(-1)^{n}\cos(nx)$, we obtain
\begin{eqnarray*}
    \Re \left(\int_{L_{h}^{2}}i\overline{h_{2}}dz\right)&=&2b_{2_{0}}r\cos (\theta_{r})-a_{2_{1}}r^{2}(\pi -2\theta_{r})+2\sum_{j=1}^{\lfloor \frac{n - 1}{2} \rfloor}\frac{r^{2j+2}}{2j}a_{2_{2j+1}}\sin (2j\theta_{r})\\
    &&-\ 2\sum_{j=1}^{\lfloor \frac{n }{2} \rfloor}\frac{r^{2j+1}}{2j-1}b_{2_{2j}}\cos((2j-1)\theta_{r}).
\end{eqnarray*}

Now, using Lemma \ref{lemaintegral} and the trigonometric identities $\sin(-jx)=-\sin(jx)$ and $\cos(-jx)=\cos(jx)$, we obtain
\begin{eqnarray*}
     \Re \left(\int_{L_{h}^{3}}i\overline{h_{3}}dz\right)=-2a_{3_{0}}r\sin(\theta_{r})-2a_{3_{1}}r^{2}\theta_{r}-2\sum_{j=2}^{n}\frac{r^{j+1}}{j-1}a_{3_{j}}\sin((j-1)\theta_{r}).
\end{eqnarray*}

Finally, applying Lemma \ref{lemaintegral}, once again with the identities $\sin(n(\pi-x))=(-1)^{n+1}\sin (nx)$ and $\cos(n(\pi-x))=(-1)^{n}\cos (nx)$, we find
\begin{eqnarray*}
     \Re \left(\int_{L_{h}^{4}}i\overline{h_{2}}dz\right)&=&-2b_{20}r\cos (\theta_{r})-a_{21}r^{2}(\pi -2\theta_{r})+2\sum_{j=1}^{\lfloor \frac{n - 1}{2} \rfloor}\frac{r^{2j+2}}{2j}a_{2_{2j+1}}\sin (2j\theta_{r})\\
    &&+\ 2\sum_{J=1}^{\lfloor \frac{n }{2} \rfloor}\frac{r^{2j+1}}{2j-1}b_{2_{2j}}\cos((2j-1)\theta_{r}).
\end{eqnarray*}

Therefore
\begin{eqnarray*}
     M(r)=c_{0}r\sin (\theta_{r})+c_{1}r^{2}+d_{1}r^{2}\theta_{r}+\sum_{j=2}^{n}c_{j}r^{j+1}\sin((j-1)\theta_{r}),
\end{eqnarray*}
where
\begin{eqnarray*}
    c_{0} &=& 2a_{1_{0}} - 2a_{3_{0}}, \quad 
    c_{1} = -2\pi a_{2_{1}}, \quad 
    d_{1} = -2a_{1_{1}} + 4a_{2_{1}} - 2a_{3_{1}}, \\[6pt]
    c_{j} &=&
    \begin{cases}
        \frac{2}{j-1}(a_{1_{j}}-a_{3_{j}}), & j \text{ even },\\[8pt]
        \frac{2}{j-1}\left( -a_{1_{j}}+2a_{2_{j}} - a_{3_{j}}\right), & j \text{ odd }.
    \end{cases}
\end{eqnarray*}

In particular, when $n=4$,
\begin{eqnarray*}
    M(r)=c_{0} r\sin(\theta_{r}) + c_{1} r^{2} + d_{1} r^{2} \theta_{r}
 + c_{2} r^{3} \sin(\theta_{r}) + c_{3} r^{4} \sin(2\theta_{r}) + c_{4} r^{5}\sin(3\theta_{r})
\end{eqnarray*}
where
\[
\begin{aligned}
    c_{0} &= 2a_{1_{0}} - 2a_{3_{0}}, & \quad
    c_{1} &= -2\pi a_{2_{1}}, & \quad
    d_{1} &= -2a_{1_{1}} + 4a_{2_{1}} - 2a_{3_{1}}, \\
    c_{2} &= 2a_{1_{2}} - 2a_{3_{2}}, & \quad
    c_{3} &= -a_{1_{3}} + 2a_{2_{3}} - a_{3_{3}}, & \quad
    c_{4} &= \frac{2}{3} a_{1_{4}} - \frac{2}{3} a_{3_{4}}.
\end{aligned}
\]

Consider the ordered set of functions
\begin{eqnarray*}
    \mathcal{F}=\{ r\sin (\theta_{r}),r^3\sin(\theta_{r}),r^{4}\sin(2\theta_{r}),r^{5}\sin(3\theta_{r}),r^{2},r^{2}\theta_{r}\}.
\end{eqnarray*}
Note that $\theta_{r}$ fails to be differentiable at $r=1$ and $r=3$, where the level curve $|z|=r$ becomes tangent to the circle $|z+2|=1$; thus $\mathcal{F}$ is analytic on the open interval $(1,3)$, and Lemma \ref{lema3} must be invoked on a closed subinterval of it. Fix $[a,b]=[1.05,2.99]\subset(1,3)$. The Wronskians of $\mathcal{F}$ are computed in Appendix \ref{app.w} (Case 1): $W_{0},W_{1},W_{2},W_{3},W_{4}$ are nonzero throughout $(1,3)$, in particular on $[1.05,2.99]$, while $W_{5}$ has a single simple zero at $r\approx1.90012\in(1.05,2.99)$. Therefore, Lemma \ref{lema3}, applied on $[1.05,2.99]$, guarantees that $Z(\mathcal{F})=6$, and every configuration of $m\leq6$ simple zeros in $[1.05,2.99]$ is realized by some element of $\operatorname{Span}(\mathcal{F})$. Since the coefficients $c_{0},c_{1},d_{1},c_{2},c_{3},c_{4}$ can be chosen independently through the free parameters $a_{1_{j}},a_{2_{j}},a_{3_{j}}$, any such configuration can be realized by the perturbation. Hence, system (\ref{eq.it1}) has at least $6$ limit cycles bifurcating from the periodic orbits of the center $\dot{z}=iz$.
\end{proof}

\renewcommand{\proofname}{Proof of statement (b) of Theorem \ref{teoa}}
\begin{proof}
The following system
 \begin{eqnarray}\label{eq.ex.teoaa}
\begin{cases}
\dot{z}=iz+\varepsilon  h_{1}(z) \quad \mbox{if} \quad |z+2|<1,\\
\dot{z}=iz+\varepsilon h_{2}(z) \quad \mbox{if} \quad |z+2|>1 \quad \hbox{and }\quad |z-2|>1  \\
\dot{z}=iz+\varepsilon h_{3}(z) \quad \mbox{if} \quad |z-2|<1,
\end{cases}
\end{eqnarray}
has at least $6$ limit cycles for sufficiently small $\varepsilon $, where
\begin{eqnarray*}
    h_{1}(z)&=&\left( -\frac{1}{2}-\frac{0.0188988}{\pi}\right)z-0.02159415z^{4},\\
    h_{2}(z)&=&\left( \frac{-0.0188988}{2\pi}+2i\right)z+0.0713325z^{3},\\
    h_{3}(z)&=&0.2769615+0.391719z^{2}.
\end{eqnarray*}

Indeed, note that the first-order Melnikov function associated with this system is given by
\begin{align*}
M(r) =\;& 
    -0.553923\, r\sin(\theta_{r}) 
    +0.0188988\,r^{2} 
    + r^{2} \theta_{r} \\
    & -0.783438\,r^{3} \sin(\theta_{r}) 
    +0.142665\,r^{4} \sin(2\theta_{r}) 
    -0.0143961\,r^{5} \sin(3\theta_{r})
\end{align*}
and $M(r)$ has the six zeros
\begin{eqnarray*}
    r_{1}\approx 1.06, \quad r_{2}\approx 1.5, \quad r_{3}\approx 1.7, \quad r_{4} \approx 1.95, \quad r_{5}\approx 2.5, \quad r_{6}\approx  2.9.
\end{eqnarray*}

Moreover, differentiating $M(r),$ one finds that $M'(r)$ vanishes on $(1,3)$ at
\begin{eqnarray*}
    r\approx 1.17768, \quad 1.58774,\quad 1.84206, \quad 2.31754, \quad 2.78804,
\end{eqnarray*}
none of which coincides with $r_{1},\cdots , r_{6}$ above. Hence, $M'(r_{j})\neq 0$ for $j=1,\cdots ,6$, so each $r_{j}$ is a simple zero of $M(r)$. Therefore, for $\varepsilon $ sufficiently small, the system (\ref{eq.ex.teoaa}) has at least six limit cycles.
\begin{figure}[!htb]
\centering
\includegraphics[scale=0.68]{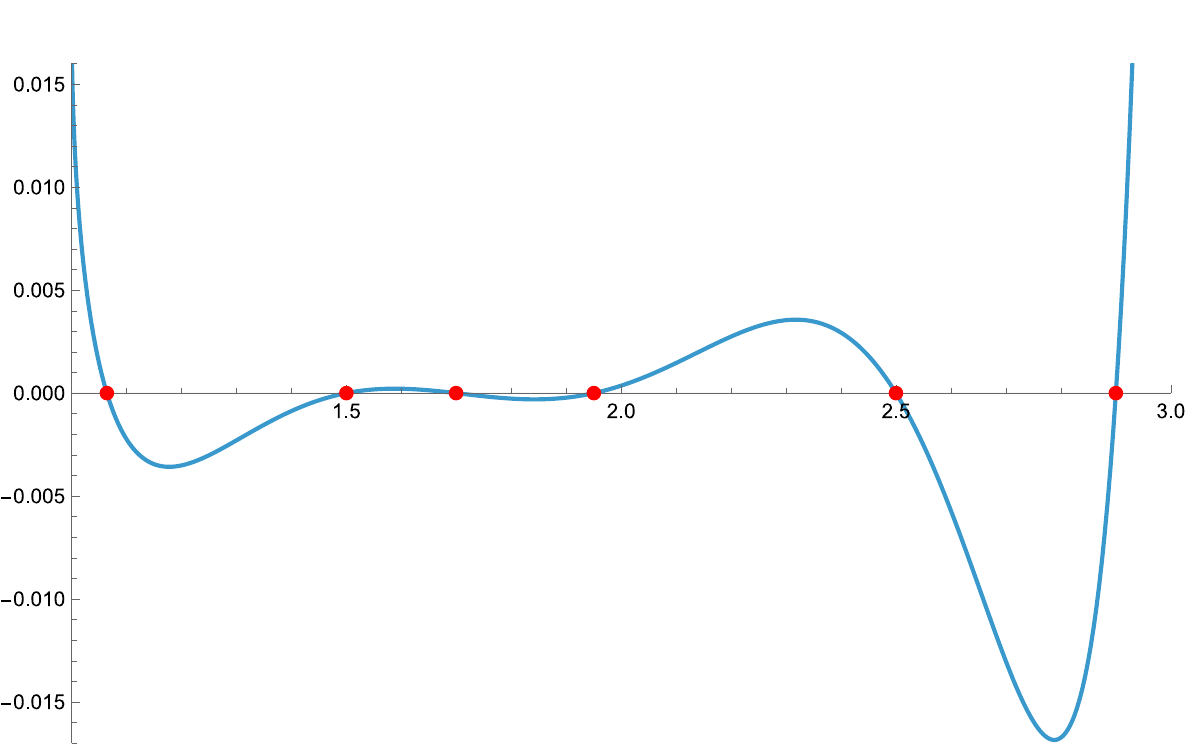}
\caption{Graph of the Melnikov function $M(r)$ associated with system (\ref{eq.ex.teoaa}). }
\label{imgteoa}
\end{figure}
\end{proof}

\section{Proof of Theorem  \ref{teob}}\label{sec4}
\renewcommand{\proofname}{Proof of statement (a) of Theorem \ref{teob}}
\begin{proof}
In this case, we have three distinct families of periodic orbits. The first one crosses only the circle  $|z-2|=\sqrt{2}$, denoted by $l_{h}=l_{h}^{3}\cup l_{h}^{2}$
for $h\in ( 3-2\sqrt{2},\frac{1}{2})$, where
\begin{eqnarray*}
& &l^{2}_{h}=\left\{ \sqrt{2h}e^{it}, \arccos \left( \frac{1+h}{2\sqrt{2h}}\right) \leq t\leq 2\pi-\arccos\left( \dfrac{1+h}{2\sqrt{2h}}\right)\right\} ,\\
& &l^{3}_{h}=\left\{ \sqrt{2h}e^{it},-\arccos \left( \frac{1+h}{2\sqrt{2h}}\right) \leq t\leq \arccos \left( \dfrac{1+h}{2\sqrt{2h}}\right)\right\}.
\end{eqnarray*}

The third family also crosses only the circle $|z-2|=\sqrt{2}$ and has the same expression as the previous one, but is defined on a different interval, with $h\in (\frac{9}{2},3+2\sqrt{2})$.
The second family intersects both circles $|z-2|=\sqrt{2}$ and $|z+2|=1$ and is given by  $L_{h}=L_{h}^{1}\cup L_{h}^{2}\cup L_{h}^{3}\cup L_{h}^{4}$ for $h\in (\frac{1}{2},\frac{9}{2})$, where 
\begin{eqnarray*}
& &L^{1}_{h}=\left\{ \sqrt{2h}e^{it}, \pi -\arccos \left( \frac{3+2h}{4\sqrt{2h}}\right) \leq t\leq \pi+\arccos\left( \dfrac{3+2h}{4\sqrt{2h}}\right)\right\} ,\\
& &L^{2}_{h}=\left\{ \sqrt{2h}e^{it},\pi +\arccos \left( \frac{3+2h}{4\sqrt{2h}}\right) \leq t\leq 2\pi -\arccos \left( \dfrac{1+h}{2\sqrt{2h}}\right)\right\} ,\\
& &L^{3}_{h}=\left\{  \sqrt{2h}e^{it}, -\arccos \left( \frac{1+h}{2\sqrt{2h}}\right) \leq t\leq  \arccos\left( \dfrac{1+h}{2\sqrt{2h}}\right)\right\} ,\\
& &L^{4}_{h}=\left\{  \sqrt{2h}e^{it}, \arccos \left( \frac{1+h}{2\sqrt{2h}}\right) \leq t\leq \pi - \arccos\left( \dfrac{3+2h}{4\sqrt{2h}}\right)\right\} .\\
\end{eqnarray*}

Note that the first and third families of periodic orbits intersect the curve $|z-2|=\sqrt{2}$ at two points
\begin{eqnarray*}
    a_{1}(h)=\left( \frac{h+1}{2},\sqrt{\frac{-h^{2}+6h-1}{4}}\right)\quad \hbox{and}\quad  a_{2}(h)=\left( \frac{h+1}{2},-\sqrt{\frac{-h^{2}+6h-1}{4}}\right).
\end{eqnarray*}

The second family of periodic orbits, in turn, intersects the circle $|z-2|=\sqrt{2}$ at the points 
\begin{eqnarray*}
   A_{1}(h)=\left( \frac{h+1}{2},\sqrt{\frac{-h^{2}+6h-1}{4}}\right) \quad \hbox{and}\quad A_{2}(h)=\left( \frac{h+1}{2},-\sqrt{\frac{-h^{2}+6h-1}{4}}\right),
\end{eqnarray*}
 and the circle $|z+2|=1$ at the points 
 \begin{eqnarray*}
     \tilde{A}_{1}(h)=\left(\frac{-3-2h}{4},\sqrt{2h-\left(\frac{3+2h}{4}\right)^{2}}\right) \quad \hbox{and}\quad \tilde{A}_{2}(h)=\left(\frac{-3-2h}{4},-\sqrt{2h-\left(\frac{3+2h}{4}\right)^{2}}\right)
 \end{eqnarray*}
 
Therefore, the Assumptions \ref{Assumption1} and \ref{Assumption2} of Theorem \ref{teomelnikov} are satisfied  by the family of orbits that cross both circles $|z-2|=\sqrt{2}$ and $|z+2|=1$, whereas the families of orbits that cross only $|z-2|=\sqrt{2}$ satisfy Assumptions $(1)$ and $(2)$ of  Remark \ref{obs.1}.
For the families that cross only $|z-2|=\sqrt{2}$, it follows that the Melnikov function is given by:
\begin{eqnarray*}
    m(h)=\Re \left( \int_{l_{h}^{2}}i\overline{h_{2}(z)}dz\right)+\Re \left( \int_{l_{h}^{3}}i\overline{h_{3}(z)}dz\right).
\end{eqnarray*}

For simplicity, we define $r=\sqrt{2h}$, $\theta_{r}^{1}=\arccos \left( \frac{1+h}{2\sqrt{2h}}\right)=\arccos \left( \frac{1+r^{2}/2}{2r}\right)$ and $\theta_{r}^{2}=\arccos \left( \frac{3+2h}{4\sqrt{2h}}\right)=\arccos \left( \frac{3+r^{2}}{4r}\right)$.
By applying Lemma \ref{lemaintegral} along with the trigonometric identities $\cos(-nx)=\cos(nx)=\cos(n(2\pi-x))$ and $\sin(-nx)=-\sin(nx)=\sin (n(2\pi -x))$, we deduce that 
\begin{eqnarray*}
    \Re \left( \int_{l_{h}^{2}}i\overline{h_{2}(z)}dz\right)=2a_{2_{0}}r\sin (\theta_{r}^{1})-a_{2_{1}}r^{2}(2\pi -2\theta_{r}^{1})+2\sum_{j=2}^{n}\frac{r^{j+1}}{j-1}a_{2_{j}}\sin((j-1)\theta_{r}^{1})
\end{eqnarray*}
and
\begin{eqnarray*}
    \Re \left( \int_{l_{h}^{3}}i\overline{h_{3}(z)}dz\right)=-2a_{3_{0}}r\sin(\theta_{r}^{1})-2a_{3_{1}}r^{2}\theta_{r}^{1}-2\sum_{j=2}^{n}\frac{r^{j+1}}{j-1}a_{3_{j}}\sin((j-1)\theta_{r}^{1}).
\end{eqnarray*}

Therefore
\begin{eqnarray*}
    m(r)=c_{0}r\sin(\theta_{r}^{1})+c_{1}r^{2}\theta_{r}^{1}+d_{1}r^{2}+\sum_{j=2}^{n}c_{j}r^{j+1}\sin ((j-1)\theta_{r}^{1}),
\end{eqnarray*}
where
\begin{align*}
c_0 &= 2a_{2_0} - 2a_{3_0}, & c_1 &= 2a_{2_1}-2a_{3_1}, \\
d_1 &= -2\pi a_{2_1}, & c_j &= 2\left(\frac{a_{2_j} - a_{3_j}}{j - 1}\right)
\end{align*}

In the specific case when $n=4$,
\begin{eqnarray*}
    m(r) &=&c_0 r \sin(\theta_r^1) + c_1 r^2 \theta_r^1 + d_1 r^2  
+ c_2 r^3 \sin(\theta_r^1) + c_3 r^4 \sin(2 \theta_r^1) + c_4 r^5 \sin(3 \theta_r^1).
\end{eqnarray*}

Consider the ordered set of functions
\begin{eqnarray*}
    \mathcal{F}=\{ r\sin (\theta_{r}^{1}),r^3\sin(\theta_{r}^{1}),r^{4}\sin(2\theta_{r}^{1}),r^{5}\sin(3\theta_{r}^{1}),r^{2},r^{2}\theta_{r}^{1}\}.
\end{eqnarray*}

Note that $\theta_{r}^{1}$ fails to be differentiable at $r=\sqrt{2(3-2\sqrt{2})}$ and $r=\sqrt{2(3+2\sqrt{2})}$, the endpoints of these two intervals, where the level curve becomes tangent to $|z-2|=\sqrt{2}$; so $\mathcal{F}$ is analytic only on the open intervals, and Lemma \ref{lema2} must be applied on closed subintervals of them. Fix $[a_{1},b_{1}]=[0.589,0.99]\subset\left(\sqrt{2(3-2\sqrt{2})},1\right)$ and $[a_{2},b_{2}]=[3.01,3.41]\subset\left(3,\sqrt{2(3+2\sqrt{2})}\right)$. By Appendix \ref{app.w} (Case 2), all six Wronskians $W_{0},\cdots , W_{5}$ are nonzero throughout $\left(\sqrt{2(3-2\sqrt{2})},1\right)$ and $\left(3,\sqrt{2(3+2\sqrt{2})} \right)$, in particular on $[a_{1},b_{1}]$ and $[a_{2},b_{2}]$, so $\mathcal{F}$ is an ECT-system on each of these closed intervals. Therefore, Lemma \ref{lema2} ensures that the number of simple zeros of any element of $\operatorname{Span}(\mathcal{F})$ on $[a_{1},b_{1}]$ (resp. $[a_{2},b_{2}]$) does not exceed $5$, and every configuration of $m\leq 5$ prescribed simple zeros in $[a_{1},b_{1}]$ (resp. $[a_{2},b_{2}]$) is realized by some element of $\operatorname{Span}(\mathcal{F})$. Since the coefficients $c_{0},c_{1},d_{1},c_{2},c_{3},c_{4}$ are linearly independent functions of the free perturbation parameters $a_{2_{j}},a_{3_{j}}$, such a configuration can indeed be realized by a suitable choice of $h_{2},h_{3}$. Hence, system (\ref{eq.it2}) has at least $5$ limit cycles bifurcating from the center $\dot{z}=iz$, which intersect only the circle $|z-2|=\sqrt{2}$.
Now, for the family of orbits that crosses both circles, the expression of the Melnikov function is given by
\begin{align*}
M(h) = &\; \Re \left(\int_{L_{h}^{1}}i\overline{h_{1}}\,dz\right)+\Re \left(\int_{L_{h}^{2}}i\overline{h_{2}}\,dz\right) +\Re \left(\int_{L_{h}^{3}}i\overline{h_{3}}\,dz\right)+\Re \left(\int_{L_{h}^{4}}i\overline{h_{2}}\,dz\right).
\end{align*}

Applying Lemma \ref{lemaintegral}, we have that
\begin{eqnarray*}
    \Re \left(\int_{L_{h}^{1}}i\overline{h_{1}}dz\right)&=&2a_{1_0}r\sin (\theta_{r}^{2})-2a_{1_1}r^{2}\theta_{r}^{2}-2\sum_{j=2}^{n}\frac{r^{j+1}}{j-1}(-1)^{j-1}a_{1_j}\sin ((j-1)\theta_{r}^{2}),\\
    \Re \left(\int_{L_{h}^{2}}i\overline{h_{2}}dz\right)&=&a_{2_{0}}r\left( \sin (\theta_{r}^{1})-\sin(\theta_{r}^{2})\right) +b_{2_{0}}r\left( \cos(\theta_{r}^{1})+\cos(\theta_{r}^{2})\right)+a_{2_{1}}r^{2}\left( -\pi +\theta_{r}^{1}+\theta_{r}^{2}\right) \\
    &+&\sum_{j=2}^{n}\frac{a_{2_{j}}r^{j+1}}{j-1}\left( \sin((j-1)\theta_{r}^{1})+(-1)^{j-1}\sin((j-1)\theta_{r}^{2})\right)\\
    &+&\sum_{j=2}^{n}\frac{b_{2_{j}}r^{j+1}}{j-1}\left( -\cos((j-1)\theta_{r}^{1})+(-1)^{j-1}\cos((j-1)\theta_{r}^{2})\right)\\
    \Re \left(\int_{L_{h}^{3}}i\overline{h_{3}}dz\right)&=&-2a_{3_{0}}r\sin(\theta_{r}^{1})-2a_{3_{1}}r^{2}\theta_{r}^{1}-2\sum_{j=2}^{n}\frac{r^{j+1}}{j-1}a_{3_{j}}\sin((j-1)\theta_{r}^{1})\\
    \Re \left(\int_{L_{h}^{4}}i\overline{h_{2}}dz\right)&=&a_{2_{0}}r\left(\sin (\theta_{r}^{1})-\sin(\theta_{r}^{2}) \right)-b_{2_{0}}r\left( \cos (\theta_{r}^{1})+\cos(\theta_{r}^{2})\right)+a_{2_{1}}r^{2}\left( -\pi +\theta_{r}^{1}+\theta_{r}^{2} \right)\\
   &+& \sum_{j=2}^{n}\frac{a_{2_{j}}r^{j+1}}{j-1}\left( \sin((j-1)\theta_{r}^{1})+(-1)^{j-1}\sin((j-1)\theta_{r}^{2})\right)\\
    &-&\sum_{j=2}^{n}\frac{b_{2_{j}}r^{j+1}}{j-1}\left( -\cos((j-1)\theta_{r}^{1})+(-1)^{j-1}\cos((j-1)\theta_{r}^{2})\right)
\end{eqnarray*}

Therefore,
\begin{eqnarray*}
    M(r)&=&(2a_{2_{0}}-2a_{3_{0}})r\sin (\theta_{r}^{1})+(2a_{1_{0}}-2a_{2_{0}})r\sin (\theta_{r}^{2})+(2a_{2_{1}}-2a_{3_{1}})r^{2}\theta_{r}^{1}\\
    &+&(2a_{2_{1}}-2a_{1_{1}})r^{2}\theta_{r}^{2}+(-2\pi a_{2_1})r^{2}+\sum_{j=2}^{n}\left( \frac{2a_{2_{j}}-2a_{3_{j}}}{j-1}\right) r^{j+1}\sin ((j-1)\theta_{r}^{1})\\
    &+&\sum_{j=2}^{n}\left( \frac{(-1)^{j-1}(2a_{2_{j}}-2a_{1_{j}})}{j-1}\right) r^{j+1}\sin ((j-1)\theta_{r}^{2}).
\end{eqnarray*}

In particular, when $n=4$, we have
\begin{eqnarray*}
    M(r)=\sum_{j=1}^{11}A_{j}f_{j}(r)
\end{eqnarray*}
where
\[
\begin{aligned}
f_{1}(r) &= r\sin(\theta_{r}^{1}), &\quad 
f_{2}(r) &= r\sin(\theta_{r}^{2}), &\quad 
f_{3}(r) &= r^{2}\theta_{r}^{1}, \\
f_{4}(r) &= r^{2}\theta_{r}^{2}, &\quad 
f_{5}(r) &= r^{2}, &\quad 
f_{6}(r) &= r^{3}\sin(\theta_{r}^{1}), \\
f_{7}(r) &= r^{3}\sin(\theta_{r}^{2}), &\quad 
f_{8}(r) &= r^{4}\sin(2\theta_{r}^{1}), &\quad 
f_{9}(r) &= r^{4}\sin(2\theta_{r}^{2}), \\
f_{10}(r) &= r^{5}\sin(3\theta_{r}^{1}), &\quad 
f_{11}(r) &= r^{5}\sin(3\theta_{r}^{2}), &
\end{aligned}
\]
and 
\[
\begin{aligned}
A_{1} &= (2a_{2_{0}} - 2a_{3_{0}}), &\quad 
A_{2} &= (2a_{1_{0}} - 2a_{2_{0}}), &\quad 
A_{3} &= (2a_{2_{1}} - 2a_{3_{1}}), \\
A_{4} &= (2a_{2_{1}} - 2a_{1_{1}}), &\quad 
A_{5} &= (-2\pi a_{2_1}), &\quad 
A_{6} &= (2a_{2_{2}} - 2a_{3_{2}}), \\
A_{7} &= -(2a_{2_{2}} - 2a_{1_{2}}), &\quad 
A_{8} &= \frac{(2a_{2_{3}} - 2a_{3_{3}})}{2}, &\quad 
A_{9} &= \frac{(2a_{2_{3}} - 2a_{1_{3}})}{2}, \\
A_{10} &= \frac{(2a_{2_{4}} - 2a_{3_{4}})}{3}, &\quad 
A_{11} &= -\frac{(2a_{2_{4}} - 2a_{1_{4}})}{3}. &
\end{aligned}
\]

We have $W(f_{1},\cdots ,f_{11})(\frac{5}{2})\approx 9.979093 \times10^{25}\neq 0$. Therefore, the functions $f_{1},\cdots ,f_{11}$ are linearly independent. In addition, $f_{3}$ maintains a constant sign over the interval $(1 ,3)$, and
the coefficients  $A_{1},\cdots ,A_{11}$ can be chosen arbitrarily: as displayed above, each $A_{j}$ is an explicit linear function of the free real parameters $a_{1_{j}},a_{2_{j}},a_{3_{j}}$ of $h_{1},h_{2},h_{3}$. Consequently, by Lemma \ref{lem1}(b), it follows that one can select these coefficients so that $M(r)$ has at least $10$ simple zeros. Hence, system (\ref{eq.it2}) has at least $10$ limit cycles bifurcating from the center $\dot{z}=iz$, which intersect both $|z+2|=1$ and $|z-2|=\sqrt{2}$.
\end{proof}

\renewcommand{\proofname}{Proof of statement (b) of Theorem \ref{teob}}
\begin{proof}
    The following system
 \begin{eqnarray}\label{eq.ex.teobb}
\begin{cases}
\dot{z}=iz+\varepsilon h_{1}(z) \quad \mbox{if} \quad |z+2|<1,\\
\dot{z}=iz+\varepsilon  h_{2}(z) \quad \mbox{if} \quad |z+2|>1 \quad \hbox{and }\quad |z-2|>\sqrt{2}  \\
\dot{z}=iz+\varepsilon  h_{3}(z) \quad \mbox{if} \quad |z-2|<\sqrt{2},
\end{cases}
\end{eqnarray}
has at least $16$ limit cycles for sufficiently small $\varepsilon $, where
\begin{eqnarray*}
    h_{1}(z)&=& -0.0504822672096+0.4539003764375z+0.0778357303430z^2\\
     &\quad &+0.4231906218954z^3-0.0000601091140z^{4},\\
    h_{2}(z)&=&-0.1422087015474-0.0008344860991z-0.5z^{2}\\
    &\quad &+0.1984483885666z^{3}-0.0265841713573z^{4},\\
    h_{3}(z)&=&-0.4560166059445z
\end{eqnarray*}

Indeed, as we have seen earlier, when $\varepsilon =0$, the system has three families of periodic orbits. Thus, the first-order Melnikov function associated with (\ref{eq.ex.teobb}) and corresponding to the first family is given by
\begin{align*}
m_{1}(r) =\;& 
    -0.28441740309482\, r\sin(\theta_{r}^{1}) 
    + 0.0052432307972058\,r^{2} 
    + 0.91036423969070\, r^{2} \theta_{r}^{1} \\
    &-\,r^{3} \sin(\theta_{r}^{1}) 
    + 0.19844838856661\,r^{4} \sin(2\theta_{r}^{1}) 
   -0.017722780904860\,r^{5} \sin(3\theta_{r}^{1}),
\end{align*}
and $m_{1}(r)$ has  three zeros
\begin{eqnarray*}
    r_{1}\approx  0.599630, \quad r_{2}\approx 0.634593, \quad r_{3}\approx  0.961079.
\end{eqnarray*}

Differentiating $m_{1}(r)$, we find that $m_{1}'(r)$ vanishes on $\left(\sqrt{2(3-2\sqrt{2})},1 \right)$  at $r\approx  0.614834$ and $r\approx 0.827937$, distinct from $r_{1},r_{2},r_{3}$ above. Hence, $m'_{1}(r_{j})\neq 0$ for $j=1,2,3$, so all three zeros of $m_{1}$ are simple.
The first-order Melnikov function associated with system (\ref{eq.ex.teobb}) and corresponding to the third family of periodic orbits, $m_{2}$, has the same expression as $m_{1}$. However, it is defined on the interval $\left(3,\sqrt{2(3+2\sqrt{2})} \right)$. In this interval, it has three zeros
\begin{eqnarray*}
    r_{1}\approx  3.013205, \quad r_{2}\approx 3.138776, \quad r_{3}\approx  3.401237.
\end{eqnarray*}

Likewise, on this interval $m'_{2}$ vanishes at $r\approx 3.075700$ and $r\approx 3.332528$, distinct from $r_{1},r_{2},r_{3}$ above. Hence, all three zeros of $m_{2}$ are simple.
\begin{figure}[!htb]
\centering
\begin{minipage}{0.48\textwidth}
    \centering
    \includegraphics[width=\textwidth]{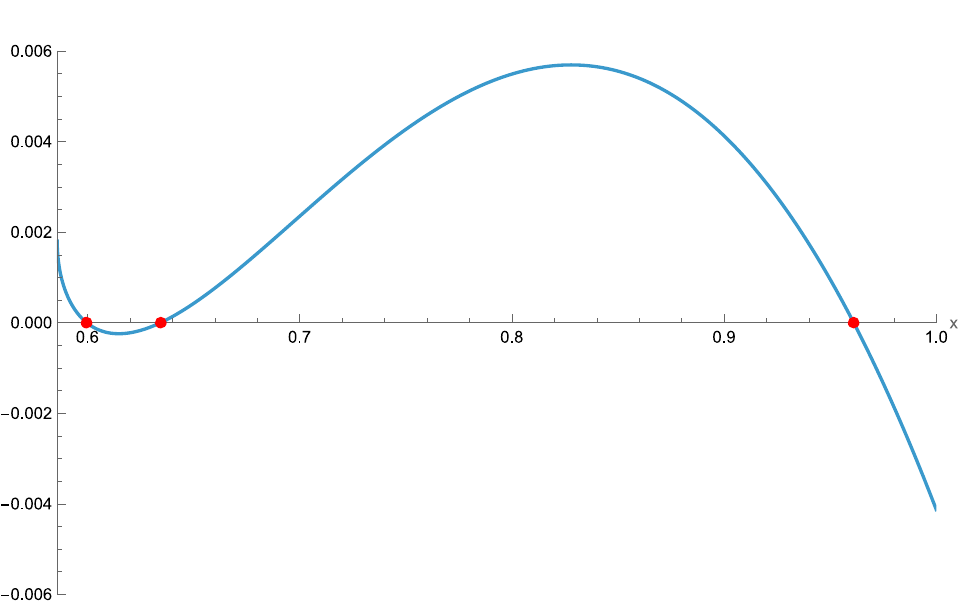}
\end{minipage}
\hspace{-0.3cm}
\begin{minipage}{0.48\textwidth}
    \centering
    \includegraphics[width=\textwidth]{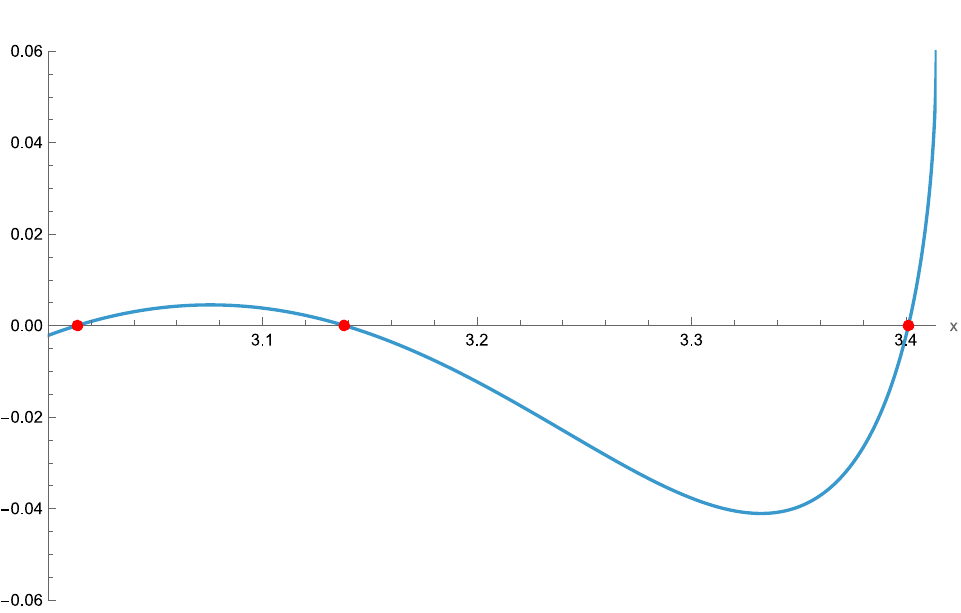}
\end{minipage}
\caption{Graph of the Melnikov functions $m_1(r)$ and $m_{2}(r)$ associated with system (\ref{eq.ex.teobb}).}
\end{figure}
Finally, the first-order Melnikov function corresponding to the second family is given by
\begin{align*}
M(r) =\;& 
    -0.28441740309482\, r\sin(\theta_{r}^{1}) +  0.18345286867571\, r\sin(\theta_{r}^{2}) + 0.91036423969070 r^{2} \theta_{r}^{1}\\
   & -0.90946972507325r^{2} \theta_{r}^{2}+ 0.0052432307972058\,r^{2} 
     \\
    &-\,r^{3} \sin(\theta_{r}^{1}) + 1.15567146068591\,r^{3} \sin(\theta_{r}^{2})\\
    &+0.1984483885666\,r^{4} \sin(2\theta_{r}^{1}) - 0.22474223332883\,r^{4} \sin(2\theta_{r}^{2})\\
   & -0.017722780904860\,r^{5} \sin(3\theta_{r}^{1})+ 0.017682708162225\,r^{5} \sin(3\theta_{r}^{2})
\end{align*}
and it has ten zeros
\begin{eqnarray*}
    r_{1}& \approx & 1.304571, \quad r_{2}\approx 1.717150, \quad r_{3}\approx 2.040756, \quad r_{4} \approx  2.196555, \quad r_{5}\approx 2.329536,
    \\ r_{6}&\approx &2.409378, \quad r_{7}\approx 2.473021, \quad r_{8}\approx 2.684530, \quad r_{9} \approx 2.756957, \quad r_{10}\approx 2.851674. 
\end{eqnarray*}

Moreover, differentiating $M(r)$, one finds that $M'(r)$ vanishes on $(1,3)$  at the nine points 
\begin{eqnarray*}
    r& \approx &  1.391439,\;  1.803607,\; 2.095536,\; 2.250281,\; 2.367641,\\
     r& \approx &2.446814,\; 2.618856,\; 2.729326,\; 2.828435,
\end{eqnarray*}
none of which coincides with  $r_{1},\cdots , r_{10}$ above. Hence, $M'(r_{j})\neq0 $ for $j=1,\cdots, 10$, so all ten zeros of $M(r)$ are simple.
Therefore, for $\varepsilon $ sufficiently small, the system (\ref{eq.ex.teobb}) has at least sixteen limit cycles.
\begin{figure}[!htb]
\centering
\includegraphics[scale=0.68]{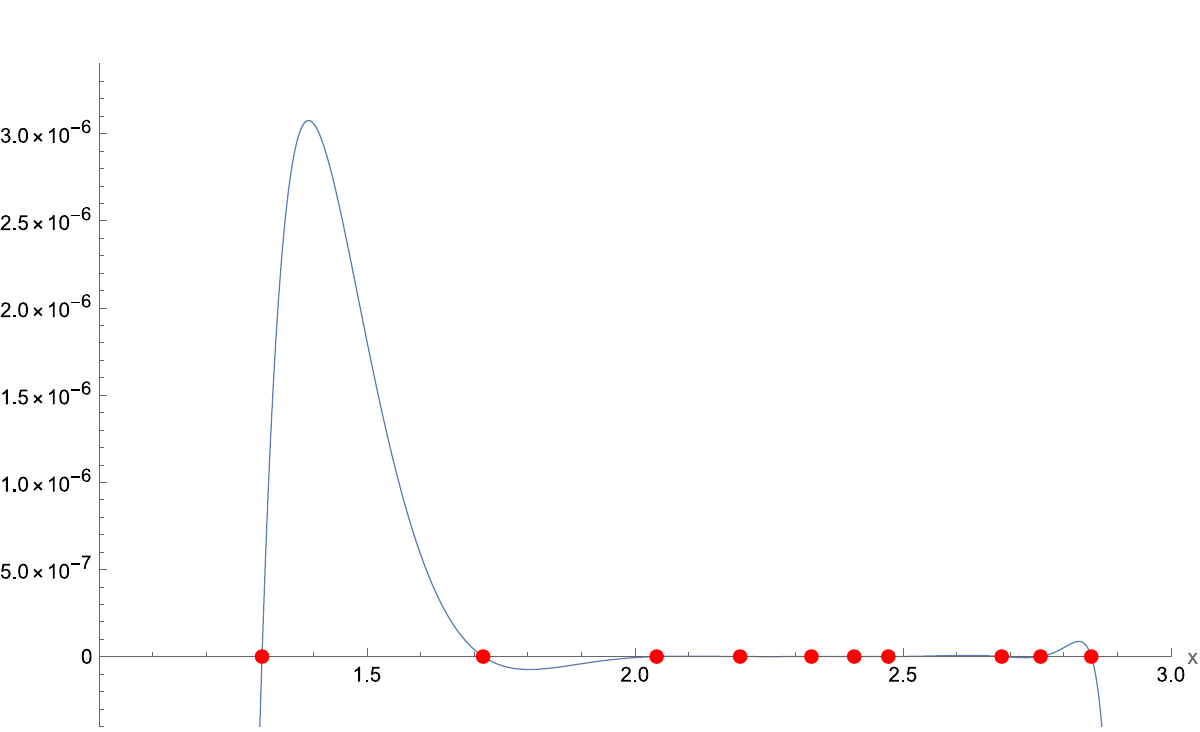}
\caption{Graph of the Melnikov function $M(r)$ associated with system (\ref{eq.ex.teobb}). }
\label{imgteob}
\end{figure}
\end{proof}

\section{Proof of Theorem  \ref{teoc}}\label{sec5}

\renewcommand{\proofname}{Proof of statement (a) of Theorem \ref{teoc}}
\begin{proof}
Note that we have three families of periodic orbits. The first family crosses only the circle $|z+2|=1$ and is denoted by $l_{h}=l_{h}^{1}\cup l_{h}^{2}$ for $h\in (\frac{1}{2},3-\sqrt{5})$, where
\begin{eqnarray*}
& &l^{1}_{h}=\left\{ \sqrt{2h}e^{it},\pi- \arccos \left( \frac{3+2h}{4\sqrt{2h}}\right) \leq t\leq \pi+\arccos\left( \dfrac{3+2h}{4\sqrt{2h}}\right)\right\} ,\\
& &l^{2}_{h}=\left\{ \sqrt{2h}e^{it},\pi+\arccos \left( \frac{3+2h}{4\sqrt{2h}}\right) \leq t\leq 3\pi-\arccos \left( \dfrac{3+2h}{4\sqrt{2h}}\right)\right\}.
\end{eqnarray*}

The second family intersects the circles $|z+2|=1$ and $|z-2-i|=1$ and is given by $L_{h}=L_{h}^{1}\cup L_{h}^{2}\cup L_{h}^{3}\cup L_{h}^{4}$, for $h\in (3-\sqrt{5},\frac{9}{2})$, where
\begin{eqnarray*}
& &L^{1}_{h}=\left\{ \sqrt{2h}e^{it}, \pi -\arccos \left( \frac{3+2h}{4\sqrt{2h}}\right) \leq t\leq \pi+\arccos\left( \dfrac{3+2h}{4\sqrt{2h}}\right)\right\} ,\\
& &L^{2}_{h}=\left\{ \sqrt{2h}e^{it},\pi +\arccos \left( \frac{3+2h}{4\sqrt{2h}}\right) \leq t\leq 2\pi+\arctan\left( \frac{1}{2}\right)-\arccos \left( \frac{2+h}{\sqrt{10h}}\right)\right\} ,\\
& &L^{3}_{h}=\left\{  \sqrt{2h}e^{it}, 2\pi +\arctan\left( \frac{1}{2}\right)-\arccos \left( \frac{2+h}{\sqrt{10h}}\right) \leq t\leq 2\pi +\arctan \left( \frac{1}{2}\right) + \arccos\left( \frac{2+h}{\sqrt{10h}}\right)\right\} ,\\
& &L^{4}_{h}=\left\{  \sqrt{2h}e^{it},2\pi+\arctan \left( \frac{1}{2}\right)+\arccos \left( \frac{2+h}{\sqrt{10h}}\right) \leq t\leq 3\pi - \arccos\left( \dfrac{3+2h}{4\sqrt{2h}}\right)\right\} .\\
\end{eqnarray*}

The third family intersects only the circle $|z-2-i|=1$ and is given by $m_{h}=m_{h}^{2}\cup m_{h}^{3}$, for $h\in (\frac{9}{2},3+\sqrt{5})$, where
\begin{eqnarray*}
& &m^{3}_{h}=\left\{ \sqrt{2h}e^{it},\arctan \left( \frac{1}{2}\right)- \arccos \left( \frac{2+h}{\sqrt{10h}}\right) \leq t\leq \arctan \left( \frac{1}{2}\right)+\arccos\left( \dfrac{2+h}{\sqrt{10h}}\right)\right\} ,\\
& &m^{2}_{h}=\left\{ \sqrt{2h}e^{it},\arctan \left( \frac{1}{2}\right)+\arccos \left( \frac{2+h}{\sqrt{10h}}\right) \leq t\leq 2\pi+\arctan \left( \frac{1}{2}\right)- \arccos \left( \frac{2+h}{\sqrt{10h}}\right)\right\}.
\end{eqnarray*}

Consider the points
\begin{eqnarray*}
    A_{1}(h)&=&\left(\frac{-3-2h}{4},\frac{\sqrt{-4h^{2}+20h-9}}{4} \right),\, A_{2}(h)=\left(\frac{-3-2h}{4},-\frac{\sqrt{-4h^{2}+20h-9}}{4} \right)\\
    \tilde{A}_{1}(h)&=&\left(\frac{2h+4-\sqrt{-h^{2}+6h-4}}{5},\frac{h+2+2\sqrt{-h^{2}+6h-4}}{5} \right), \\ \tilde{A}_{2}(h)&=&\left(\frac{2h+4+\sqrt{-h^{2}+6h-4}}{5},\frac{h+2-2\sqrt{-h^{2}+6h-4}}{5} \right).
\end{eqnarray*}

Observe that the first family $l_{h}$ intersects $|z+2|=1$ at the points $A_{1}(h)$ and $A_{2}(h)$, the second family $L_{h}$ intersects $|z+2|=1$ at the points $A_{1}(h)$ and $A_{2}(h)$ and $|z-2-i|=1$ at the points $\tilde{A}_{1}(h)$ and $\tilde{A}_{2}(h)$, and finally the third family $m_{h}$ intersects $|z-2-i|=1$ at the points $\tilde{A}_{1}(h)$ and  $\tilde{A}_{2}(h)$.
We then have that Assumptions \ref{Assumption1} and \ref{Assumption2} of Theorem \ref{teomelnikov} are satisfied for the family $L_{h}$, while Assumptions (1) and (2) of Remark \ref{obs.1} are satisfied for the families $l_{h}$ and $m_{h}$.
To estimate the number of limit cycles that bifurcate from the center $\dot{z}=iz$, it is necessary to analyze the Melnikov functions corresponding to each of the families. We have that, for each family, the corresponding expressions of the Melnikov function are as follows:
For the family that crosses only the circle $|z+2|=1$, the expression of the Melnikov function is given by:
\begin{eqnarray*}
    M_{1}(h)=\Re \left( \int_{l_{h}^{1}}i\overline{h_{1}(z)}dz\right)+\Re \left( \int_{l_{h}^{2}}i\overline{h_{2}(z)}dz\right).
\end{eqnarray*}

For the family that crosses both circles, the Melnikov function is given by:
\[
M_{2}(h)=\Re \left(\int_{L_{h}^{1}}i\overline{h_{1}}dz\right)+\Re \left(\int_{L_{h}^{2}}i\overline{h_{2}}dz\right)\]\[+\Re \left(\int_{L_{h}^{3}}i\overline{h_{3}}dz\right)+\Re \left(\int_{L_{h}^{4}}i\overline{h_{2}}dz\right).
\]

Finally, for the family that crosses only the circle $|z-2-i|=1$, the expression of the Melnikov function is given by:
\begin{eqnarray*}
    M_{3}(h)=\Re \left( \int_{m_{h}^{2}}i\overline{h_{2}(z)}dz\right)+\Re \left( \int_{m_{h}^{3}}i\overline{h_{3}(z)}dz\right).
\end{eqnarray*}

We will analyze each of the functions separately. We will begin with $M_{1}$, and we will also need to compute each of the integrals individually.  In order to simplify the notation, we adopt the expressions $r=\sqrt{2h}$, $\theta_{r}^{1}=\arccos\left(\frac{3+r^{2}}{4r} \right)$.
By applying Lemma \ref{lemaintegral} and the identities $\sin\left(n(\pi-x) \right)=(-1)^{n+1}\sin \left( nx\right)$, $\sin(n(\pi+x))=(-1)^{n}\sin(nx)$,  $\cos(n(\pi-x))=(-1)^{n}\cos(nx)=\cos(n(\pi+x))$, $\sin (n(3\pi-x))=(-1)^{n+1}\sin (nx)$ and $ \cos (n(3\pi-x))=(-1)^{n}\cos (nx)$, we have 
\begin{eqnarray*}
    \Re \left( \int_{l_{h}^{1}}i\overline{h_{1}(z)}dz\right)&=&2a_{1_{0}}r\sin (\theta_{r}^{1})-2a_{1_{1}}r^{2}\theta_{r}^{1}-2\sum_{j=2}^{n}\frac{r^{j+1}}{j-1}(-1)^{j-1}a_{1_{j}}\sin ((j-1)\theta_{r}^{1})
\end{eqnarray*}    
and
\begin{eqnarray*}
    \Re \left( \int_{l_{h}^{2}}i\overline{h_{2}(z)}dz\right)&=&-2a_{2_{0}}r\sin (\theta_{r}^{1})-2a_{2_{1}}r^{2}(\pi -\theta_{r}^{1})+2\sum_{j=2}^{n}\frac{r^{j+1}}{j-1}(-1)^{j-1}a_{2_{j}}\sin ((j-1)\theta_{r}^{1}).
\end{eqnarray*}

Consequently
\begin{eqnarray*}
 M_{1}(r)=c_{0}r\sin(\theta_{r}^{1})+c_{1}r^{2}\theta_{r}^{1}+d_{1}r^{2}+\sum_{j=2}^{n}c_{j}r^{j+1}\sin ((j-1)\theta_{r}^{1}),
\end{eqnarray*}
where
\begin{align*}
c_0 &= 2a_{1_{0}} - 2a_{2_{0}}, & c_1 &= 2a_{2_1}-2a_{1_1}, \\
d_1 &= -2\pi a_{2_{1}}, & c_j &= 2(-1)^{j-1}\left(\frac{a_{2_j} - a_{1_j}}{j - 1}\right)
\end{align*}

Particularly when $n=4$,
\begin{eqnarray*}
    M_{1}(r) &=&c_0 r \sin(\theta_r^1) + c_1 r^2 \theta_r^1 + d_1 r^2  
+ c_2 r^3 \sin(\theta_r^1) + c_3 r^4 \sin(2 \theta_r^1) + c_4 r^5 \sin(3 \theta_r^1).
\end{eqnarray*}

Consider the ordered set of functions
\begin{eqnarray*}
    \mathcal{F}=\{ r\sin (\theta_{r}^{1}),r^3\sin(\theta_{r}^{1}),r^{4}\sin(2\theta_{r}^{1}),r^{5}\sin(3\theta_{r}^{1}),r^{2},r^{2}\theta_{r}^{1}\},
\end{eqnarray*}
and note that $\theta_r^1$ is built from the same circle $|z+2|=1$ as in the proof of Theorem \ref{teoa}, so it fails to be differentiable at $r=1$, and $\mathcal F$ has exactly the same Wronskians computed in Appendix \ref{app.w} (Case 1). Since $\sqrt{2(3-\sqrt5)}\approx1.236<1.90012$, the interval $\left(1,\sqrt{2(3-\sqrt5)}\right)$ --- corresponding to $h\in(\frac12,3-\sqrt5)$ --- is strictly contained in $(1,3)$ and excludes the point $r\approx1.90012$ where $W_5$ vanishes. Fix a closed subinterval $[a,b]=[1.05,1.2]\subset\left(1,\sqrt{2(3-\sqrt5)}\right)$, on which $\mathcal{F}$ is analytic and all six Wronskians are nonzero. Therefore, Lemma \ref{lema2}, applied on $[1.05,1.2]$, guarantees that the number of simple zeros of any element of $\operatorname{Span}(\mathcal{F})$ on $[1.05,1.2]$ does not exceed $5$, and every configuration of $m\leq5$ prescribed simple zeros in $[1.05,1.2]$ is realized by some element of $\operatorname{Span}(\mathcal{F})$. Moreover, since $c_{0},c_{1},d_{1},c_{2},c_{3},c_{4}$ depend linearly and independently on the free parameters of $h_{1},h_{2}$, this configuration of five simple zeros, placed in $[1.05,1.2]\subset\left(1,\sqrt{2(3-\sqrt5)}\right)$, can indeed be achieved through a suitable choice of the perturbation. As a result, system (\ref{eq.it3}) has at least $5$ limit cycles bifurcating from the center $\dot{z}=iz$, which intersect  only the circle $|z+2|=1$.

With the analysis of $M_{1}$ complete, we now turn our attention to $M_{2}$. In addition to $\theta_{r}^{1}$, we define
\begin{eqnarray*}
    \theta_{r}^{2}=\arctan\left(\frac{1}{2}\right) - \arccos\left(\frac{2+h}{\sqrt{10h}}\right) \quad \hbox{and}\quad \theta_{r}^{3}=\arctan\left(\frac{1}{2}\right) + \arccos\left(\frac{2+h}{\sqrt{10h}}\right).
\end{eqnarray*}
Then, applying Lemma \ref{lemaintegral}, we have that 
\begin{eqnarray*}
   \Re \left( \int_{L_{h}^{1}}i\overline{h_{1}(z)}dz\right)&=&2a_{1_{0}}r\sin (\theta_{r}^{1})-2a_{1_{1}}r^{2}\theta_{r}^{1}-2\sum_{j=2}^{n}\frac{r^{j+1}}{j-1}(-1)^{j-1}a_{1_{j}}\sin ((j-1)\theta_{r}^{1}) 
 \end{eqnarray*} 
 \begin{eqnarray*}
   \Re \left( \int_{L_{h}^{2}}i\overline{h_{2}(z)}dz\right)&=&-a_{2_{0}}r(\sin (\theta_{r}^{2})+\sin (\theta_{r}^{1}))+b_{2_{0}}r(\cos (\theta_{r}^{2})+\cos (\theta_{r}^{1}))+a_{2_{1}}r^{2}(\theta_{r}^{1}-\theta_{r}^{2}-\pi )\\
   &+&\sum_{j=2}^{n}\frac{a_{2_{j}}}{j-1}r^{j+1}(-\sin ((j-1)\theta_{r}^{2})+(-1)^{j-1}\sin ((j-1)\theta_{r}^{1}))\\
    &+&\sum_{j=2}^{n}\frac{b_{2_{j}}}{j-1}r^{j+1}(-\cos ((j-1)\theta_{r}^{2})+(-1)^{j-1}\cos ((j-1)\theta_{r}^{1}))
\end{eqnarray*}
\begin{eqnarray*}
     \Re \left( \int_{L_{h}^{3}}i\overline{h_{3}(z)}dz\right)&=&a_{3_{0}}r(\sin(\theta_{r}^{2})-\sin(\theta_{r}^{3}))+b_{3_{0}}r(-\cos(\theta_{r}^{2})+\cos(\theta_{r}^{3}))+a_{3_{1}}r^{2}(\theta_{r}^{2}-\theta_{r}^{3})\\
     &+&\sum_{j=2}^{n}\frac{a_{3_{j}}}{j-1}r^{j+1}(\sin((j-1)\theta_{r}^{2})-\sin((j-1)\theta_{r}^{3}))\\
      &+&\sum_{j=2}^{n}\frac{b_{3_{j}}}{j-1}r^{j+1}(\cos((j-1)\theta_{r}^{2})-\cos((j-1)\theta_{r}^{3}))
\end{eqnarray*}
\begin{eqnarray*}
       \Re \left( \int_{L_{h}^{4}}i\overline{h_{2}(z)}dz\right)&=&a_{2_{0}}r(\sin(\theta_{r}^{3})-\sin(\theta_{r}^{1}))-b_{2_{0}}r(\cos(\theta_{r}^{3})+\cos(\theta_{r}^{1}))+a_{2_{1}}r^{2}(\theta_{r}^{3}+\theta_{r}^{1}-\pi )\\
       &+&\sum_{j=2}^{n}\frac{a_{2_{j}}}{j-1}r^{j+1}(\sin((j-1)\theta_{r}^{3})-(-1)^{j}\sin((j-1)\theta_{r}^{1}))\\
        &+&\sum_{j=2}^{n}\frac{b_{2_{j}}}{j-1}r^{j+1}(\cos((j-1)\theta_{r}^{3})-(-1)^{j-1}\cos((j-1)\theta_{r}^{1}))\\
\end{eqnarray*}

It follows that
\begin{eqnarray*}
    M_{2}(r)&=&(2a_{1_{0}}-2a_{2_{0}})r\sin(\theta_{r}^{1})+(a_{3_{0}}-a_{2_{0}})r(\sin(\theta_{r}^{2})-\sin(\theta_{r}^{3}))\\
    &+&(b_{2_{0}}-b_{3_{0}})r(\cos(\theta_{r}^{2})-\cos(\theta_{r}^{3}))+(-2a_{1_{1}}+2a_{2_{1}})r^{2}\theta_{r}^{1}\\
    &+&a_{2_{1}}r^{2}(\theta_{r}^{3}-\theta_{r}^{2}-2\pi )+a_{3_{1}}r^{2}(\theta_{r}^{2}-\theta_{r}^{3})\\
    &+&\sum_{j=2}^{n}\frac{(-1)^{j-1}(-2a_{1_{j}}+2a_{2_{j}})}{j-1}r^{j+1}\sin((j-1)\theta_{r}^{1})\\
   &+& \sum_{j=2}^{n}\frac{(-a_{2_{j}}+a_{3_{j}})}{j-1}r^{j+1}(\sin((j-1)\theta_{r}^{2})-\sin((j-1)\theta_{r}^{3}))\\
    &+& \sum_{j=2}^{n}\frac{(-b_{2_{j}}+b_{3_{j}})}{j-1}r^{j+1}(\cos((j-1)\theta_{r}^{2})-\cos((j-1)\theta_{r}^{3})).
\end{eqnarray*}

Specifically, when $n=4$,
\begin{eqnarray*}
    M_{2}(r)=\sum_{j=1}^{15}A_{j}f_{j}(r),
\end{eqnarray*}
where
\[
\begin{aligned}
f_{1}(r) &= r\sin(\theta_{r}^{1}), &\quad f_{2}(r) &= r(\sin(\theta_{r}^{2}) - \sin(\theta_{r}^{3})), \\
f_{3}(r) &= r(\cos(\theta_{r}^{2}) - \cos(\theta_{r}^{3})), &\quad f_{4}(r) &= r^{2} \theta_{r}^{1}, \\
f_{5}(r) &= r^{2}( \theta_{r}^{3}- \theta_{r}^{2}-2\pi ), &\quad f_{6}(r) &= r^{2}(\theta_{r}^{2}- \theta_{r}^{3}), \\
f_{7}(r) &= r^{3} \sin(\theta_{r}^{1}), &\quad f_{8}(r) &= r^{3} (\sin(\theta_{r}^{2}) - \sin(\theta_{r}^{3})), \\
f_{9}(r) &= r^{3} (\cos(\theta_{r}^{2}) - \cos(\theta_{r}^{3})), &\quad f_{10}(r) &= r^{4} \sin(2\theta_{r}^{1}), \\
f_{11}(r) &= r^{4} (\sin(2\theta_{r}^{2}) - \sin(2\theta_{r}^{3})), &\quad f_{12}(r) &= r^{4} (\cos(2\theta_{r}^{2}) - \cos(2\theta_{r}^{3})), \\
f_{13}(r) &= r^{5} \sin(3\theta_{r}^{1}), &\quad f_{14}(r) &= r^{5} (\sin(3\theta_{r}^{2}) - \sin(3\theta_{r}^{3})), \\
f_{15}(r) &= r^{5} (\cos(3\theta_{r}^{2}) - \cos(3\theta_{r}^{3})).
\end{aligned}
\]
and
\[
\begin{aligned}
A_{1} &= (2a_{1_{0}} - 2a_{2_{0}}), &\quad A_{2} &= (a_{3_{0}} - a_{2_{0}}), \\
A_{3} &= (b_{2_{0}} - b_{3_{0}}), &\quad A_{4} &= (-2a_{1_{1}} + 2a_{2_{1}}), \\
A_{5} &= a_{2_{1}}, &\quad A_{6} &= a_{3_{1}}, \\
A_{7} &= (2a_{1_{2}} - 2a_{2_{2}}), &\quad A_{8} &= (-a_{2_{2}} + a_{3_{2}}), \\
A_{9} &= (-b_{2_{2}} + b_{3_{2}}), &\quad A_{10} &= \frac{-2a_{1_{3}} + 2a_{2_{3}}}{2}, \\
A_{11} &= \frac{-a_{2_{3}} + a_{3_{3}}}{2}, &\quad A_{12} &= \frac{-b_{2_{3}} + b_{3_{3}}}{2}, \\
A_{13} &= \frac{2a_{1_{4}} - 2a_{2_{4}}}{3}, &\quad A_{14} &= \frac{-a_{2_{4}} + a_{3_{4}}}{3}, \\
A_{15} &= \frac{-b_{2_{4}} + b_{3_{4}}}{3}.
\end{aligned}
\]

Using the expressions for $\theta_{r}^{2}$ and $\theta_{r}^{3}$ together with the identities
\begin{eqnarray*}
    \cos(x \pm y) = \cos x \cos y \mp \sin x \sin y \quad \hbox{and } \quad \sin(x \pm y) = \sin x \cos y \pm \cos x \sin y,
\end{eqnarray*}
we obtain that $f_{2}(r)=-\frac{4\sqrt{5}}{5}r\sin \theta_{r}^{4}$, $f_{3}(r)=\frac{2\sqrt{5}}{5}r\sin \theta_{r}^{4}$, $f_{5}(r)=2r^{2}(\theta_{r}^{4}-\pi )$, $f_{6}(r)=-2r^{2}\theta_{r}^{4}$, $f_{8}(r)=-\frac{4\sqrt{5}}{5}r^{3}\sin \theta_{r}^{4}$, $f_{9}(r)=\frac{2\sqrt{5}}{5}r^{3}\sin \theta_{r}^{4}$, $f_{11}(r)=-\frac{6}{5}r^{4}\sin 2\theta_{r}^{4}$, $f_{12}(r)=\frac{8}{5}r^{4}\sin 2\theta_{r}^{4}$, $f_{14}(r)=-\frac{4\sqrt{5}}{25}r^{5}\sin 3\theta_{r}^{4}$ and $f_{15}(r)=\frac{22\sqrt{5}}{25}r^{5}\sin 3\theta_{r}^{4}$, where $\theta_{r}^{4}=\arccos \left(\frac{2+\frac{r^{2}}{2}}{\sqrt{5}r} \right)$. 
Hence, it follows that $f_{2}(r)=-2f_{3}(r)$,  $f_{8}(r)=-2f_{9}(r)$, $f_{11}(r)=-\frac{3}{4}f_{12}(r)$, and $f_{14}(r)=-\frac{2}{11}f_{15}(r)$. 
Therefore, we can rewrite $M_{2}(r)$ as
\begin{eqnarray*}
    M_{2}(r)=\sum_{j=1}^{11}B_{j}g_{j}(r),
\end{eqnarray*}
where 
\[
\begin{aligned}
g_{1}(r) &= r\sin(\theta_{r}^{1}), &\quad g_{2}(r) &= r\sin(\theta_{r}^{4}),&\quad g_{3}(r) &=r^{2}\theta_{r}^{1}, \\
g_{4}(r) &=r^{2}(\theta_{r}^{4}-\pi ),&\quad g_{5}(r) &=r^{2}, &\quad g_{6}(r) &=r^{3}\sin (\theta_{r}^{1}),\\
g_{7}(r) &=r^{3}\sin (\theta_{r}^{4}), &\quad g_{8}(r) &=r^{4}\sin (2\theta_{r}^{1}), & \quad  g_{9}(r) &=r^{4}\sin (2\theta_{r}^{4})\\
g_{10}(r) &=r^{5}\sin (3\theta_{r}^{1}), &\quad g_{11}(r) &=r^{5}\sin (3\theta_{r}^{4}) .
\end{aligned}
\]
and
\[
\begin{aligned}
B_{1} &= A_{1}, &\quad 
B_{2} &= -\frac{4\sqrt{5}}{5}A_{2}+\frac{2\sqrt{5}}{5}A_{3}, &\quad 
B_{3} &= A_{4}, \\
B_{4} &= 2A_{5}-2A_{6}, &\quad 
B_{5} &= -2\pi A_{6}, &\quad 
B_{6} &= A_{7}, \\
B_{7} &= -\frac{4\sqrt{5}}{5}A_{8}+\frac{2\sqrt{5}}{5}A_{9}, &\quad
B_{8} &= A_{10}, &\quad 
B_{9} &= -\frac{6}{5}A_{11}+\frac{8}{5}A_{12}, \\
B_{10} &= A_{13}, &\quad 
B_{11} &= -\frac{4\sqrt{5}}{25}A_{14}+\frac{22\sqrt{5}}{25}A_{15}. &
\end{aligned}
\]

Note that $W_{11}(2)\approx 2.237065\times 10^{19}\neq 0$. Therefore, the functions $g_{1},\cdots , g_{11}$ are linearly independent. In addition, $g_{5}$ maintains a constant sign over the interval $\left( \sqrt{2(3-\sqrt{5})},3\right)$, and the coefficients $B_{1},\cdots , B_{11}$ can be chosen arbitrarily. Consequently, by Lemma \ref{lem1}(b), it follows that one can select these coefficients so that $M_{2}(r)$ has at least $10$ simple zeros. Therefore, system (\ref{eq.it3}) has at least $10$ limit cycles bifurcating from the center $\dot{z}=iz$, which intersect both $|z+2|=1$ and $|z-2-i|=1$.
Finally, we consider $M_{3}$. We have that
\begin{eqnarray*}
    \Re \left( \int_{m_{h}^{3}}i\overline{h_{3}(z)}dz\right)&=&a_{3_{0}}r(\sin(\theta_{r}^{2})-\sin(\theta_{r}^{3}))-b_{3_{0}}r(\cos(\theta_{r}^{2})-\cos(\theta_{r}^{3}))+a_{3_{1}}r^{2}(\theta_{r}^{2}-\theta_{r}^{3})\\
    &+&\sum_{j=2}^{n}\frac{a_{3_{j}}}{j-1}r^{j+1}(\sin((j-1)\theta_{r}^{2})-\sin((j-1)\theta_{r}^{3}))\\
    &+&\sum_{j=2}^{n}\frac{b_{3_{j}}}{j-1}r^{j+1}(\cos((j-1)\theta_{r}^{2})-\cos((j-1)\theta_{r}^{3}))
\end{eqnarray*}
and
\begin{eqnarray*}
    \Re \left( \int_{m_{h}^{2}}i\overline{h_{2}(z)}dz\right)&=&-a_{2_{0}}r(\sin(\theta_{r}^{2})-\sin(\theta_{r}^{3}))+b_{2_{0}}r(\cos(\theta_{r}^{2})-\cos(\theta_{r}^{3}))-2\pi a_{2_{1}}r^{2}\\
    &-&a_{2_{1}}r^{2}(\theta_{r}^{2}-\theta_{r}^{3})-\sum_{j=2}^{n}\frac{a_{2_{j}}}{j-1}r^{j+1}(\sin((j-1)\theta_{r}^{2})-\sin((j-1)\theta_{r}^{3}))\\
    &-&\sum_{j=2}^{n}\frac{b_{2_{j}}}{j-1}r^{j+1}(\cos((j-1)\theta_{r}^{2})-\cos((j-1)\theta_{r}^{3}))
\end{eqnarray*}
As a result 
\begin{eqnarray*}
    M_{3}(r)&=&(a_{3_{0}}-a_{2_{0}})r(\sin(\theta_{r}^{2})-\sin(\theta_{r}^{3}))+(b_{2_{0}}-b_{3_{0}})r(\cos(\theta_{r}^{2})-\cos(\theta_{r}^{3}))-2\pi a_{2_{1}}r^{2}\\
    &+&(a_{3_{1}}-a_{2_{1}})r^{2}(\theta_{r}^{2}-\theta_{r}^{3})+\sum_{j=2}^{n}\frac{(a_{3_{j}}-a_{2_{j}})}{j-1}r^{j+1}(\sin((j-1)\theta_{r}^{2})-\sin((j-1)\theta_{r}^{3}))\\
    &+&\sum_{j=2}^{n}\frac{(b_{3_{j}}-b_{2_{j}})}{j-1}r^{j+1}(\cos((j-1)\theta_{r}^{2})-\cos((j-1)\theta_{r}^{3})).
\end{eqnarray*}

In particular, for $n=4$,
\begin{eqnarray*}
    M_{3}(r)=\sum_{j=1}^{10}A_{j}f_{j}(r),
\end{eqnarray*}
where
\[
\begin{aligned}
f_{1}(r) &= r(\sin(\theta_{r}^{2})-\sin(\theta_{r}^{3})), &\quad
f_{2}(r) &= r(\cos(\theta_{r}^{2})-\cos(\theta_{r}^{3})), \\
f_{3}(r) &= r^{2}, &\quad
f_{4}(r) &= r^{2}(\theta_{r}^{2}-\theta_{r}^{3}), \\
f_{5}(r) &= r^{3}(\sin(\theta_{r}^{2})-\sin(\theta_{r}^{3})), &\quad
f_{6}(r) &= r^{3}(\cos(\theta_{r}^{2})-\cos(\theta_{r}^{3})), \\
f_{7}(r) &= r^{4}(\sin(2\theta_{r}^{2})-\sin(2\theta_{r}^{3})), &\quad
f_{8}(r) &= r^{4}(\cos(2\theta_{r}^{2})-\cos(2\theta_{r}^{3})), \\
f_{9}(r) &= r^{5}(\sin(3\theta_{r}^{2})-\sin(3\theta_{r}^{3})), &\quad
f_{10}(r) &= r^{5}(\cos(3\theta_{r}^{2})-\cos(3\theta_{r}^{3})),
\end{aligned}
\]
and 
\[
\begin{aligned}
A_1 &= a_{3_{0}}-a_{2_{0}}, &\quad
A_2 &= b_{2_{0}}-b_{3_{0}}, \\
A_3 &= -2\pi a_{2_{1}}, &\quad
A_4 &= a_{3_{1}}-a_{2_{1}}, \\
A_5 &= a_{3_{2}}-a_{2_{2}}, &\quad
A_6 &= b_{3_{2}}-b_{2_{2}}, \\
A_7 &= \frac{a_{3_{3}}-a_{2_{3}}}{2}, &\quad
A_8 &= \frac{b_{3_{3}}-b_{2_{3}}}{2}, \\
A_9 &= \frac{a_{3_{4}}-a_{2_{4}}}{3}, &\quad
A_{10} &= \frac{b_{3_{4}}-b_{2_{4}}}{3}.
\end{aligned}
\]

Proceeding analogously to the case of $M_{2}(r)$, and using the same trigonometric identities, we obtain that
\begin{eqnarray*}
    M_{3}(r) &=&B_{1}r \sin(\theta_r^4) + B_{2} r^2 \theta_r^4 +B_{3} r^2  
+ B_{4} r^3 \sin(\theta_r^4) + B_{5} r^4 \sin(2 \theta_r^4) + B_{6} r^5 \sin(3 \theta_r^4),
\end{eqnarray*}
where
\[
\begin{aligned}
B_1 &= \frac{4\sqrt{5}}{5}(a_{2_{0}}-a_{3_{0}})+\frac{2\sqrt{5}}{5}(b_{2_{0}}-b_{3_{0}}), &\quad
B_2 &=2(a_{2_{1}}-a_{3_{1}}), \\
B_3 &= -2\pi a_{2_{1}}, &\quad
B_4 &= \frac{4\sqrt{5}}{5}(a_{2_{2}}-a_{3_{2}})+\frac{2\sqrt{5}}{5}(b_{3_{2}}-b_{2_{2}}), \\
B_5 &= \frac{3}{5}(a_{2_{3}}-a_{3_{3}})+\frac{4}{5}(b_{3_{3}}-b_{2_{3}}), &\quad
B_6 &= \frac{4\sqrt{5}}{75}(a_{2_{4}}-a_{3_{4}})+\frac{22\sqrt{5}}{75}(b_{3_{4}}-b_{2_{4}}), \\
\end{aligned}
\]

Let
\begin{eqnarray*}
    \mathcal{F}=\{ r\sin (\theta_{r}^{4}),r^3\sin(\theta_{r}^{4}),r^{4}\sin(2\theta_{r}^{4}),r^{5}\sin(3\theta_{r}^{4}),r^{2},r^{2}\theta_{r}^{4}\}
\end{eqnarray*}
be an ordered set of functions. Note that $\theta_r^4$ fails to be differentiable at $r=\sqrt{2(3+\sqrt5)}$, where the level curve becomes tangent to $|z-2-i|=1$; so $\mathcal{F}$ is analytic only on the open interval $\left(3,\sqrt{2(3+\sqrt5)}\right)$, and Lemma \ref{lema2} must be applied on a closed subinterval of it. Fix $[a,b]=[3.05,3.2]\subset\left(3,\sqrt{2(3+\sqrt5)}\right)$. By Appendix \ref{app.w} (Case 3), all six Wronskians $W_{0},\ldots,W_{5}$ are nonzero throughout $\left(3,\sqrt{2(3+\sqrt{5})}\right)$, in particular on $[3.05,3.2]$, so $\mathcal{F}$ is an ECT-system there.
Thus, by Lemma \ref{lema2}, applied on $[3.05,3.2]$, any element of $\operatorname{Span}(\mathcal F)$ has at most $5$ simple zeros on $[3.05,3.2]$, and every configuration of $m\leq 5$ prescribed simple zeros in $[3.05,3.2]$ is realized by some element of $\operatorname{Span}(\mathcal F)$. Since the coefficients $B_1,\ldots,B_6$ are independent linear functions of the free parameters $a_{2_j},a_{3_j},b_{2_j},b_{3_j}$, such a configuration, with its five zeros placed in $[3.05,3.2]\subset\left(3,\sqrt{2(3+\sqrt5)}\right)$, is realizable by the perturbation. Therefore, system (\ref{eq.it3}) has at least $5$ limit cycles bifurcating from the center $\dot{z}=iz$, which intersect only $|z-2-i|=1$.
\end{proof}

\renewcommand{\proofname}{Proof of statement (b) of Theorem \ref{teoc}}
\begin{proof}
    The following system
 \begin{eqnarray}\label{eq.ex.teocc}
\begin{cases}
\dot{z}=iz+\varepsilon \left( h_{1}(z) \right) \quad \mbox{if} \quad |z+2|<1,\\
\dot{z}=iz+\varepsilon \left( h_{2}(z) \right) \quad \mbox{if} \quad |z+2|>1 \quad \hbox{and }\quad |z-2-i|>1  \\
\dot{z}=iz+\varepsilon \left( h_{3}(z)\right) \quad \mbox{if} \quad |z-2-i|<1,
\end{cases}
\end{eqnarray}
has at least $14$ limit cycles for sufficiently small $\varepsilon $, where
\begin{eqnarray*}
    h_{1}(z)&=& 0.347564393426+0.103145110396z-0.408654841162z^{2}\\
    &-&0.330744674151z^{3}-0.063628043173z^{4}\\
    h_{2}(z)&=&0.005927528247z\\
    h_{3}(z)&=&0.291594843305-0.070287496338z-0.559016994375z^{2}\\
    &+&0.701345221921z^{3}- 0.505728132883z^{4}
\end{eqnarray*}
Indeed, note that the first-order Melnikov function associated with (\ref{eq.ex.teocc}) and corresponding to the first family of periodic orbits is given by
\begin{align*}
M_{1}(r) =\;& 
     0.695128786853\, r\sin(\theta_{r}^{1}) 
    -0.037243758387\,r^{2} 
    -0.194435164299 r^{2} \theta_{r}^{1} \\
    & -0.817309682324\,r^{3} \sin(\theta_{r}^{1}) 
    +0.330744674151\,r^{4} \sin(2\theta_{r}^{1}) 
     -0.042418695449\,r^{5} \sin(3\theta_{r}^{1})
\end{align*}
and $M_{1}(r)$ has two zeros in $\left(1,\sqrt{2(3-\sqrt{5})}\right)$
\begin{eqnarray*}
    r_{1}\approx 1.042794, \quad r_{2}\approx 1.219573.
\end{eqnarray*}
Differentiating $M_{1}$, we find that $M_{1}'(r)$ vanishes on $\left(1,\sqrt{2(3-\sqrt{5})}\right)$ exactly at $r\approx 1.115244$, distinct from $r_{1},r_{2}$ above. Hence, $M_{1}'(r_{j})\neq 0$ for $j=1,2$ so both zeros of $M_{1}(r)$ are simple.
The first-order Melnikov function corresponding to the third family is given
by
\begin{align*}
M_{3}(r) =\;& 
    -0.521620713215\, r\sin(\theta_{r}^{4}) 
    - 0.037243758387\,r^{2} 
    +0.152430049170 r^{2} \theta_{r}^{4} \\
    & +\,r^{3} \sin(\theta_{r}^{4}) 
    - 0.420807133153\,r^{4} \sin(2\theta_{r}^{4}) 
    +0.060311599107\,r^{5} \sin(3\theta_{r}^{4})
\end{align*}
and $M_{3}(r)$ has two zeros in $\left(3,\sqrt{2(3+\sqrt{5})}\right)$
\begin{eqnarray*}
    r_{1}\approx  3.022871, \quad r_{2}\approx 3.215850.
\end{eqnarray*}
Differentiating $M_{3}(r)$, we find that $M_{3}'(r)$ vanishes on $\left(3,\sqrt{2(3+\sqrt{5})}\right)$ at $r\approx 3.146898$, distinct from $r_{1},r_{2}$ above. Hence, $M_{3}'(r_{j})\neq0 $ for $j=1,2$ so both zeros of $M_{3}(r)$ are simple.
\begin{figure}[!htb]
\centering
\begin{minipage}{0.48\textwidth}
    \centering
    \includegraphics[width=\textwidth]{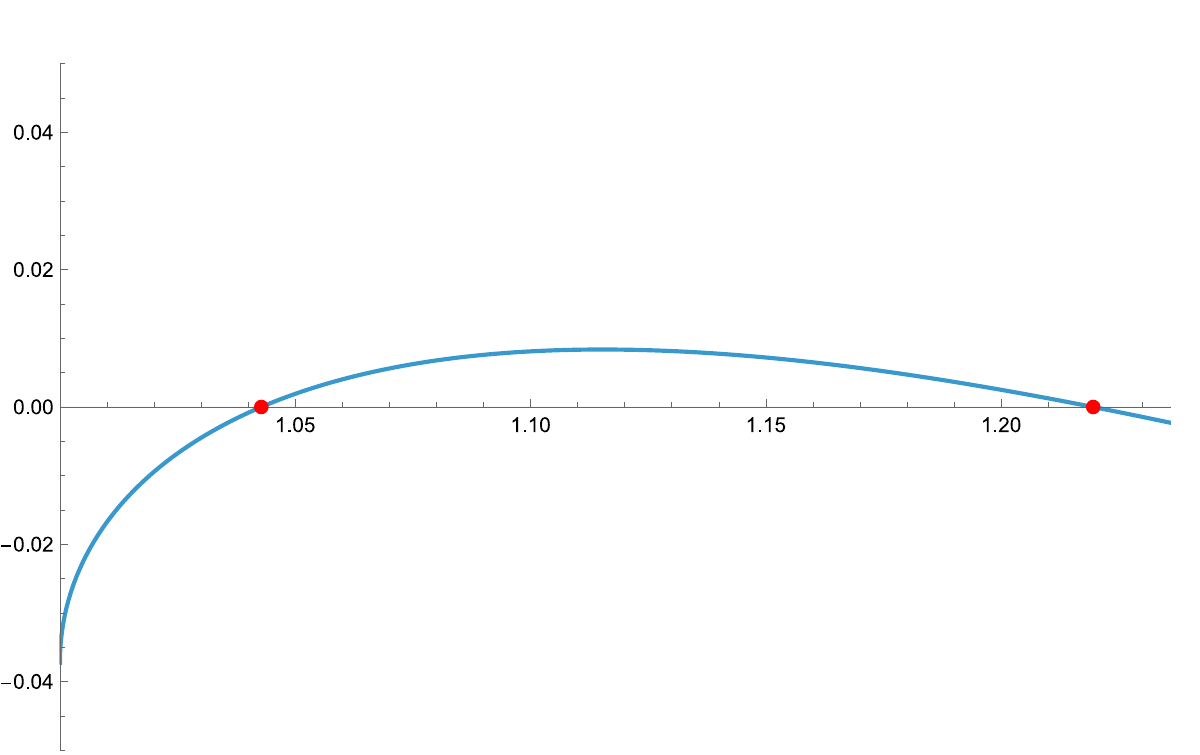}
\end{minipage}
\hspace{-0.3cm}
\begin{minipage}{0.48\textwidth}
    \centering
    \includegraphics[width=\textwidth]{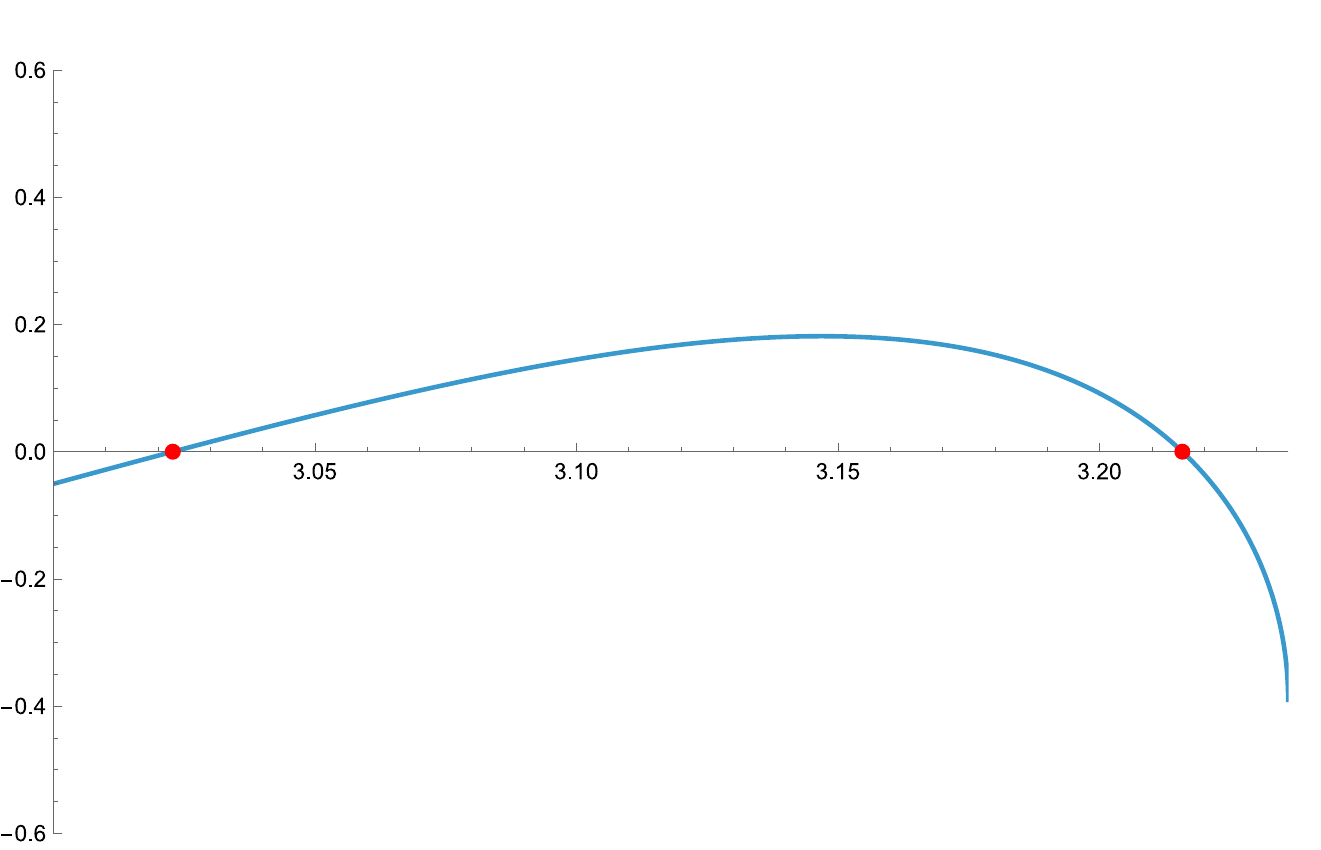}
\end{minipage}
\caption{Graph of the Melnikov functions $M_1(r)$ and $M_{3}(r)$ associated with system (\ref{eq.ex.teocc}).}
\end{figure}
Finally, the  first-order Melnikov function of the second family is given
by
\begin{align*}
M(r) =\;& 
     0.695128786853\, r\sin(\theta_{r}^{1}) 
     -0.521620713215\,r\sin(\theta_{r}^{4}) \\
   & -0.194435164299 r^{2} \theta_{r}^{1}+0.152430049170r^{2}(\theta_{r}^{4}-\pi ) + 0.441629364272\, r^{2}\\
    & -0.817309682324\,r^{3} \sin(\theta_{r}^{1}) +\,r^{3} \sin(\theta_{r}^{4})\\ 
    &+0.330744674151\,r^{4} \sin(2\theta_{r}^{1}) - 0.420807133153\,r^{4} \sin(2\theta_{r}^{4})\\
    &-0.042418695449r^{5} \sin(3\theta_{r}^{1})+0.060311599107r^{5} \sin(3\theta_{r}^{4}),
\end{align*}
and admits ten zeros
\begin{eqnarray*}
    r_{1}& \approx &  1.317699, \quad r_{2}\approx 1.376726, \quad r_{3}\approx 1.580858, \quad r_{4} \approx 1.767719, \quad r_{5}\approx  1.809213
    \\ r_{6}&\approx &1.884499, \quad r_{7}\approx 2.459783, \quad r_{8}\approx 2.695006, \quad r_{9} \approx 2.755498, \quad r_{10}\approx 2.808856
\end{eqnarray*}
Moreover, differentiating $M(r)$, one finds that $M'(r)$ vanishes on $\left( \sqrt{2(3-\sqrt{5})},3\right)$ exactly at the points
\begin{eqnarray*}
    r &\approx& 1.337763,\quad 1.439581,\quad 1.639157,\quad 1.787038,\quad \quad 1.855654,\\
\quad & & 2.270152,\quad 2.576217,\quad 2.726258,\quad 2.790277
\end{eqnarray*}
none of which coincides with $r_{1},\cdots , r_{10}$ above. Hence, $M'(r_{j})\neq 0$ for $j=1,\cdots 10$ so all ten zeros of $M(r)$ are simple.
Therefore, for $\varepsilon $ sufficiently small the system  (\ref{eq.ex.teocc}) has at least fourteen  limit cycles.
\begin{figure}[!htb]
\centering
\includegraphics[scale=0.68]{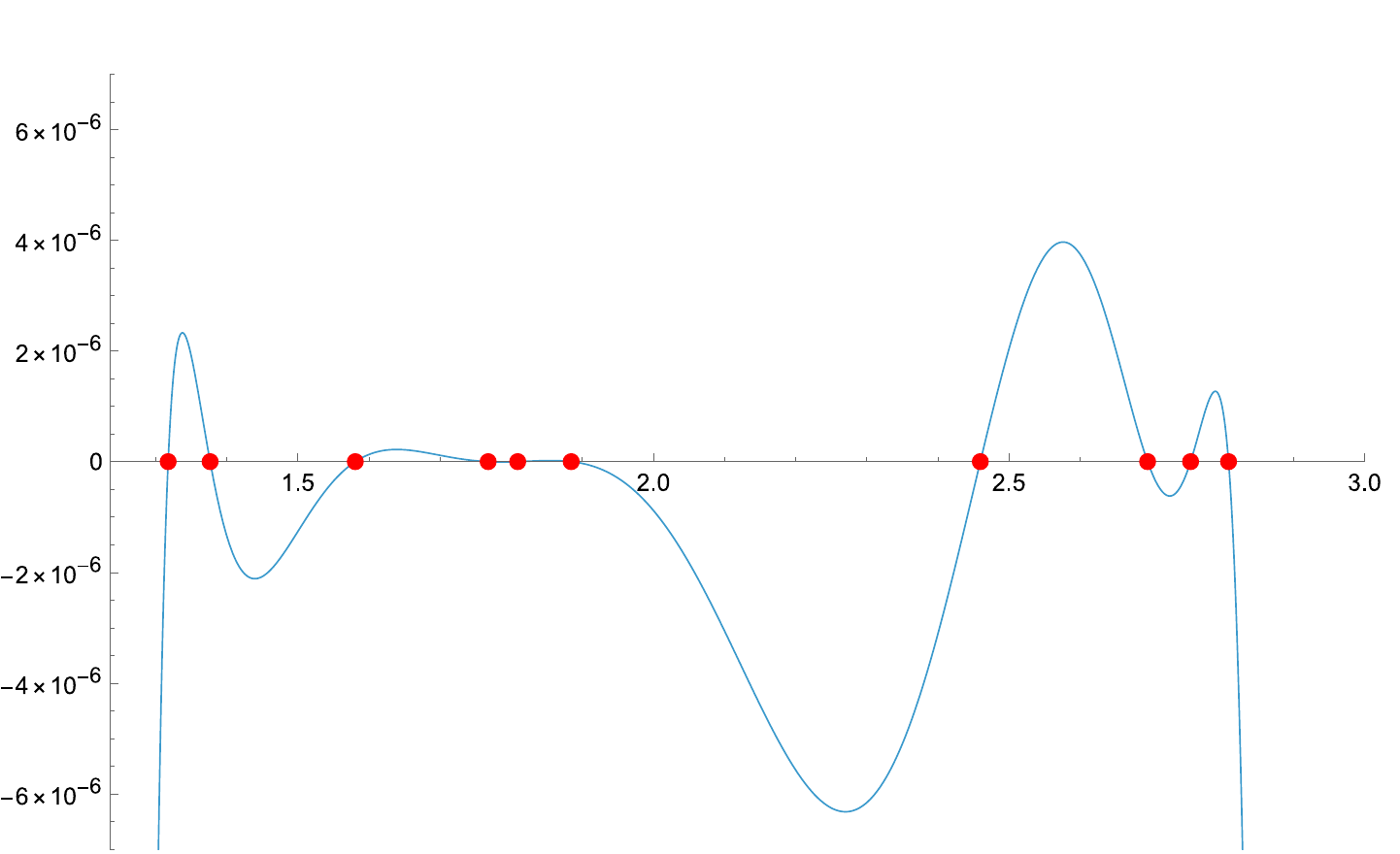}
\caption{Graph of the Melnikov function $M(r)$ associated with system (\ref{eq.ex.teocc}). }
\label{imgteoc}
\end{figure}
\end{proof}
\section{Conclusion}
In this work, we studied the bifurcation of limit cycles from the periodic orbits of the linear holomorphic center $\dot{z}=iz$ under piecewise polynomial holomorphic perturbations with three zones separated by two external, disjoint circles. Three geometric configurations were considered: equal, aligned circles (Theorem \ref{teoa}); circles of different radii, still aligned (Theorem \ref{teob}); and equal circles that are not aligned (Theorem \ref{teoc}). In every configuration beyond the most symmetric one, the loss of symmetry (different radii or non-aligned centers) splits the periodic orbits of the unperturbed center into three disjoint annuli, one crossing both circles and two crossing only one of them, and this decomposition is what allows limit cycles to be produced simultaneously in all three annuli by a single common choice of the perturbation parameters. Although the Melnikov function was obtained for perturbations of arbitrary degree $n$, the complexity of the resulting expression led us to present the results only for the case $n=4$, which yields the bounds and explicit examples of Theorems \ref{teob}(b) and \ref{teoc}(b), with $16$ and $14$ limit cycles, respectively.

\appendix
\section{Proof of Theorem \ref{teomelnikov}}\label{sec6}

In this appendix, we follow the steps of \cite{refb8} and \cite{refb17}, adapting them to our specific case to derive the expression given in (\ref{funcaomelnikov}).
Consider the system (\ref{melnikov1}) and the solution that starts at the point $A_{1}(h)$. Let $A_{2,\varepsilon }(h)$ be the point of intersection of this solution with $\Sigma_{2} $. Denote by $\tilde{A}_{2,\varepsilon }(h)$ the point of intersection of the solution of system  (\ref{melnikov1})  starting at $A_{2, \varepsilon }(h)$ with $\tilde{\Sigma }_{2}$. Next, let $\tilde{A}_{1,\varepsilon }(h)$ be the point of intersection of the orbit starting at $\tilde{A}_{2,\varepsilon }(h)$ with $\tilde{\Sigma}_{1}$. Finally, denote by $A_{1, \varepsilon }(h)$ the point of intersection of the orbit starting at $\tilde{A}_{1,\varepsilon }(h)$ with $\Sigma_{1}$.
Using the Poincaré map of the system (\ref{melnikov1}), we define the displacement function
\begin{eqnarray}\label{eq.a1}
    H_{1}(A_{1,\varepsilon}(h))-H_{1}(A_{1}(h))=\varepsilon M(h)+O(\varepsilon^{2}),
\end{eqnarray}
where $M(h)$ is called the first-order Melnikov function of system  (\ref{melnikov1}).
\begin{figure}[!htb]
\centering
\includegraphics[scale=1.0]{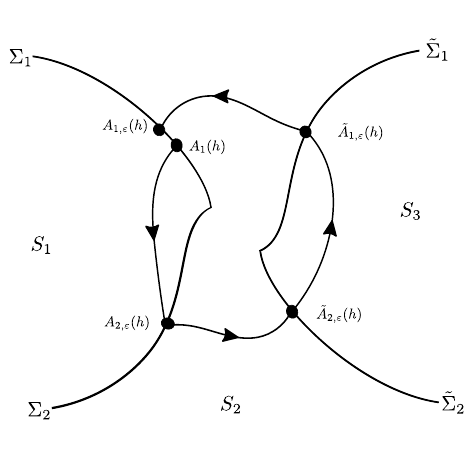}
\caption{The Poincaré  map related to perturbed system (\ref{melnikov1}). }
\label{img20}
\end{figure}
We have that
\begin{eqnarray*}
    H_{1}(A_{1,\varepsilon }(h))-H_{1}(A_{1}(h))&=&\left[ H_{1}(A_{1,\varepsilon }(h))-H_{2}(A_{1,\varepsilon }(h))\right]+\left[ H_{2}(A_{1,\varepsilon }(h))-H_{2}(\tilde{A}_{1,\varepsilon }(h))\right]\\
    &+&\left[ H_{2}(\tilde{A}_{1,\varepsilon }(h))-H_{3}(\tilde{A}_{1,\varepsilon }(h))\right] +\left[ H_{3}(\tilde{A}_{1,\varepsilon }(h))-H_{3}(\tilde{A}_{2,\varepsilon }(h))\right]\\
    &+&\left[ H_{3}(\tilde{A}_{2,\varepsilon }(h))-H_{2}(\tilde{A}_{2,\varepsilon }(h))\right]+\left[ H_{2}(\tilde{A}_{2,\varepsilon }(h))-H_{2}(A_{2,\varepsilon }(h))\right]\\
    &+&\left[ H_{2}(A_{2,\varepsilon }(h))-H_{1}(A_{2,\varepsilon }(h))\right]+\left[ H_{1}(A_{2,\varepsilon }(h))-H_{1}(A_{1}(h))\right]\\
    &=&l_{1}+l_{2}+l_{3}+l_{4}+l_{5}+l_{6}+l_{7}+l_{8}.
\end{eqnarray*}

Observe that
\begin{eqnarray*}
    l_{2}&=&\left[ H_{2}(A_{1,\varepsilon }(h))-H_{2}(\tilde{A}_{1,\varepsilon }(h))\right]\\
    &=&\int_{\widehat{\tilde{A}_{1,\varepsilon },A_{1,\varepsilon}}}dH_{2}(x,y)=\int_{\widehat{\tilde{A}_{1,\varepsilon },A_{1,\varepsilon}}} H_{2x}dx+H_{2y}dy\\
  &=&  \int_{\widehat{\tilde{A}_{1,\varepsilon },A_{1,\varepsilon}}}H_{2x}(H_{2y}+\varepsilon f_{2})dt+H_{2y}(-H_{2x}+\varepsilon g_{2})dt\\
  &=&\varepsilon \int_{\widehat{\tilde{A}_{1,\varepsilon },A_{1,\varepsilon}}}(H_{2x}f_{2}+H_{2y}g_{2})dt\\
  &=&\varepsilon \int_{\widehat{\tilde{A}_{1,\varepsilon },A_{1,\varepsilon}}}f_{2}(-dy+\varepsilon g_{2}dt)+g_{2}(dx-\varepsilon f_{2}dt)\\
  &=&\varepsilon \int_{\widehat{\tilde{A}_{1},A_{1}}} (g_{2}dx-f_{2}dy)+O(\varepsilon^{2} ).
\end{eqnarray*}

Therefore
\begin{eqnarray}\label{a.eq1}
    \frac{d}{d \varepsilon}l_{2}\Big|_{\varepsilon=0}=\int_{L_{h}^{4}} g_{2}dx-f_{2}dy.
\end{eqnarray}
Similarly, we can see that
\begin{align}
    \frac{d}{d \varepsilon}l_{4}\Big|_{\varepsilon=0} &= \int_{L_{h}^{3}} g_{3}dx - f_{3}dy, \label{a.eq2} \\
    \frac{d}{d \varepsilon}l_{6}\Big|_{\varepsilon=0} &= \int_{L_{h}^{2}} g_{2}dx - f_{2}dy, \label{a.eq3} \\
    \frac{d}{d \varepsilon}l_{8}\Big|_{\varepsilon=0} &= \int_{L_{h}^{1}} g_{1}dx - f_{1}dy. \label{a.eq4}
\end{align}

By the definition of $l_{8}$, we have that
\begin{eqnarray*}
    \frac{d}{d\varepsilon  }l_{8}= \langle \nabla H_{1}(A_{2,\varepsilon}),\left. \frac{d}{d\varepsilon }A_{2,\varepsilon }\right|_{\varepsilon =0} \rangle =\langle  \nabla H_{1} (A_{2}),\left. \frac{d}{d\varepsilon }A_{2, \varepsilon }\right|_{\varepsilon =0}\rangle .
\end{eqnarray*}

Since $\varphi_{2}(A_{2,\varepsilon })=0$, then
\begin{eqnarray*}
    \langle \nabla \varphi_{2}(A_{2}),\frac{d}{d\varepsilon }A_{2,\varepsilon } \rangle =0.
\end{eqnarray*}

Therefore,  there exists $\lambda_{2}\in \mathbb{R}$ such that  
\begin{eqnarray*}
    \left. \frac{d}{d\varepsilon }A_{2, \varepsilon }\right|_{\varepsilon =0}=\lambda_{2}\nabla\varphi_{2}^{\perp}(A_{2}),
\end{eqnarray*}
and consequently
\begin{eqnarray}\label{a.eq5}
    \frac{d}{d\varepsilon}l_{8}=\lambda_{2}\langle \nabla H_{1}(A_{2}),\nabla \varphi_{2}^{\perp}(A_{2})\rangle .
\end{eqnarray}

From equations (\ref{a.eq4}) and (\ref{a.eq5}) it follows that
\begin{eqnarray}\label{a.eq6}
    \lambda_{2}=\dfrac{1}{\langle\nabla H_{1} (A_{2}),\nabla \varphi_{2}^{\perp }(A_{2}) \rangle }\int_{L_{h}^{1}} g_{1}dx-f_{1}dy.
\end{eqnarray}

Similarly
\begin{equation} \label{a.eq7}
\begin{aligned}
  \left.  \frac{d}{d\varepsilon }l_{2}\right|_{\varepsilon =0} &= \langle \nabla H_{2}(A_{1}),\left.\frac{d}{d\varepsilon } A_{1,\varepsilon }\right|_{\varepsilon=0} \rangle -\langle \nabla H_{2}(\tilde{A}_{1}),\left.\frac{d}{d\varepsilon } \tilde{A}_{1,\varepsilon }\right|_{\varepsilon=0}\rangle \\
  &= \lambda_{1}\langle \nabla H_{2}(A_{1}),\nabla \varphi _{1}^{\perp}(A_{1}) \rangle-\tilde{\lambda }_{1}\langle \nabla H_{2}(\tilde{A}_{1}),\nabla \tilde{ \varphi }_{1}^{\perp}(\tilde{A}_{1}) \rangle ,
\end{aligned}
\end{equation}
\begin{equation} \label{a.eq8}
\begin{aligned}
  \left.  \frac{d}{d\varepsilon }l_{4}\right|_{\varepsilon =0} &= \langle \nabla H_{3}(\tilde{A}_{1}),\left.\frac{d}{d\varepsilon } \tilde{A}_{1,\varepsilon }\right|_{\varepsilon=0} \rangle -\langle \nabla H_{3}(\tilde{A}_{2}),\left.\frac{d}{d\varepsilon } \tilde{A}_{2,\varepsilon }\right|_{\varepsilon=0}\rangle \\
  &= \tilde{\lambda_{1}}\langle \nabla H_{3}(\tilde{A}_{1}),\nabla \tilde{ \varphi }_{1}^{\perp}(\tilde{A}_{1}) \rangle-\tilde{\lambda_{2}} \langle \nabla H_{3}(\tilde{A}_{2}),\nabla  \tilde{\varphi}_{2}^{\perp}(\tilde{A}_{2}) \rangle ,
\end{aligned}
\end{equation}
\begin{equation} \label{a.eq9}
\begin{aligned}
  \left.  \frac{d}{d\varepsilon }l_{6}\right|_{\varepsilon =0} &= \langle \nabla H_{2}(\tilde{A}_{2}),\left.\frac{d}{d\varepsilon } \tilde{A}_{2,\varepsilon }\right|_{\varepsilon=0} \rangle -\langle \nabla H_{2}(A_{2}),\left.\frac{d}{d\varepsilon } A_{2,\varepsilon }\right|_{\varepsilon=0}\rangle \\
  &= \tilde{\lambda_{2}}\langle \nabla H_{2}(\tilde{A}_{2}),\nabla \tilde{ \varphi }_{2}^{\perp}(\tilde{A}_{2}) \rangle-\lambda_{2}\langle \nabla H_{2}(A_{2}),\nabla  \varphi _{2}^{\perp}(A_{2}) \rangle .
\end{aligned}
\end{equation}

From equations (\ref{a.eq3}), (\ref{a.eq6}) and (\ref{a.eq9}), it follows that
\begin{equation}
\begin{aligned}\label{a.eq10}
    \tilde{\lambda}_{2}=&\frac{1}{\langle\nabla H_{2}(\tilde{A}_{2}),\nabla \tilde{\varphi}_{2}^{\perp}(\tilde{A}_{2}) \rangle}\int_{L_{h}^{2}} g_{2}dx-f_{2}dy\\
      &+\frac{\langle \nabla H_{2}(A_{2}),\nabla \varphi_{2}^{\perp}(A_{2}) \rangle}{\langle\nabla H_{2}(\tilde{A}_{2}),\nabla \tilde{\varphi}_{2}^{\perp}(\tilde{A}_{2}) \rangle \langle\nabla H_{1}(A_{2}),\nabla \varphi_{2}^{\perp}(A_{2}) \rangle}\int_{L_{h}^{1}} g_{1}dx-f_{1}dy.
\end{aligned}
\end{equation}

From equations (\ref{a.eq2}), (\ref{a.eq8}) and (\ref{a.eq10}), it follows that 
\begin{equation}
\begin{aligned}\label{a.eq11}
\tilde{\lambda}_{1}=\frac{1}{\langle \nabla H_{3}(\tilde{A}_{1}),\nabla \tilde{\varphi}_{1}^{\perp}(\tilde{A}_{1})\rangle}\int_{L_{h}^{3}} g_{3}dx-f_{3}dy+\tilde{\lambda}_{2}\frac{\langle \nabla H_{3}(\tilde{A}_{2}),\nabla\tilde{\varphi}_{2}^{\perp}(\tilde{A}_{2})\rangle}{\langle \nabla H_{3}(\tilde{A}_{1}),\nabla \tilde{\varphi}_{1}^{\perp}(\tilde{A}_{1})\rangle}.
\end{aligned}
\end{equation}

From equations (\ref{a.eq1}), (\ref{a.eq7}) and (\ref{a.eq11}), it follows that 
\begin{equation}
\begin{aligned}\label{a.eq12}
\lambda_{1}=\frac{1}{\langle \nabla H_{2}(A_{1}),\nabla \varphi_{1}^{\perp }(A_{1}) \rangle}\int_{L_{h}^{4}} g_{2}dx-f_{2}dy+\tilde{\lambda}_{1}\frac{\langle \nabla H_{2}(\tilde{A}_{1}),\nabla \tilde{\varphi }_{1}^{\perp }(\tilde{A}_{1})\rangle}{\langle \nabla H_{2}(A_{1}),\nabla \varphi_{1}^{\perp }(A_{1})\rangle}.
\end{aligned}
\end{equation}

It remains to compute the derivatives of the terms $l_{1}, l_{3}, l_{5}, l_{7}$. Since $l_1=H_1(A_{1,\varepsilon})-H_2(A_{1,\varepsilon})$ and $\left.\frac{d}{d\varepsilon}A_{1,\varepsilon}\right|_{\varepsilon=0}=\lambda_1\nabla\varphi_1^\perp(A_1)$, we have
\begin{equation}
\begin{aligned}\label{a.eq13}
\left.\frac{d}{d\varepsilon}l_{1}\right|_{\varepsilon=0}
&=\big\langle \nabla H_{1}(A_{1})-\nabla H_{2}(A_{1}),\ \lambda_1\nabla\varphi_{1}^{\perp}(A_{1})\big\rangle\\
&=\lambda_{1}\Big(\langle \nabla H_{1}(A_{1}),\nabla \varphi_{1}^{\perp}(A_{1})\rangle-\langle \nabla H_{2}(A_{1}),\nabla \varphi_{1}^{\perp}(A_{1})\rangle\Big).
\end{aligned}
\end{equation}

Analogously, since $l_{3}=H_{2}(\tilde{A}_{1,\varepsilon})-H_{3}(\tilde{A}_{1,\varepsilon})$, $l_{5}=H_{3}(\tilde{A}_{2,\varepsilon})-H_{2}(\tilde{A}_{2,\varepsilon})$ and $l_{7}=H_{2}(A_{2,\varepsilon})-H_{1}(A_{2,\varepsilon})$, using $\left.\frac{d}{d\varepsilon}\tilde{A}_{1,\varepsilon}\right|_{0}=\tilde\lambda_1\nabla\tilde\varphi_1^\perp(\tilde A_1)$, $\left.\frac{d}{d\varepsilon}\tilde{A}_{2,\varepsilon}\right|_{0}=\tilde\lambda_2\nabla\tilde\varphi_2^\perp(\tilde A_2)$ and $\left.\frac{d}{d\varepsilon}A_{2,\varepsilon}\right|_{0}=\lambda_2\nabla\varphi_2^\perp(A_2)$, we obtain
\begin{equation}
\begin{aligned}\label{a.eq14}
\left.\frac{d}{d\varepsilon}l_{3}\right|_{\varepsilon=0}
=\tilde\lambda_{1}\Big(\langle \nabla H_{2}(\tilde A_{1}),\nabla \tilde\varphi_{1}^{\perp}(\tilde A_{1})\rangle-\langle \nabla H_{3}(\tilde A_{1}),\nabla \tilde\varphi_{1}^{\perp}(\tilde A_{1})\rangle\Big),
\end{aligned}
\end{equation}
\begin{equation}
\begin{aligned}\label{a.eq15}
\left.\frac{d}{d\varepsilon}l_{5}\right|_{\varepsilon=0}
=\tilde\lambda_{2}\Big(\langle \nabla H_{3}(\tilde A_{2}),\nabla \tilde\varphi_{2}^{\perp}(\tilde A_{2})\rangle-\langle \nabla H_{2}(\tilde A_{2}),\nabla \tilde\varphi_{2}^{\perp}(\tilde A_{2})\rangle\Big),
\end{aligned}
\end{equation}
\begin{equation}
\begin{aligned}\label{a.eq16}
\left.\frac{d}{d\varepsilon}l_{7}\right|_{\varepsilon=0}
=\lambda_{2}\Big(\langle \nabla H_{2}(A_{2}),\nabla \varphi_{2}^{\perp}(A_{2})\rangle-\langle \nabla H_{1}(A_{2}),\nabla \varphi_{2}^{\perp}(A_{2})\rangle\Big).
\end{aligned}
\end{equation}

We now assemble $M(h)=\left.\dfrac{d}{d\varepsilon}(l_1+l_2+\cdots+l_8)\right|_{\varepsilon=0}$. Adding the pairs  $(l_1,l_2)$, $(l_3,l_4)$, $(l_5,l_6)$ and $(l_7,l_8)$, using (\ref{a.eq13})--(\ref{a.eq16}) together with (\ref{a.eq5}) and (\ref{a.eq7})--(\ref{a.eq9}), the terms within each pair cancel partially and we get
\begin{align*}
\left.\frac{d}{d\varepsilon}(l_1+l_2)\right|_{0} &= \lambda_1\langle\nabla H_1(A_1),\nabla\varphi_1^\perp(A_1)\rangle-\tilde\lambda_1\langle\nabla H_2(\tilde A_1),\nabla\tilde\varphi_1^\perp(\tilde A_1)\rangle,\\
\left.\frac{d}{d\varepsilon}(l_3+l_4)\right|_{0} &= \tilde\lambda_1\langle\nabla H_2(\tilde A_1),\nabla\tilde\varphi_1^\perp(\tilde A_1)\rangle-\tilde\lambda_2\langle\nabla H_3(\tilde A_2),\nabla\tilde\varphi_2^\perp(\tilde A_2)\rangle,\\
\left.\frac{d}{d\varepsilon}(l_5+l_6)\right|_{0} &= \tilde\lambda_2\langle\nabla H_3(\tilde A_2),\nabla\tilde\varphi_2^\perp(\tilde A_2)\rangle-\lambda_2\langle\nabla H_2(A_2),\nabla\varphi_2^\perp(A_2)\rangle,\\
\left.\frac{d}{d\varepsilon}(l_7+l_8)\right|_{0} &= \lambda_2\langle\nabla H_2(A_2),\nabla\varphi_2^\perp(A_2)\rangle.
\end{align*}

Adding these four lines, every intermediate term cancels and we are left with
\begin{equation}
M(h)=\lambda_1\langle \nabla H_1(A_1),\nabla\varphi_1^\perp(A_1)\rangle. \label{a.eq17}
\end{equation}

This also follows at once from the definition of the displacement function. Indeed, since $H_1(A_1(h))$ does not depend on $\varepsilon$, we have $M(h)=\left.\frac{d}{d\varepsilon}H_1(A_{1,\varepsilon}(h))\right|_0=\langle\nabla H_1(A_1),\lambda_1\nabla\varphi_1^\perp(A_1)\rangle$, which is a useful check on (\ref{a.eq17}).
Finally, substituting (\ref{a.eq12}) into (\ref{a.eq17}), then (\ref{a.eq11}) into the result, then (\ref{a.eq10})  and then (\ref{a.eq6}), and collecting the coefficients of $\int_{L_h^4}g_2dx-f_2dy$, $\int_{L_h^3}g_3dx-f_3dy$, $\int_{L_h^2}g_2dx-f_2dy$ and $\int_{L_h^1}g_1dx-f_1dy$, we obtain
\begin{equation}
\begin{aligned}
M(h) &= \Psi_{1}(h) \int_{L_{h}^{4}} (g_{2}\,dx - f_{2}\,dy)
+ \Psi_{2}(h) \int_{L_{h}^{3}} (g_{3}\,dx - f_{3}\,dy) \\
&\quad + \Psi_{3}(h) \int_{L_{h}^{2}} (g_{2}\,dx - f_{2}\,dy)
+ \Psi_{4}(h) \int_{L_{h}^{1}} (g_{1}\,dx - f_{1}\,dy),
\end{aligned}
\end{equation}
with $\Psi_{1}, \Psi_{2}, \Psi_{3}, \Psi_{4}$ exactly as in the statement of Theorem \ref{teomelnikov}. To conclude the existence of a unique limit cycle bifurcating from $L_{h_{0}}$, write $\Delta(h,\varepsilon )=H_{1}(A_{1,\varepsilon}(h))-H_{1}(A_{1}(h))$ for the displacement function, so that  $\Delta(h,\varepsilon )=\varepsilon M(h)+O(\varepsilon^2)$ by (\ref{eq.a1}). For $\varepsilon \neq 0$ define the normalized displacement $F(h,\varepsilon )=\Delta(h,\varepsilon )/\varepsilon= M(h)+O(\varepsilon)$, and, by continuity, define $F(h,0)=M(h)$. If $h_{0}$ is a simple zero of $M$, i.e.,  $M(h_{0})=0$ and $M'(h_{0})\neq0$, then $F(h_{0},0)=0$ and $\partial F/\partial h(h_{0},0)=M'(h_{0})\neq0$, so the Implicit Function Theorem applied to $F$ in the variable $h$, at the point $(h_{0},0)$, yields a unique $C^{1}$ function $h=h(\varepsilon )$, defined for $|\varepsilon|$ small, with $h(0)=h_{0}$ and $F(h(\varepsilon),\varepsilon )\equiv 0$, equivalently $\Delta(h(\varepsilon ),\varepsilon)=0$, so that for each sufficiently small $\varepsilon\neq 0$ the closed curve $L_{h({\varepsilon})}$ gives rise to a limit cycle of the perturbed system, bifurcating from $L_{h_{0}}$.

\section{Wronskian computations }\label{app.w}
This appendix collects the explicit Wronskians $W_0,\ldots,W_5$ (each expressed in terms of an explicit polynomial $p_i(r)$ or $q_i(r)$, though the Wronskians themselves are generally not polynomials --- most involve a square root) together with the corresponding zero-location analysis, for each ECT-system verification invoked in Sections \ref{sec3}--\ref{sec5} that relies on a full symbolic computation of $W_0,\ldots,W_5$ as functions of $r$. 

\subsection*{Case 1 (Theorem \ref{teoa}, statement (a))}
\begin{eqnarray*}
    W_{0}(r)&=&r\sin (\theta_{r})=r\sqrt{-\frac{p_{1}(r)}{16r^{2}}}\\
    W_{1}(r)&=&-\frac{1}{8}rp_{1}(r)\\
    W_{2}(r)&=&-\frac{1}{8}r^{4}p_{1}(r)\sqrt{-\frac{p_{1}(r)}{r^{2}}}\\
    W_{3}(r)&=&\frac{3}{8}r^{6}(p_{1}(r))^{2}\\
     W_{4}(r)&=&\dfrac{720r^{10}p_{2}(r)}{p_{1}(r)^{2}
     }\\
     W_{5}(r)&=&\dfrac{69120r^{6}p_{3}(r)}{(p_{1}(r))^{2}\sqrt{\frac{-p_{1}(r)}{r^{2}}}}
\end{eqnarray*}
where
\begin{eqnarray*}
    p_{1}(r)&=&9-10r^{2}+r^{4}\\
    p_{2}(r)&=&441-284r^{2}+78r^{4}-12r^{6}+r^{8}\\
    p_{3}(r)&=&-1323+891r^{2}-270r^{4}+54r^{6}-9r^{8}+r^{10}.
\end{eqnarray*}
On $(1,3)$: the zeros of $p_{1}(r)$ are $r=\pm1$ and $r=\pm3$, so $p_1$ (hence $W_1,W_2,W_3$) is nonzero on the open interval $(1,3)$. Writing $s=r^{2}$,  $p_{2}(r)=Q_{2}(s)=s^{4}-12s^{3}+78s^{2}-284s+441=(s^{2}-6s+14)^{2}+14(s-\frac{29}{7})^{2}+\frac{33}{7}\geq \frac{33}{7}>0$, so $p_{2} $ has no real zeros and $W_{4}$ is nonzero throughout $(1,3)$. For $p_3(r)=Q_3(s)=s^{5}-9s^{4}+54s^{3}-270s^{2}+891s-1323$, $Q_3'(s)=\frac{1}{5}\left[\left(5s^{2}-18s\right)^{2}+6\left(9s-25\right)^{2}\right]+141\geq141>0$, so $Q_{3}$ is strictly increasing on $\mathbb{R}$. Since $Q_3\!\left(\frac{18}{5}\right)=-\frac{6777}{3125}<0<\frac{1935667}{100000}=Q_3\!\left(\frac{37}{10}\right)$, the IVT gives its unique real zero at $s\approx3.61045$, so $p_{3}$ has exactly two real zeros $r\approx \pm 1.90012$, only the positive one lying in $(1,3)$. Hence $W_{0},W_{1},W_{2},W_{3},W_{4}$ are all nonzero on $(1,3)$, while $W_5$ has the single simple zero $r\approx1.90012$ described above.

\subsection*{Case 2 (Theorem \ref{teob}, statement (a))}
\begin{eqnarray*}
    W_{0}(r)&=&r\sin (\theta_{r}^{1})=\frac{r}{4}\sqrt{-\frac{q_{1}(r)}{r^{2}}}\\
    W_{1}(r)&=&-\frac{r}{8}q_{1}(r)\\
    W_{2}(r)&=&-\frac{1}{8}r^{4}q_{1}(r)\sqrt{\frac{-q_{1}(r)}{r^{2}}}\\
    W_{3}(r)&=&\frac{3}{8}r^{6}(q_{1}(r))^{2}\\
     W_{4}(r)&=&\frac{288r^{10}q_{2}(r)}{q_{1}(r)^{2}}\\
     W_{5}(r)&=&\dfrac{27648r^{6}q_{3}(r)}{(q_{1}(r))^{2}\sqrt{\frac{-q_{1}(r)}{r^{2}}}}
\end{eqnarray*}
where
\begin{eqnarray*}
    q_{1}(r)&=&r^{4}-12r^{2}+4,\\
    q_{2}(r)&=&1008-768r^{2}+216r^{4}-32r^{6}+3r^{8},\\
    q_{3}(r)&=&-2016+1040r^{2}-240r^{4}+40r^{6}-10r^{8}+3r^{10}.
\end{eqnarray*}
The zeros of $q_1(r)$ are $r=\pm\sqrt{2(3-2\sqrt2)}\approx\pm0.58579$ and $r=\pm\sqrt{2(3+2\sqrt2)}\approx\pm3.41421$, so $q_1$ (hence $W_1,W_2,W_3$) is nonzero on each of the open intervals $\left(\sqrt{2(3-2\sqrt2)},1\right)$ and $\left(3,\sqrt{2(3+2\sqrt2)}\right)$.
Writing $s=r^{2}$,  $q_2(r)=Q_2(s)=3s^{4}-32s^{3}+216s^{2}-768s+1008=\frac{1}{3}\left[\left(3s^{2}-16s+12\right)^{2}+320\left(s-3\right)^{2}\right]>0$ for all $s\in \mathbb{R}$, so $q_{2}$ has no real zero and $W_{4}$ is nonzero on both intervals. For $q_3(r)=Q_3(s)=3s^{5}-10s^{4}+40s^{3}-240s^{2}+1040s-2016$, $Q_3'(s)=\frac{5}{3}\left(3s^{2}-4s\right)^{2}+\frac{40}{21}\left(7s-18\right)^{2}+\frac{2960}{7}\geq\frac{2960}{7}>0$, so $Q_{3}$ is strictly increasing on $\mathbb{R}$. Since  $Q_3(3)=-57<0<\frac{2859353}{100000}=Q_3\!\left(\frac{31}{10}\right)$, the IVT gives its unique real zero at $s\approx 3.0678$, so $q_3(r)$ has zeros at $r\approx\pm1.75148$, neither of which lies within these two intervals, so $W_5$ is likewise nonzero there. Hence all six Wronskians $W_{0},\ldots,W_{5}$ are nonzero on each of these two intervals.

\subsection*{Case 3 (Theorem \ref{teoc}, statement (a))}
\begin{eqnarray*}
    W_{0}(r)&=&r\sin (\theta_{r}^{4})=r\sqrt{\frac{p_{1}(r)}{20r^{2}}}\\
    W_{1}(r)&=&\frac{1}{10}rp_{1}(r)\\
    W_{2}(r)&=&\frac{2}{25}r^{4}p_{1}(r)\sqrt{\frac{p_{1}(r)}{r^{2}}}\\
    W_{3}(r)&=&\frac{48}{125\sqrt{5}}r^{6}(-p_{1}(r))^{2}\\
     W_{4}(r)&=&\dfrac{36864r^{10}p_{2}(r)}{125\sqrt{125}(-p_{1}(r))^{2}
     }\\
     W_{5}(r)&=&\dfrac{3538944}{125}\dfrac{r^{6}p_{3}(r)}{(-p_{1}(r))^{2}\sqrt{\frac{5p_{1}(r)}{r^{2}}}}
\end{eqnarray*}
where
\begin{eqnarray*}
    p_{1}(r)&=&-16+12r^{2}-r^{4}\\
    p_{2}(r)&=&15840-8385r^{2}+1890r^{4}-235r^{6}+15r^{8}\\
    p_{3}(r)&=&-12672+7360r^{2}-1920r^{4}+320r^{6}-40r^{8}+3r^{10}.
\end{eqnarray*}
Observe that the zeros of $p_{1}(r)$ are $r=\pm\sqrt{6-2\sqrt5}$ and $r=\pm\sqrt{6+2\sqrt5}$, none of which belong to the open interval $\left(3,\sqrt{2(3+\sqrt{5})}\right)$. Writing $s=r^{2}$, $p_2(r)=Q_2(s)=15s^{4}-235s^{3}+1890s^{2}-8385s+15840$, its Sturm sequence has sign pattern $(+,-,-,+,+)$ at $s=0$ and $(+,+,-,-,+)$ as $s\to +\infty $, two sign changes each, so $Q_{2}$ has $2-2$ roots in $(0,\infty)$, i.e., 
 $p_{2}(r)$ does not vanish on this interval. Consequently, the functions $W_{0}(r),W_{1}(r),W_{2}(r),W_{3}(r)$, and $W_{4}(r)$ are nonzero on this interval. Similarly, $p_3(r)=Q_3(s)=3s^{5}-40s^{4}+320s^{3}-1920s^{2}+7360s-12672$, with $Q_3'(s)=\frac{5}{3}\left(3s^{2}-16s\right)^{2}+\frac{64}{3}\left(5s-18\right)^{2}+448\geq448>0$, so $Q_{3}$ is strictly increasing on $\mathbb{R}$. Since $Q_3\!\left(\frac{23}{5}\right)=-\frac{83171}{3125}<0<\frac{9218621}{100000}=Q_3\!\left(\frac{47}{10}\right)$, the IVT gives its unique real zero at $s\approx 4.6229$, so $p_{3}$
 has two simple zeros $r\approx \pm 2.1501$, neither of which lies within this interval. Hence all six Wronskians $W_{0},\ldots,W_{5}$ are nonzero on $\left(3,\sqrt{2(3+\sqrt{5})}\right)$.

\section*{Acknowledgments}
The first author is supported by CAPES grant 88887.950547/2024-00. The second author is partially supported by FAPESP grant 2023/02959-5 and 2024/15612-6, CNPq grant
302154/2022-1, and ANR-23-CE40-0028.


\begin{thebibliography}{99}

\bibitem{refb10}
S.R. Alva, A.C. Mereu, Crossing limit cycles from discontinuous piecewise linear differential centers separated by two circles, \textit{Int. J. Bifurcation Chaos} \textbf{35} (2025) 2550090.

\bibitem{refb11}
M.E. Anacleto, J. Llibre, C. Valls, C. Vidal, Limit cycles of discontinuous piecewise differential systems formed by linear centers in 
$\mathbb{R}^{2}$
 and separated by two circles, \textit{Nonlinear Anal. Real World Appl.} \textbf{60} (2021) 103291.

\bibitem{refb18}
M.E. Anacleto, C. Vidal, Limit cycles bifurcating of discontinuous and continuous piecewise differential systems of isochronous cubic centers with three zones, \textit{Qual. Theory Dyn. Syst.} \textbf{23} (2024) 173. https://doi.org/10.1007/s12346-024-01030-y.

\bibitem{refb12}
M. di Bernardo, C. Budd, A.R. Champneys, P. Kowalczyk, \textit{Piecewise-smooth Dynamical Systems: Theory and Applications}, Appl. Math. Sci., vol. 163, Springer, London, 2008.

\bibitem{refb19} C. A. Buzzi, C. Pessoa and J. Torregrosa, \emph{Piecewise linear perturbations of a linear center}, Discrete Contin. Dyn. Syst. \textbf{33} (2013), no. 9, 3915--3936.

\bibitem{ref4}
B. Coll, A. Gasull, R. Prohens, Bifurcation of limit cycles from two families of centers, \textit{Dyn. Contin. Discrete Impuls. Syst. Ser. A Math. Anal.} \textbf{12} (2) (2005) 275--287.

\bibitem{refb9}
C. V. Das Neves Silva, P. R. da Silva, On the Piecewise Holomorphic Systems with Three Zones, \textit{J. Nonlinear Sci.} \textbf{36} (2026) 1--42.

\bibitem{refb13}
A.F. Filippov, \textit{Differential Equations with Discontinuous Right-Hand Sides}, Mathematics and its Applications (Soviet Series), vol. 18, Kluwer Academic Publishers, Dordrecht, 1988.

\bibitem{refb2}
A. Garijo, A. Gasull, X. Jarque, Local and global phase portrait of equation 
$\dot{z}=f(z)$, \textit{Discrete Contin. Dyn. Syst.} \textbf{17} (2) (2007) 309--329.

\bibitem{refb6} Gasull, A., Rondón, G., Da Silva, P. R., \emph{On the number of limit cycles for piecewise polynomial holomorphic systems}, SIAM Journal on Applied Dynamical Systems, v. 23, p. 2593-2622, 2024.

\bibitem{refb1}
A. Gasull, G. Rondón, P.R. da Silva, Simultaneous bifurcation of limit cycles for piecewise holomorphic systems, \textit{J. Dyn. Differ. Equ.}, to appear (2025). https://doi.org/10.1007/s10884-025-10435-5.

\bibitem{refb5} Gouveia, L. F. S., Rondón, G., Da Silva, P. R., \emph{Piecewise holomorphic systems}, Journal of Differential Equations, v. 332, p. 440-472, 2022.

\bibitem{refb14}
M. Han, W. Chen, Melnikov function method and limit cycle bifurcations, \textit{J. Differ. Equ.} \textbf{157} (2) (1999) 287--321.

\bibitem{refb15}
Y. Ilyashenko, Centennial history of Hilbert's 16th problem, \textit{Bull. Amer. Math. Soc.} \textbf{39} (3) (2002) 301--354.

\bibitem{refb20} J. Llibre and M. A. Teixeira, \emph{Limit cycles in Filippov systems having a circle as switching manifold}, Chaos \textbf{32} (2022), no. 5, 053106.

\bibitem{refb21} J. Llibre and C. Valls, \emph{Crossing limit cycles for discontinuous piecewise differential systems formed by linear Hamiltonian saddles or linear centers separated by a conic}, Chaos, Solitons \& Fractals \textbf{159} (2022), 112076.

\bibitem{ref3}
D.D. Novaes, J. Torregrosa, On extended Chebyshev systems with positive accuracy, \textit{J. Math. Anal. Appl.} \textbf{448} (1) (2017) 171--186.

\bibitem{refb7} C. Pessoa and R. Ribeiro, \emph{Bifurcation of limit cycles from a periodic annulus formed by a center and two saddles in piecewise linear differential system with three zones}, Nonlinear Anal. Real World Appl. \textbf{80} (2024), 104153.

\bibitem{refb8}
O. Ramirez, A.M. Alves, Bifurcation of limit cycles by perturbing piecewise non-Hamiltonian systems with nonlinear switching manifold, \textit{Nonlinear Anal. Real World Appl.} \textbf{57} (2021).

\bibitem{refb17} R.M. Ribeiro, \emph{Persistence of periodic orbits from planar piecewise linear Hamiltonian differential systems with two or three zones}, Ph.D. Thesis 2023.

\bibitem{refb22} G. Rondón et al., \emph{Limit Cycles in Piecewise Smooth Systems with Circular Switching Manifold}, arXiv:2604.26061 (2026).

\bibitem{refb16}
Y. Xiong, C. Wang, Limit cycle bifurcations of planar piecewise differential systems with three zones, \textit{Nonlinear Anal. Real World Appl.} \textbf{61} (2021) 103333.

\end{thebibliography}
\end{document}